\documentclass[11pt]{amsart}
\usepackage{amsthm}

\usepackage[all]{xy}
\usepackage{amssymb}
\usepackage{enumerate}
\usepackage{mathrsfs}
\usepackage{epsfig}
\usepackage{graphicx}
\usepackage{subfig}
\usepackage{float}
\usepackage{epigraph}


\evensidemargin0cm \oddsidemargin0cm
 \textwidth15.8cm

\numberwithin{equation}{section}

\newtheorem{thm}{Theorem}[section]
\newtheorem{cor}[thm]{Corollary}
\newtheorem{lem}[thm]{Lemma}
\newtheorem{prop}[thm]{Proposition}
\newtheorem{rem}{Remark}[section]
\newtheorem{example}[thm]{Example}

\newcommand{\eq}[1]{(\ref{#1})}

\renewcommand{\Re}{\operatorname{\rm Re}}
\renewcommand{\Im}{\operatorname{\rm Im}}

\newcommand{\beqast}{\begin{eqnarray*}}
\newcommand{\eqast}{\end{eqnarray*}}
\newcommand{\beqa}{\begin{eqnarray}}
\newcommand{\eqa}{\end{eqnarray}}

\newcommand{\bbe}{\begin{equation}}
\newcommand{\ee}{\end{equation}}

\renewcommand{\Re}{\operatorname{\rm Re}}
\renewcommand{\Im}{\operatorname{\rm Im}}

\newcommand{\bC}{{\mathbb C}}
\newcommand{\bE}{{\mathbb E}}
\newcommand{\bN}{{\mathbb N}}
\newcommand{\bP}{{\mathbb P}}
\newcommand{\bQ}{{\mathbb Q}}

\newcommand{\bR}{{\mathbb R}}

\newcommand{\bZ}{{\mathbb Z}}

\newcommand{\cF}{{\mathcal F}}

\newcommand{\cE}{{\mathcal E}}

\newcommand{\cL}{{\mathcal L}}

\newcommand{\cC}{{\mathcal C}}

\newcommand{\barX}{{\bar X}}
\newcommand{\uX}{{\underline X}}

\newcommand{\cEq}{{\mathcal E_q}}
\newcommand{\cEpq}{{\mathcal E^+_q}}
\newcommand{\cEmq}{{\mathcal E^-_q}}

\newcommand{\phipq}{{\phi^+_q}}
\newcommand{\phimq}{{\phi^-_q}}

\newcommand{\tV}{{\tilde V}}
\newcommand{\tW}{{\tilde W}}

\newcommand{\hf}{{\hat f}}
\newcommand{\hg}{{\hat g}}

\newcommand{\Om}{{\Omega}}

\newcommand{\al}{\alpha}
\newcommand{\alp}{\alpha_+}
\newcommand{\alm}{\alpha_-}
\newcommand{\be}{\beta}
\newcommand{\De}{\Delta}
\newcommand{\de}{\delta}
\newcommand{\eps}{\epsilon}

\newcommand{\la}{\lambda}

\newcommand{\mumpr}{\mu'_-}
\newcommand{\muppr}{\mu'_+}

\newcommand{\sg}{\sigma}

\newcommand{\om}{\omega}
\newcommand{\omm}{\om_-}
\newcommand{\omp}{\om_+}

\newcommand{\ze}{\zeta}

\newcommand{\ga}{\gamma}
\newcommand{\gap}{\gamma_+}
\newcommand{\gam}{\gamma_-}

\newcommand{\Ga}{\Gamma}

\newcommand{\bfo}{{\bf 1}}

\newcommand{\cp}{{c_+}}
\newcommand{\cm}{{c_-}}

\newcommand{\Cp}{{C_+}}
\newcommand{\Cm}{{C_-}}
\newcommand{\supp}{{\mathrm{supp}}}

\begin{document}

\title[Expectations of functions of a stable L\'evy process and its extremum]
{Efficient evaluation  of expectations of functions of a stable L\'evy process and its extremum}

\author[
Svetlana Boyarchenko and
Sergei Levendorski\u{i}]
{
Svetlana Boyarchenko and
Sergei Levendorski\u{i}}

\begin{abstract}
Integral representations for expectations of functions of a stable L\'evy process $X$ and its supremum $\bar X$ are derived.  
As examples, cumulative probability distribution functions (cpdf) of $X_T, \barX_T$, the joint cpdf of $X_T$ and $\barX_T$, and the expectation of $(\be X_T-\barX_T)_+$, $\be>1$, are considered, and efficient numerical    procedures for cpdfs are developed. The most efficient numerical methods use the conformal acceleration technique and simplified trapezoid rule. 

\end{abstract}

\thanks{
\emph{S.B.:} Department of Economics, The
University of Texas at Austin, 2225 Speedway Stop C3100, Austin,
TX 78712--0301, {\tt sboyarch@utexas.edu} \\
\emph{S.L.:}
Calico Science Consulting. Austin, TX.
 Email address: {\tt
levendorskii@gmail.com}}

\maketitle

\noindent
{\sc Key words:} stable L\'evy processes, extrema of a stable L\'evy process, fractional partial differential equations, Fourier transform,  Gaver-Wynn Rho algorithm, sinh-acceleration, conformal acceleration technique

\noindent
{\sc MSC2020 codes:} 26A33, 33C48, 35R11, 65M70, 65T99, 65G70, 60-08, 60G52, 42A38, 42B10, 44A10,65R10,65G51,91G20,91G60

\tableofcontents

\section{Introduction}\label{s:intro}
Stable L\'evy processes and densities appear in various fields of natural sciences, engineering and finance.
 See, e.g., 
\cite{Chandrasekhar,Chavanis09,Mand1,Mand2,Zolotarev,SamorodnitskyTaqqu94,Weron96,MitRachev,Nolan97,Nolan98,Nolan03,NikiasShao95,SignalProc10,Saenko16,AmentONeil18} and the bibliographies therein. Efficient calculation of (cumulative) probability distribution functions  (cpdf and pdf) and expectations in stable L\'evy models are non-trivial even in the one-dimensional case (1D case).  The slow decay of the transition kernels at infinity and very large derivatives of the kernels near the peak make accurate calculations in the state space difficult. If the Fourier transform technique is used, standard numerical quadratures face serious difficulties because of  the oscillation and slow decay of the integrands at infinity. In particular,
standard tools such as the fast Fourier transform produce very large errors unless the index $\al$ of the stable L\'evy process is  close to 2, hence, the process is close to the Brownian motion (BM).  
The methods in op.cit. are very difficult to generalize to subordinated stable L\'evy processes and mixtures of stable distributions, 
which arise in applications to signal processing  \cite{SignalProc10}. See \cite{ConfAccelerationStable} for details.
 Generalizations to distributions of extrema of a L\'evy processes
and joint distributions, which we consider
in the present paper, is essentially impossible. It is necessary to evaluate
  double and triple integrals, hence, the detailed analysis of the integrands used in the 1D case does not seem feasible. 
  
In the present paper, as in \cite{ConfAccelerationStable} and  \cite{EfficientLevyExtremum}, we use the conformal deformation technique. 
The conformal deformations in  \cite{EfficientLevyExtremum} are applicable to L\'evy processes with the characteristic exponents admitting analytic continuation to a strip; in  \cite{ConfAccelerationStable},  variations of the conformal deformation technique suitable  for evaluation pdf and cpdf of stable
L\'evy distribution on $\bR$ are developed.  The new elements of the present
paper are efficient methods for evaluation of the Wiener-Hopf factors and 2D-3D integrals in the formula for
expectations  $V(f,T;x_1,x_2)=\bE[f(x_1+X_T, \max\{x_2, x_1+\barX_T\})]$ of functions of a stable L\'evy process  $X$  and its  
supremum process $\barX$ both starting at zero. The generalization to the case when the infimum process replaces supremum process is by symmetry. The formula is derived in \cite{EfficientLevyExtremum} for any L\'evy process; numerical realizations have to be modified for the case of stable L\'evy processes.
In the nutshell, the main idea is to deform the contours  of integration using  appropriate conformal maps, which must be in a certain agreement. The corresponding changes of variables and simplified trapezoid rule allow us to satisfy a small error tolerance, at a small CPU cost.   The efficiency of the simplified
trapezoid rule  stems from the fact that the discretization error of the  infinite trapezoid rule is  $O(\exp[-2\pi d/\ze])$, where $d$ is the half-width of the strip of analyticity around the line of integration and $\ze$ is the step size. As in \cite{ConfAccelerationStable}, certain
preliminary regularizations of the integrals are needed. In many cases, the method is faster and more accurate than the saddle point method and methods based on the reduction to an appropriate cut in the complex plane (see, e.g., \cite{Fedoryuk,NG-MBS,IAC}).
The schemes that we design use  the general analytical properties of the characteristic exponent and can be easily modified to more involved cases of mixtures of stable distributions and subordinated
stable L\'evy processes, and  to solution of several types of boundary problems for fractional 
differential equations; generalizations to higher dimensions are possible. 

Special cases which we consider in more detail in the paper, are: (1) $f$ depends on the first argument only; (2) $f(x_1,x_2)=\bfo_{(\infty,a]}(x_2)$;
(3) $f(x_1, x_2)=\bfo_{(-\infty, a_1]}(x_1)\bfo_{(-\infty, a_2]}(x_2)$, where $x_1\le x_2\le a_2, a_1\le a_2$; (4)
$f(x_1,x_2)=(\be x_1-x_2)_+e^{-\la x_2}$, where $\be>1$ and $\la>0$. 
Case (1), which we consider in Section \ref{s:1D}, is a straightforward generalization of the case 
of the cpdf of a stable L\'evy process considered in \cite{ConfAccelerationStable}. Numerical schemes outlined for this case are used as  blocks in
more complicated situations. 
First, as in \cite{ConfAccelerationStable}, we calculate
the expectation $\bE[f(x+X_T)]$ using the Fourier transform technique. 
We reduce the integral in the Fourier inversion formula to an integral over
over a half-axis, rotate the half-axis to $e^{i\om}\bR_+$, where the choice of $\om\in (-\pi/2,\pi/2)$ is determined by  the domains of analyticity and oscillation of the characteristic function of the process and Fourier transform $\hf$. Then we make the change of variables $\xi=e^{i\om+y}$ and apply the simplified trapezoid rule. The exponential rate of decay of the error of the infinite trapezoid rule as a function of $1/\ze$ allows one to easily satisfy a small error tolerance unless $d$, the half-width of the strip of analyticity, is very small. The $d$ is very small in the case of strongly 
 asymmetric L\'evy processes of index $\al\in (0,2), \al\neq 1$; for asymmetric L\'evy processes of index 1, the rotation is impossible for
  $x$ in a certain semi-infinite interval unless high precision arithmetic is used. To alleviate these difficulties, in \cite{ConfAccelerationStable}, we used two additional families of conformal deformations; the CPU time significantly increased. In the present paper, we calculate of
$\bE[f(x+X_T)]$ using the Laplace transform w.r.t. $T$ and Fourier transform w.r.t. $x$. The resulting 2D integral is calculated using either the Gaver-Wynn Rho algorithm (GWR algorithm) for the Laplace inversion or the conformal deformationof the line of integration in the Bromwich integral  used in \cite{Contrarian,EfficientLevyExtremum}.  The deformation of the same type, namely, the sinh-acceleration, was introduced in \cite{Sinh} in the context of evaluation of special functions and used in \cite{SINHregular} to price European options in wide classes of L\'evy and affine models. The inner integral (the inverse Fourier transform) can be calculated using the rotation of the half-axis of integration and the exponential change of variable $\xi=e^{i\om+y}$. The half-width of the strip of analyticity $d=\pi/8$ in the new coordinate $y$ is not small, hence, the number of terms in the simplified trapezoid rule for the inner integral is small,
which compensates for the drawback of calculations for several dozen of points in the Laplace inversion formula. 


Case (2) is the probability distribution of the supremum process. We calculate the Laplace transform w.r.t. $T$ using
the Wiener-Hopf factorization technique, and evaluate the Bromwich integral. The result is a 2D integral, the integrand 
being expressed in terms of the Wiener-Hopf factors. An efficient numerical procedure uses the conformal deformations
in the formula for the Wiener-Hopf factor and in the 2D integral; all deformations must be in a certain agreement.
A simpler but less accurate version uses GWR algorithm to evaluate the Bromwich integral.
The elements of the Wiener-Hopf factorization technique used in the paper are in Section \ref{ss:WHF_gen}.
In particular, we introduce a decomposition of the Wiener-Hopf factors, which is necessary for  efficient realizations of the
general formula for $V(f,T;x_1,x_2)$. The formula is derived in \cite{EfficientLevyExtremum}, for any L\'evy process, and
an efficient numerical realization is developed for L\'evy processes with exponentially decaying tails of the L\'evy density.
The decomposition allows us to repeat the constructions in \cite{EfficientLevyExtremum} almost verbatim.
The main difference is that in \cite{EfficientLevyExtremum}, the sinh-changes of variables on $\bR$, of the form
$\xi=i\om_1+b\sinh(i\om+y)$,
where $\om_1\in \bR$, $b>0$ and $\om\in (0,\pi/2)$ are used; in the present paper, 
the exponential changes of variables on rays $e^{i\om}\bR_+$ are used: $\xi=e^{i\om +y}$. 
As it was demonstrated in  \cite{ConfAccelerationStable}, typically, the sinh-change of variables and exponential change of variables are the most efficient ones but, in some cases, less efficient families are more efficient or even indispensable. 

Case (3) is the joint cpdf of the stable L\'evy process and its extremum. If GWR algorithm is used, then, for each $q>0$ in GWR, we need
to evaluate 2D integral; if the sinh-acceleration is applied to the Browmwich integral, then 3D integral. The latter version allows us to
achieve much higher precision than the former but it is possible only if the index $\al>1$ or $\al=1$ and the jump part is symmetric or $\al\in (0,1)$ and $\mu=0$. Note that in Case (3), $V(f,T;x_1,x_2)$ is independent of $x_2$.
In Case (4), $V(f,T;x_1,x_2)$ depends on both $x_1$ and $x_2$. Furthermore, Case (4) is used to illustrate how the general theorem formulated and proved for bounded measurable functions can be used
when $f$ is unbounded: introduce the dampening factor, derive the integral representation and prove that it is possible to pass to the limit $\la\to 0$. In Case (4), the limit is finite iff $\al>1$. 

The integral representations are derived in Section \ref{int_repr}. In Section \ref{numer_real}, efficient numerical realizations are 
discussed, the detailed algorithms for Case (3) are formulated, and  numerical examples are  produced.
Section \ref{concl} concludes; technical details and tables with numerical examples are relegated to appendices \ref{s:techn} and
\ref{s:tables}, respectively.

\section{Evaluation of expectations $E[f(x+X_T)]$. Conformal deformations technique}\label{s:1D}

\subsection{Stable L\'evy processes: main notation}\label{ss:stable_notation}
Let $X$ be a one-dimensional stable L\'evy process on the filtered probability space $(\Om, \cF, \{\cF_t\}_{t\ge 0}, \bP)$
satisfying the usual conditions, and let $\bE$ be the expectation operator under $\bP$.
We use the same parametrization of stable L\'evy processes on $\bR$ and representations of the characteristic exponent as in \cite{ConfAccelerationStable}. 
Let $c_\pm\ge 0$, $c_++c_->0$, $\al\in (0,2)$, and let $X$ be a stable L\'evy process of index $\al$, with the L\'evy density 
\[
F(dx)=c_+ x^{-\al-1}\bfo_{(0,+\infty)}(x)dx+c_-|x|^{-\al-1}\bfo_{(-\infty,0)}(x)dx.
\]
If $\al\neq 1$,  the characteristic exponent is of the form
\bbe\label{stdr}
\psi_{st}(\xi)=-i\mu\xi+\psi^0_{st}(\xi),
\ee
where $\mu\in\bR$,
\beqa\label{SLnuneq10}
\psi^0_{st}(\xi)&=&\Cp |\xi|^\al\bfo_{(0,+\infty)}(\xi)+ \Cm|\xi|^\al\bfo_{(-\infty,0)}(\xi),\ \xi\in \bR,
\\\label{Cp}
\Cp&=&\Cp(\al,\cp,\cm)=-\cp\Ga(-\al)e^{-i\pi\al/2}-\cm\Ga(-\al)e^{i\pi\al/2},
\eqa
and $\Cm=\overline{\Cp}$. 
If $\al=1$, then the formula for $\psi^0_{st}$ is more involved:
  \bbe\label{SLnu1}
\psi^0_{st}(\xi)=\sg|\xi|(1+i(2\be/\pi)\,\mathrm{sign}\,\xi\ln|\xi|),\ \xi\in \bR,
\ee
where $\sg=(\cp+\cm)\pi/2$, $\be=(\cp-\cm)/(\cp+\cm)$. This is a version of Zolotarev's  parametrizations \cite{Zolotarev}
for stable processes of index 1.  In the symmetric case $c=c_+=c_-$, \eq{SLnu1} reduces to 
  \eq{SLnuneq10} with $\Cp=\Cm=c\pi$ and $\al=1$. 
\begin{prop}\label{prop:anal_psi_st} 
\begin{enumerate}[(a)]
\item
$\psi^0_{st}(\xi)$ admits analytic continuation from $(0,+\infty)$ to the right half-plane (and to an appropriate Riemann surface).
\item
$\psi^0_{st}(\xi)$ admits analytic continuation from $(-\infty,0)$ to the left half-plane (and to an appropriate Riemann surface).
\end{enumerate}
\end{prop}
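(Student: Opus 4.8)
The plan is to exhibit, in each half-plane, an explicit elementary function, holomorphic there, whose restriction to the relevant half-axis coincides with $\psi^0_{st}$; uniqueness of the continuation then follows from the identity theorem. The only ingredient is a fixed branch of the logarithm. I would write $\log z=\ln|z|+i\arg z$ for the principal branch, $\arg z\in(-\pi,\pi)$, which is holomorphic on the cut plane $\bC\setminus(-\infty,0]$, and set $z^\al:=e^{\al\log z}$, also holomorphic there. The point is that the open right half-plane $\{\Re z>0\}$, and after the reflection $z\mapsto-z$ the open left half-plane $\{\Re z<0\}$, both lie in this cut plane and avoid the branch point $0$, so the continuations are unambiguous.

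For $\al\neq1$ I would argue as follows. On $(0,+\infty)$ one has $|\xi|^\al=\xi^\al=e^{\al\log\xi}$, so $z\mapsto\Cp z^\al$ is holomorphic on $\{\Re z>0\}$ and agrees with $\psi^0_{st}$ on $(0,+\infty)$, giving (a). For (b), on $(-\infty,0)$ one has $|\xi|^\al=(-\xi)^\al$ with $-\xi>0$, and $z\mapsto\Cm(-z)^\al=\Cm e^{\al\log(-z)}$ is holomorphic exactly where $-z\notin(-\infty,0]$, i.e.\ on $\bC\setminus[0,+\infty)\supset\{\Re z<0\}$, and restricts to $\Cm|\xi|^\al$ on $(-\infty,0)$.

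The index $\al=1$ would be treated identically, with $z^\al$ replaced by $z\log z$. On $(0,+\infty)$, \eq{SLnu1} reads $\sg\xi+i\sg(2\be/\pi)\xi\ln\xi$, the restriction to $(0,+\infty)$ of the function $z\mapsto\sg z\bigl(1+i(2\be/\pi)\log z\bigr)$ holomorphic on $\{\Re z>0\}$; on $(-\infty,0)$, writing $w=-\xi>0$ and using $\mathrm{sign}\,\xi=-1$, \eq{SLnu1} becomes $\sg w\bigl(1-i(2\be/\pi)\ln w\bigr)$, the restriction of $z\mapsto\sg(-z)\bigl(1-i(2\be/\pi)\log(-z)\bigr)$, holomorphic on $\bC\setminus[0,+\infty)$. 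The parenthetical clause about an \emph{appropriate Riemann surface} is then immediate: $z^\al$ and $z\log z$ are genuinely multivalued with a branch point at $0$, so the continuations cannot be single-valued on a punctured neighbourhood of $0$, but they do continue along every path avoiding the origin, hence to single-valued holomorphic functions on the Riemann surface of the logarithm.

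I do not anticipate a genuine obstacle; the whole content is the choice of branch plus the verification that the real-axis formulas \eq{SLnuneq10} and \eq{SLnu1} are restrictions of the standard complex powers. The one place demanding care is $\al=1$: the factor $\mathrm{sign}\,\xi$ together with $\ln|\xi|$ (rather than $\log\xi$) forces the reflection $z\mapsto-z$ and the sign flip of the imaginary part in the left-half-plane continuation, so that the continuations produced in (a) and (b) are in general different branches and need not agree across the imaginary axis.
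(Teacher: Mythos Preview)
Your proposal is correct and follows the same idea as the paper: the paper's proof is the single line ``It suffices to note that $\xi^\al=\exp[\al\ln\xi]$ for $\xi\in\bC\setminus(-\infty,0]$,'' and you have simply unpacked this, treating the left half-plane via the reflection $z\mapsto-z$ and handling the $\al=1$ case explicitly (which the paper's one-liner leaves implicit).
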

\begin{proof} It suffices to note that  $\xi^\al=\exp[\al\ln \xi]$ for $\xi\in\bC\setminus (-\infty,0]$.
\end{proof}
We use  the notation
 $\cC_{\gam,\gap}=\{e^{i\varphi}\rho\ |\ \rho> 0, \varphi\in (\gam,\gap)\cup (\pi-\gap,\pi-\gam)\}$, 
 $\cC_{\ga}=\{e^{i\varphi}\rho\ |\ \rho> 0, \varphi\in (-\ga,\ga)\}$. The proof of the following lemma is by inspection. 
\begin{lem}\label{lem:asymp_psi_st}  For any $\ga\in (0,\pi/2)$, as $(\cC_\ga\ni)\xi\to \infty$,
\begin{enumerate}[(a)]
\item
if $\al\in (1,2)$ or $\al\in(0,1)$ and $\mu=0$,
\bbe\label{aspsimain}
\psi_{st}(\xi)=\Cp \xi^\al 
(1+|\mu|O(|\xi|^{1-\al})),
\ee
and
\bbe\label{aspsimainRe}
\Re\psi_{st}(\xi)=|\Cp|\cos(\varphi_0+\varphi) |\xi|^\al 
(1+|\mu|O(|\xi|^{1-\al})),
\ee
where $\varphi_0=\mathrm{arg}\,\Cp$ and $\varphi=\mathrm{arg}\,\xi$;
\item
if $\al\in (0,1)$ and $\mu\neq 0$,
\bbe\label{aspsi01}
\psi_{st}(\xi)=-i\mu \xi(1+O(|\xi|^{\al-1})),
\ee
and
\bbe\label{aspsi01Re}
\Re\psi_{st}(\xi)=\mu \sin(\varphi)|\xi|(1+O(|\xi|^{\al-1}));
\ee
\item
if $\al=1$ and $c_+=c_-=c$, then 
\bbe\label{aspsi1eq}
\psi_{st}(\xi)=(c\pi-i\mu) \xi,
\ee
and
\bbe\label{aspsi1eqRe}
\Re\psi_{st}(\xi)=\sqrt{(c\pi)^2+\mu^2}\cos(-\arctan(\mu/(c\pi))+\varphi) |\xi|;
\ee
\item
if $\al=1$ and $c_+\neq c_-$, then 
\bbe\label{aspsi1neq}
\psi_{st}(\xi)=((c_++c_-)\pi/2-i\mu)\xi+i(c_+-c_-)\xi\ln\xi,
\ee
and
\bbe\label{aspsi1neqRe}
\Re\psi_{st}(\xi)=(c_--c_+)\sin(\varphi)|\xi|\ln|\xi|+O(|\xi|).
\ee
Furthermore, if $c_+>c_-$, then there exists $R>0$ s.t. for   $\xi\in \{\xi \in \cC_{-\pi/2,0}\ | \ |\xi|\ge R\}$,
\bbe\label{aspsi1neqRe1p}
\Re\psi_{st}(\xi)\ge \sqrt{\sg^2+\mu^2}\cos(\varphi-\arctan(\mu/\sg)|\xi|,
\ee
and if $c_+<c_-$, then \eq{aspsi1neqRe1p} holds for   $\xi\in \{\xi \in\cC_{0,\pi/2}\ |\ |\xi|\ge R\}$.
\end{enumerate}
\end{lem}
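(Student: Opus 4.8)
The plan is to treat the lemma as a direct ``by inspection'' computation, organizing the four cases by which of the two summands of the characteristic exponent dominates as $|\xi|\to\infty$ along $\cC_\ga$. Since $\ga<\pi/2$ the cone lies in the open right half-plane, so Proposition \ref{prop:anal_psi_st}(a) lets me work with the analytic continuation of $\psi^0_{st}$ there. For $\al\neq 1$ this continuation is $\Cp\xi^\al$ with the principal branch $\xi^\al=\exp[\al\ln\xi]$, and the drift $-i\mu\xi$ continues trivially, so on $\cC_\ga$ one has $\psi_{st}(\xi)=-i\mu\xi+\Cp\xi^\al$.

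In case (a) with $\al\in(1,2)$ I would factor out $\Cp\xi^\al$: because $|\xi^\al|=|\xi|^\al$ dominates $|\xi|$, the ratio $(-i\mu\xi)/(\Cp\xi^\al)$ is $|\mu|\,O(|\xi|^{1-\al})$ with $1-\al<0$, which is \eq{aspsimain}; when $\al\in(0,1)$ and $\mu=0$ the identity $\psi_{st}=\Cp\xi^\al$ is exact and the error term is absent. In case (b), $\al\in(0,1)$ with $\mu\neq 0$, the roles of the two summands reverse, and factoring out $-i\mu\xi$ leaves the correction $\Cp\xi^\al/(-i\mu\xi)=O(|\xi|^{\al-1})$, giving \eq{aspsi01}. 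In case (c), \eq{SLnu1} collapses to $\psi^0_{st}(\xi)=c\pi\xi$ after continuation, so $\psi_{st}(\xi)=(c\pi-i\mu)\xi$ exactly, i.e.\ \eq{aspsi1eq}. The accompanying real-part identities \eq{aspsimainRe}, \eq{aspsi01Re}, \eq{aspsi1eqRe} then follow by writing $\xi=|\xi|e^{i\varphi}$ and taking real parts of the leading terms, using $\xi^\al=|\xi|^\al e^{i\al\varphi}$ in (a) and $\sg-i\mu=\sqrt{\sg^2+\mu^2}\,e^{-i\arctan(\mu/\sg)}$ (with $\sg=c\pi$) in (c).

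Case (d) requires the analytic continuation of the logarithmic term. For real $\xi>0$, \eq{SLnu1} reads $\sg\xi+i(\cp-\cm)\xi\ln\xi$ (using $\sg\cdot 2\be/\pi=\cp-\cm$); since $\xi\ln\xi$ is holomorphic on the principal-branch cut plane, uniqueness of continuation identifies this with the continuation of $\psi^0_{st}$, which is \eq{aspsi1neq}. Substituting $\ln\xi=\ln|\xi|+i\varphi$, the term $i(\cp-\cm)\xi\ln\xi$ contributes $(\cm-\cp)\sin(\varphi)|\xi|\ln|\xi|$ to $\Re\psi_{st}$ at leading order, while the remaining $\varphi\cos\varphi$ piece and all of $\Re((\sg-i\mu)\xi)$ are $O(|\xi|)$; this is \eq{aspsi1neqRe}.

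The only genuinely substantive step is the lower bound \eq{aspsi1neqRe1p}, where the $O(|\xi|)$ remainder must be shown to have a definite sign rather than merely being bounded in modulus; I would therefore keep the index-$1$ identity exact,
\[
\Re\psi_{st}(\xi)=\sqrt{\sg^2+\mu^2}\,\cos(\varphi-\arctan(\mu/\sg))\,|\xi|-(\cp-\cm)\,|\xi|\bigl(\sin(\varphi)\ln|\xi|+\varphi\cos\varphi\bigr).
\]
For $\cp>\cm$ and $\xi\in\cC_{-\pi/2,0}$ one has $\varphi\in(-\pi/2,0)$, so $\sin\varphi<0$ and $\varphi\cos\varphi<0$; hence for $|\xi|>1$ the bracket is negative and, since $\cp-\cm>0$, the subtracted quantity is $\le 0$, so $\Re\psi_{st}(\xi)$ is bounded below by its first summand, which is exactly \eq{aspsi1neqRe1p} with any $R\ge 1$. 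The case $\cp<\cm$ is symmetric on $\cC_{0,\pi/2}$: there $\varphi\in(0,\pi/2)$ makes both $\sin\varphi$ and $\varphi\cos\varphi$ positive, while $\cp-\cm<0$ flips the sign, so the subtracted quantity is again $\le 0$. The main (and essentially only) obstacle is this sign bookkeeping; everything else is routine substitution into \eq{SLnuneq10}, \eq{Cp}, \eq{SLnu1}.
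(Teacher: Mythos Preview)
Your proposal is correct and matches the paper's approach: the paper's entire proof is the single sentence ``The proof of the following lemma is by inspection,'' and your write-up is precisely the detailed inspection computation that sentence stands in for. One small remark: by the paper's definition $\cC_{-\pi/2,0}$ is the two-sided cone with $\varphi\in(-\pi/2,0)\cup(\pi,3\pi/2)$, so your sign analysis for \eq{aspsi1neqRe1p} covers only the right-half-plane wing; the left wing follows immediately from the reflection symmetry $\Re\psi_{st}(-\bar\xi)=\Re\psi_{st}(\xi)$ (or by repeating the computation with the continuation from $(-\infty,0)$), which is worth stating explicitly.
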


\begin{lem}\label{SLroots}
For 
any $\xi\in \bC\setminus i\bR$ and $q>0$, $q+\psi_{st}(\xi)\neq 0$.
\end{lem}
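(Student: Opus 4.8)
The plan is to rephrase the statement as an omitted-values assertion and reduce it to a sign computation. Since $q>0$, the identity $q+\psi_{st}(\xi)=0$ holds iff $\psi_{st}(\xi)=-q\in(-\infty,0)$, so it suffices to show that $\psi_{st}$ takes no strictly negative real value on $\bC\setminus i\bR$. Because the L\'evy density is real, $\psi_{st}$ satisfies the reflection identity $\psi_{st}(\xi)=\overline{\psi_{st}(-\bar\xi)}$, which intertwines the two branches supplied by Proposition \ref{prop:anal_psi_st} and maps the open left half-plane onto the open right half-plane; hence it is enough to treat $\xi$ with $\Re\xi>0$. Writing $\xi=\rho e^{i\varphi}$ with $\rho>0$ and $\varphi\in(-\pi/2,\pi/2)$, I will show that at every point of the right half-plane where $\Im\psi_{st}(\xi)=0$ one has $\Re\psi_{st}(\xi)>0$; this excludes the value $-q$.

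For $\al\neq1$ the right-half-plane branch is $\psi_{st}(\xi)=\Cp\xi^\al-i\mu\xi$ (Proposition \ref{prop:anal_psi_st}), so with $\varphi_0=\arg\Cp$,
\[
\Im\psi_{st}(\xi)=|\Cp|\rho^\al\sin(\varphi_0+\al\varphi)-\mu\rho\cos\varphi,\qquad \Re\psi_{st}(\xi)=|\Cp|\rho^\al\cos(\varphi_0+\al\varphi)+\mu\rho\sin\varphi.
\]
When $\mu=0$ the equation $\Im\psi_{st}=0$ forces $\sin(\varphi_0+\al\varphi)=0$, and the range bound below shows the only admissible solution is $\varphi_0+\al\varphi=0$, whence $\Re\psi_{st}=|\Cp|\rho^\al>0$. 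When $\mu\neq0$ I solve $\Im\psi_{st}=0$ for $\rho^{\al-1}$ and substitute into $\Re\psi_{st}$; a short trigonometric simplification gives $\Re\psi_{st}(\xi)=\mu\rho\,\cos(\varphi_0+(\al-1)\varphi)/\sin(\varphi_0+\al\varphi)$, and the relation defining the zero set of $\Im\psi_{st}$ forces $\mu$ and $\sin(\varphi_0+\al\varphi)$ to share sign, so the prefactor is positive and the claim reduces to $\cos(\varphi_0+(\al-1)\varphi)\ge0$.

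The crux is therefore a uniform bound on $\varphi_0=\arg\Cp$. From \eq{Cp} one computes $\Re\Cp>0$ and $\tan\varphi_0=-\tfrac{\cp-\cm}{\cp+\cm}\tan(\pi\al/2)$, which, tracking the sign of $\Ga(-\al)$ for $\al\in(1,2)$ versus $\al\in(0,1)$, yields $|\varphi_0|\le\tfrac\pi2\min(\al,2-\al)$, with equality only in the one-sided cases $\cm=0$ or $\cp=0$. Consequently $|\varphi_0+(\al-1)\varphi|<|\varphi_0|+|\al-1|\tfrac\pi2\le\tfrac\pi2$ (strict since $|\varphi|<\pi/2$ and $\al\neq1$), giving $\cos(\varphi_0+(\al-1)\varphi)>0$; the same bound shows $\varphi_0+\al\varphi\in(-\pi,\pi)$, justifying the $\mu=0$ step. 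I expect this argument bound to be the main obstacle, as it is the place where the admissibility of the stable parameters enters.

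Finally, for $\al=1$ the symmetric case \eq{aspsi1eq} is linear, $\psi_{st}(\xi)=(c\pi-i\mu)\xi$, so $\psi_{st}(\xi)=-q$ would force $\Re\xi<0$, contradicting $\Re\xi>0$. In the asymmetric case \eq{aspsi1neq}, $\psi_{st}(\xi)=(\sg-i\mu)\xi+i(\cp-\cm)\xi\ln\xi$; the same polar computation, now with $\ln\xi=\ln\rho+i\varphi$, lets me solve $\Im\psi_{st}=0$ for $\ln\rho$ and substitute, after which $\Re\psi_{st}$ collapses to $\rho(\sg-(\cp-\cm)\varphi)/\cos\varphi$. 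Since $\cos\varphi>0$ and $|(\cp-\cm)\varphi|<|\cp-\cm|\tfrac\pi2\le(\cp+\cm)\tfrac\pi2=\sg$, this is strictly positive, completing the right-half-plane analysis and, via the reflection identity, the lemma.
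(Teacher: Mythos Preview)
Your proof is correct and proceeds by a direct case-by-case analysis of when $\psi_{st}$ can take strictly negative real values, reducing everything to the key bound $|\varphi_0|\le\tfrac{\pi}{2}\min(\al,2-\al)$ on the argument of $\Cp$, which you extract from \eq{Cp}. The paper takes a completely different route: it does not argue directly at all, but cites \cite{EfficientAmenable}, where the statement is established for the broad class of Stieltjes--L\'evy processes (via a representation of the characteristic exponent in terms of Stieltjes functions, which structurally forces $q+\psi(\xi)\neq0$ off the imaginary axis), together with the verification that stable processes belong to that class. Your approach is self-contained and elementary, exploiting the explicit closed form of the stable exponent and requiring nothing beyond the formulas \eq{stdr}--\eq{aspsi1neq}; the paper's approach is more conceptual and transfers verbatim to mixtures, subordinations, and other SL-processes, at the cost of depending on an external reference and a substantial amount of additional machinery.
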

\begin{proof}
This property is proved in \cite{EfficientAmenable} for a wide class of L\'evy processes
called {\em Stieltjes-L\'evy processes} (SL-processes). It is also proved that stable L\'evy processes are SL-processes.
\end{proof}

If $\al\ge 1$ or $\al\in (0,1)$ and $\mu=0$, set $\bar\al=\al$. If $\al\in (0,1)$ and $\mu\neq 0$, set $\bar\al=1$.

The following lemma is immediate from Lemma \ref{lem:asymp_psi_st}.
\begin{lem}\label{lem:psi_st_bounds}
\begin{enumerate}[(a)]
\item
For any $\ga\in (0,\pi/2)$ and any $q>0$, there exists $C>0$ such that
\bbe\label{bound_psi_inv}
q/|q+\psi_{st}(\xi)|\le C(1+|\xi|)^{-\bar\al},\ \xi\in \cC_\ga.
\ee
\item
There exist $-\pi/2<\gam\le 0\le \gap<\pi/2$ and $c, R>0$ such that $\gam<\gap$ and
\bbe\label{lim_bound_Re_psi}
\Re\psi^0_{st}(\xi)\ge c|\xi|^{\al},\ \xi\in \cC_{\gam,\gap}, |\xi|\ge R.
\ee
\item
\begin{enumerate}[(i)]
\item
If $\al\in (0,1)\cup (1,2)$ or $\al=1$ and $c_+=c_-$, then $\gam<0<\gap$.
\item
If $\al=1$ and $c_+>c_-$ (resp., $c_+<c_-$), then $\gap=0$ (resp., $\gam=0$).
\end{enumerate}
\end{enumerate}
\end{lem}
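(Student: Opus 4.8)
The plan is to read off all three parts from the asymptotics of \lemm{lem:asymp_psi_st} together with the no-root property of \lemm{SLroots}, treating each item separately and splitting the cone into a neighbourhood of infinity and a compact remainder.

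For part (a), on the tail I extract from \lemm{lem:asymp_psi_st} a lower bound $|\psi_{st}(\xi)|\ge c'|\xi|^{\bar\al}$ valid on $\cC_\ga$ for $|\xi|\ge R$: in the regime \eq{aspsimain} this is $\sim|\Cp|\,|\xi|^\al$ with $\bar\al=\al$; in \eq{aspsi01} it is $\sim|\mu|\,|\xi|$ with $\bar\al=1$; in \eq{aspsi1eq} it is $\sqrt{(c\pi)^2+\mu^2}\,|\xi|$; and in \eq{aspsi1neq} the leading $|\xi|\ln|\xi|$ term dominates $|\xi|=|\xi|^{\bar\al}$. Enlarging $R$ so that $c'|\xi|^{\bar\al}\ge 2q$ there, the triangle inequality gives $|q+\psi_{st}(\xi)|\ge|\psi_{st}(\xi)|-q\ge (c'/2)|\xi|^{\bar\al}$, whence $q/|q+\psi_{st}(\xi)|\le (2q/c')|\xi|^{-\bar\al}\le C_1(1+|\xi|)^{-\bar\al}$. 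On the compact set $\overline{\cC_\ga}\cap\{|\xi|\le R\}$ the function $q+\psi_{st}$ is continuous (by Proposition~\ref{prop:anal_psi_st}, with $\psi_{st}(0)=0$) and nowhere zero: this set meets $i\bR$ only at the origin, where the value is $q$, and off $i\bR$ I invoke \lemm{SLroots}. Hence $|q+\psi_{st}|\ge m>0$ there, and since $(1+|\xi|)^{-\bar\al}$ is bounded below on the compact set, $q/|q+\psi_{st}(\xi)|\le C_2(1+|\xi|)^{-\bar\al}$. Taking $C=\max(C_1,C_2)$ finishes (a).

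For parts (b) and (c)(i) in the cases $\al\neq 1$ and $\al=1,\ c_+=c_-$, I use that $\psi^0_{st}(\xi)=\Cp\xi^\al$ on the right half-plane (Proposition~\ref{prop:anal_psi_st}), so with $\varphi_0=\arg\Cp$ and $\varphi=\arg\xi$ one has $\Re\psi^0_{st}(\xi)=|\Cp|\cos(\varphi_0+\al\varphi)|\xi|^\al$ exactly. The key input is $\Re\Cp>0$ (equivalently $\Re\psi^0_{st}>0$ on $\bR\setminus\{0\}$, a property of a genuine stable exponent), so $\varphi_0\in(-\pi/2,\pi/2)$ and $\cos\varphi_0>0$. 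Choosing $\gap\in(0,\pi/2)$ and $\gam\in(-\pi/2,0)$ small enough that $\varphi_0+\al\varphi$ stays in $(-\pi/2,\pi/2)$ with $\cos(\varphi_0+\al\varphi)\ge\de>0$ for $\varphi\in[\gam,\gap]$ yields $\gam<0<\gap$ (part (c)(i)) and $\Re\psi^0_{st}(\xi)\ge|\Cp|\de\,|\xi|^\al$ on the right-hand part of $\cC_{\gam,\gap}$. The left-hand part, where $\varphi$ is near $\pi$, is handled for free by the reflection symmetry $\Re\psi^0_{st}(-\bar\xi)=\Re\psi^0_{st}(\xi)$, which follows from $\Cm=\overline{\Cp}$ (it holds on $(0,\infty)$ and extends by analyticity). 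This gives (b) with $c=|\Cp|\de$, the symmetric $\al=1$ subcase being the specialization $\varphi_0=0$, $\al=1$, where $\cos\varphi>0$ on $(-\pi/2,\pi/2)$.

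The \emph{delicate case}, and the main obstacle, is (b) and (c)(ii) for $\al=1,\ c_+\neq c_-$, where the leading real part no longer controls the sign uniformly: by \eq{aspsi1neqRe} the dominant term of $\Re\psi^0_{st}$ is $(c_--c_+)\sin\varphi\,|\xi|\ln|\xi|$, which changes sign with $\varphi$. For $c_+>c_-$ positivity of this term forces $\sin\varphi<0$, i.e. $\varphi<0$, so $\varphi>0$ cannot be included and necessarily $\gap=0$ (the mirror argument forces $\gam=0$ when $c_+<c_-$); indeed for $\varphi>0$ the $|\xi|\ln|\xi|$ term drives $\Re\psi^0_{st}\to-\infty$. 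To get the actual lower bound on the open half-cone $\varphi\in(\gam,0)$ I would use the explicit expression $\Re\psi^0_{st}(\xi)=|\xi|\bigl[(\sg-(c_+-c_-)\varphi)\cos\varphi-(c_+-c_-)\sin\varphi\ln|\xi|\bigr]$ and note that for $c_+>c_-$ and $\varphi\in[\gam,0]$ both bracketed terms are nonnegative once $|\xi|\ge 1$, the first being bounded below by $\sg\cos\gam>0$; minimizing it over the compact $\varphi$-interval gives $\Re\psi^0_{st}(\xi)\ge (\sg\cos\gam)\,|\xi|=c|\xi|^\al$, which is (b). (The companion bound \eq{aspsi1neqRe1p} records the analogous estimate for the full exponent $\psi_{st}$.) The reflection symmetry again transfers everything to the cone near $\varphi=\pi$.
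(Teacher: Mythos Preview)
Your proof is correct and is essentially a detailed expansion of the paper's approach: the paper's own proof is the single line ``immediate from \lemm{lem:asymp_psi_st}'', and you supply precisely those details, including the exact formula $\Re\psi^0_{st}(\xi)=|\Cp|\cos(\varphi_0+\al\varphi)|\xi|^\al$ and the explicit computation in the asymmetric $\al=1$ case. You also correctly note that for part~(a) the asymptotics alone do not control the compact region $\{|\xi|\le R\}$ and invoke \lemm{SLroots} together with continuity up to $\xi=0$ to close that gap---a step the paper leaves implicit.
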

\begin{rem}\label{rem:Re vs upper bound}{\rm
The line of integration in the Fourier inversion formula in the next subsection can be deformed into a cone where \eq{lim_bound_Re_psi}
holds, and the lines of integration in the formulas in the following sections - into a cone where
\eq{bound_psi_inv} holds. This fact implies that if GWR algorithm is used to evaluate the Bromwich integral, the restrictions on the type
of deformations in the formulas for the Laplace transform $\tV(f;q,x_1,x_2)$ are milder than when the Fourier inversion is used
to evaluate $\bE[f(x+X_T)]$. One can use the Laplace-Fourier inversion to evaluate $\bE[f(x+X_T)]$ which means that 
we  replace a 1D integral
with 2D. We will analyze in a separate publication the advantages of this  approach in applications to evaluation of pdf and cpdf of strongly asymmetric stable L\'evy processes, of index $\al=1$ or close to 1 especially.
}
\end{rem}

\subsection{Evaluation of $V(T,x)=\bE[f(x+X_T)]$}\label{ss:1D} Representing $f:\bR\to \bC$ as a linear combination of positive functions supported on either  $(-\infty,0]$ or  $[0,+\infty)$, we may restrict ourselves to non-negative $f$ supported on $[0,+\infty)$ only.

\vskip0.1cm
\noindent
{\sc Assumption $f$.} The following conditions are satisfied:
\begin{enumerate}[(I)]
\item
$\supp\, f\in [0,+\infty)$, and,
 for any $\la>0$, $f(\la,\cdot):=f(\cdot)e^{-\la\cdot}\in L_1(\bR;
 \bR_+)$;
 \item  there exist $a\in \bR$ and $\ga_0\in (\pi/2,\pi)$ such that the Fourier transform $\hf$ is of the form $\hf(\xi)=e^{-ia\xi}\hf_0(\xi)$,
where $\hf_0(\xi)$ admits analytic continuation to  $-i\cC_{\ga_{0}}$;

\item there exist  $C>0$, $\al_0<\al$ and $\al_\infty\in \bR$, such that 
\bbe\label{boundfD1}
|\hf_0(\xi)|\le C(|\xi|^{-\al_0-1}\bfo_{(0,1]}(|\xi|)+|\xi|^{\al_\infty-1}\bfo_{[1,+\infty)}(|\xi|)),\ \xi\in -i\cC_{\ga_{0}}.
\ee 
\end{enumerate}
Set $V(\la;T,x)=\bE[f(x+X_T)e^{-\la (x+X_T)}]$.
We have $V(T,x)=\lim_{\la\downarrow 0}V(\la;T,x)$. Let $Y$ be the BM of variance $2|\Cp|$, with drift $\mu$, independent of $X$. Since $\hf_0(\la;\xi)$ 
admits analytic continuation to $\la-i\cC_{\ga_{0}}$ and is polynomially bounded at infinity, we can use the Fourier transform technique.
Set $x'=x+\mu T-a$. We have 
\beqast
V(\la;T,x)&=&\frac{1}{2\pi}\int_\bR e^{ix'\xi-T\psi^0_{st}(\xi)}\hf_0(\la;\xi)d\xi\\
&=&\frac{1}{2\pi}\int_\bR e^{ix'\xi}\left(e^{-T\psi^0_{st}(\xi)}-e^{-T|\Cp|\xi^2}\right)\hf_0(\la;\xi)d\xi +\bE[f(\la;x+Y_T)].
\eqast
As $\xi\to 0$, $e^{-T\psi_{st}(\xi)}-e^{-T|\Cp|\xi^2}=O(|\xi|^\al)$, and $\hf(\la,\xi)=O(|\xi|^{-\al_0-1})$, uniformly in $\la$. Hence, we can pass to the limit $\la\to 0$, and obtain $V(T,x)=V_1(T,x)+\bE[f(x+Y_T)]$, where
\bbe\label{eq:VT}
V_1(T,x)=\frac{1}{2\pi}\int_\bR e^{ix'\xi}\left(e^{-T\psi^0_{st}(\xi)}-e^{-T|\Cp|\xi^2}\right)\hf_0(\xi)d\xi.
\ee
For the BM model, efficient numerical procedures are well-known (see, e.g., \cite{SINHregular} and the bibliography therein),
hence, it suffices to evaluate  $V_1(T,x)$. 
Since $\overline{e^{ix'\xi}}=e^{-ix'\bar\xi}$, $\overline{\psi^0_{st}(\xi)}=\psi^0_{st}(-\bar\xi)$ and $\overline{\hf_0(\xi)}=\hf_0(-\bar\xi)$,
we have
\bbe\label{V1Tx}
V_1(T,x)=\frac{1}{\pi}\Re\int_0^{+\infty} e^{ix'\xi}\left(e^{-T\psi^0_{st}(\xi)}-e^{-T|\Cp|\xi^2}\right)\hf_0(\xi)d\xi.
\ee
Assume that either $\al\neq 1$ or $\al=1$ and $c_+=c_-$. Then there exist $\gam<0<\gap$ such that \eq{lim_bound_Re_psi} holds.
We redefine $\gap:=\min\{\gap,\ga_0-\pi/2\}$, and, if $x'>0$,  take $\om\in (0,\gap/2)$, deform the line of integration into $e^{i\om}\bR_+$, and change the variable $\xi=e^{i\om+y}$:
\bbe\label{V1Tx2}
V_1(T,x)=\frac{1}{\pi}\Re\int_{-\infty}^{+\infty} e^{ix'\xi(y)}\left(e^{-T\psi^0_{st}(\xi(y))}-e^{-T|\Cp|\xi(y)^2}\right)\hf_0(\xi(y))e^{i\om+y}dy.
\ee
If $x'<0$ (resp., $x'=0$), we use \eq{V1Tx2} with $\om\in (\gam/2,0)$ (resp., $\om=(\gap+\gam)/2$).  We evaluate
the integral on the RHS of \eq{V1Tx2} using the infinite trapezoid rule. 

The integrand on the RHS of \eq{V1Tx2}, denote it  $g$, is analytic in the strip
$S_{(-d,d)}:=\{\xi\ | \Im\xi\in (-d,d)\}$, for any $d\in (0,\gap/2)$ (if $x'>0$, $d\in (0,-\gam/2)$; if $x'=0$, 
$d\in (0, (\gap-\gam)/2)$) and decays at infinity sufficiently fast so that
$\lim_{A\to \pm\infty}\int_{-d}^d |g(i a+A)|da=0,$
and 
\bbe\label{Hnorm}
H(g,d):=\|g\|_{H^1(S_{(-d,d)})}:=\lim_{a\downarrow -d}\int_\bR|g(i a+ y)|dy+\lim_{a\uparrow d}\int_\bR|g(i a+y)|dy<\infty
\ee
is finite. We write $g\in H^1(S_{(-d,d)})$. The integral
$I=\int_\bR g(\xi)d\xi$
can be evaluated using the infinite trapezoid rule
\bbe\label{inftrap}
I\approx \ze\sum_{j\in \bZ} g(j\ze),
\ee
where $\ze>0$. 
The following key lemma is proved in \cite{stenger-book} using the heavy machinery of sinc-functions. A simple proof can be found in
\cite{paraHeston}.
\begin{lem}[\cite{stenger-book}, Thm.3.2.1] Let $g\in H^1(S_{(-d,d)})$. 
The error of the infinite trapezoid rule \eq{inftrap} admits an upper bound 
\bbe\label{Err_inf_trap}
{\rm Err}_{\rm disc}\le H(g,d)\frac{\exp[-2\pi d/\ze]}{1-\exp[-2\pi d/\ze]}.
\ee
\end{lem}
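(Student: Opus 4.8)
The plan is to reduce the estimate to the Poisson summation formula and then control the non-zero frequency terms by pushing the contour of the Fourier integral into the strip of analyticity. Write $\tilde g(\nu):=\int_\bR g(\xi)e^{-i\nu\xi}\,d\xi$. Since $g\in H^1(S_{(-d,d)})$ and $\lim_{A\to\pm\infty}\int_{-d}^d|g(ia+A)|\,da=0$, both $g$ and $\tilde g$ are integrable and decay appropriately, so the Poisson summation formula applies in the form
\bbe
\ze\sum_{j\in\bZ}g(j\ze)=\sum_{k\in\bZ}\tilde g\!\left(\frac{2\pi k}{\ze}\right).
\ee
The term $k=0$ equals $\tilde g(0)=\int_\bR g(\xi)\,d\xi=I$, so the discretization error of \eq{inftrap} is exactly the sum over the non-zero frequencies,
\bbe
{\rm Err}_{\rm disc}=\ze\sum_{j\in\bZ}g(j\ze)-I=\sum_{k\neq 0}\tilde g\!\left(\frac{2\pi k}{\ze}\right).
\ee

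Next I would estimate $\tilde g(2\pi k/\ze)$ for $k\neq 0$ by moving the line of integration defining $\tilde g$ to a horizontal line inside the strip. For a frequency $\nu>0$ the factor $e^{-i\nu\xi}$ decays as $\Im\xi$ decreases, so I shift the contour down to $\Im\xi=-d+\eps$; by Cauchy's theorem the connecting vertical segments vanish in the limit (by the end-decay hypothesis), and letting $\eps\downarrow 0$ gives
\bbe
|\tilde g(\nu)|\le e^{-\nu d}\lim_{a\downarrow -d}\int_\bR|g(ia+y)|\,dy.
\ee
Symmetrically, for $\nu<0$ the contour is raised to $\Im\xi=d-\eps$, yielding $|\tilde g(\nu)|\le e^{-|\nu|d}\lim_{a\uparrow d}\int_\bR|g(ia+y)|\,dy$. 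Denoting the lower and upper boundary $L^1$-norms by $L_-$ and $L_+$, so that $L_-+L_+=H(g,d)$ by \eq{Hnorm}, the indices $k\ge 1$ contribute at most $e^{-2\pi k d/\ze}L_-$ and the indices $k\le -1$ at most $e^{-2\pi|k| d/\ze}L_+$. Summing the two geometric series with ratio $r=e^{-2\pi d/\ze}$,
\bbe
{\rm Err}_{\rm disc}\le (L_-+L_+)\sum_{k\ge1}e^{-2\pi kd/\ze}=H(g,d)\,\frac{e^{-2\pi d/\ze}}{1-e^{-2\pi d/\ze}},
\ee
which is \eq{Err_inf_trap}.

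The one genuinely delicate point is justifying the Poisson summation formula together with the contour shift for a function only assumed to lie in $H^1$ of the strip, without extra smoothness. I expect this to be the main obstacle, and I would resolve it by the residue theorem applied to $g(\xi)\,\frac{\pi}{\ze}\cot(\pi\xi/\ze)$ on the rectangle with horizontal sides $\Im\xi=\pm d$ and vertical sides at $\Re\xi=\pm(N+\tfrac12)\ze$. The simple poles at the lattice points $j\ze$ carry residue $g(j\ze)$, so the enclosed sum is $\ze\sum_j g(j\ze)$ after normalization; the contributions of the vertical sides tend to $0$ as $N\to\infty$ precisely because $\lim_{A\to\pm\infty}\int_{-d}^d|g(ia+A)|\,da=0$; and expanding $\cot(\pi\xi/\ze)$ into geometric series in $e^{\pm 2\pi i\xi/\ze}$ on the two horizontal lines reproduces termwise the frequency decomposition and the exponential bounds above. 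This route bypasses any appeal to differentiability of $g$ and delivers the estimate directly; everything else is a routine summation of geometric series.
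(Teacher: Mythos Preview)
Your proof is correct. The paper does not supply its own proof of this lemma; it simply cites \cite{stenger-book} (a proof via sinc functions) and \cite{paraHeston} for a ``simple proof,'' and your Poisson-summation/contour-shift argument (together with the cotangent-residue variant you outline to handle the bare $H^1$ hypothesis) is precisely that standard elementary proof.
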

Once
an approximately bound for $H(g,d)$ is derived, it becomes possible to satisfy the desired error tolerance
with a good accuracy.  The infinite sum is truncated
\bbe\label{inftrap_trunc}
I\approx \ze\sum_{j=-N_-}^{N_+} g(j\ze).
\ee
As $j\to +\infty$, $g(j\ze)$ exhibits a super-exponential decay, hence, a fairly small $N_+$ can be chosen to satisfy even a very small error tolerance $\eps$. In a neighborhood of  $-\infty$, $g(j\ze)$ admits a bound via $C\exp[(\al-\al_0)j\ze]$, where an approximate upper bound for $C$ can be easily derived. Hence, we choose $N_-=\mathrm{ceil}\,(\ln(C/\eps)/(\al-\al_0))$. If $\hf_0$ admits an asymptotic expansion
as $\xi\to 0$, the asymptotic expansions of $e^{ix'\xi}, e^{-T\psi^0_{st}(\xi)}$ and $e^{-T|\Cp|\xi^2}$ as $\xi\to 0$ can be used
to calculate an asymptotic expansion 
\[
g(j\ze)=\sum_{k=1}^{M-1} c_k e^{\be_k j\ze}+ O(e^{\be_M j\ze}),
\]
where $c_k$ and $\be_k>0$ depend on $\hf_0, \al, c_\pm$ and $T$.  After that we calculate explicitly the sums
$\sum_{j=-\infty}^{-N_--1}e^{\be_k j\ze}$, and choose significantly smaller $N_-=\mathrm{ceil}\,(\ln(C/\eps)/\be_M)$.
See \cite{ConfAccelerationStable} for details. 
\begin{rem}\label{rem:other families}{\rm The complexity of the scheme based on the exponential change of variables admits a bound via
 $C(\ln(1/\eps))^2$. As we demonstrated in \cite{ConfAccelerationStable}, if either $\gap$ or $-\gam$ are very small,
then the half-width of the strip of analyticity in the $y$-coordinate is very small, $\ze$ is very small, and the constant $C$ is extremely
large. In the result, even for a fairly small error tolerance, e.g., E-15, it may be
advantageous to use a polynomial change of variables. The complexity of the corresponding scheme is of the order of
$C\eps^{-m}\ln(1/\eps)$, where $m>0$. Finally, if $\al=1$ and $c_+\neq c_-$, then, depending on the sign of $x'(c_+-c_-)$,
neither the exponential change of variables nor polynomial one can be used, and only a sub-polynomial increase of the rate of the decay of
the integrand can be achieved. The number of terms and CPU time are much larger.

}
\end{rem}

\subsection{An alternative evaluation of $V(T,x)=\bE[f(x+X_T)]$ and GWR algorithm}\label{ss:1D_alt} The trick in this subsection allows us to use the exponential change of variables in all cases, at the cost of an additional integration (Laplace inversion).
Let $f$  satisfy the same conditions as above, with an additional restriction on $\al_\infty$ in \eq{boundfD1}. Namely,
we assume that $\al_\infty<\bar\al$, where 
$\bar\al$ is as in Lemma \ref{lem:psi_st_bounds}. Somewhat counterintuitively, we represent the 1D integral \eq{eq:VT}
as a 2D integral
\bbe\label{eq:V1TBr}
V(T,x)=\frac{1}{2\pi i}\int_{\Re q=\sg}dq\, e^{qT}\frac{1}{2\pi}\int_\bR e^{ix'\xi}\left(\frac{1}{q+\psi^0_{st}(\xi)}-\frac{1}{q+|\Cp|\xi^2}\right)\hf_0(\xi)d\xi.
\ee
If the outer integral (Bromwich integral) is evaluated using GWR algorithm, it suffices to evaluate the inner integral for a moderate number of values of $q>0$. Similarly to \eq{V1Tx}, for each $q$, we simplify the inner integral 
\bbe\label{tVq1D}
\tV(q,x) =\frac{1}{\pi}\Re \int_0^{+\infty}e^{ix'\xi}\left(\frac{1}{q+\psi^0_{st}(\xi)}-\frac{1}{q+|\Cp|\xi^2}\right)\hf_0(\xi)d\xi.
\ee
On the strength \eq{bound_psi_inv}, we can rotate the ray of integration to $e^{i\om}\bR_+$ using any $\om\in (0,\pi/4)$ (resp., $\om\in (-\pi/4,0)$;
$\om\in (-\pi/4,\pi/4)$) if $x'>0$ (resp., $x'<0$; $x'=0$), and the choice $\om=\pi/8$ (resp., $\om=-\pi/8$; $\om=0$) is optimal.
After the change of variables $\xi=e^{i\om+y}$, we apply the simplified trapezoid rule. The half-width of the strip of analyticity
of the integrand in the $y$-coordinate, which can be used to derive the recommendation for the choice of $\ze$, is
an arbitrary $d\in (0,\pi/8)$, if $x'\neq 0$, and $d\in (0,\pi/4)$, if $x'=0$. Thus, in all cases, we can use $\ze$ of the order of
$(\pi^2/4)/\ln(1/\eps)$, whereas in Section \ref{ss:1D}, if either $\gap$ or $-\gam$ is very small, $\ze$ must be very small as well.
Thus,   the number of terms in the simplified trapezoid rule for the evaluation of \eq{tVq1D} is many times smaller than
the number of terms in Section \ref{ss:1D}. Although we need to evaluate $\tV(q,x)$ for all $q$'s used in the algorithm chosen for the Laplace inversion, the number of $q$'s needed to satisfy even a very small error tolerance is measured in dozens, and
the calculations can be easily parallelized. Set
\[
g(y)=e^{ix'\xi(y)}\left(\frac{1}{q+\psi^0_{st}(\xi(y))}-\frac{1}{q+|\Cp|\xi(y)^2}\right)\hf_0(\xi(y))e^{i\om+y}.
\]
The truncation parameters $N_\pm$ in the simplified trapezoid rule 
\bbe\label{tVqsimp}
\tV(q,x)\approx \frac{\ze}{\pi}\Re\sum_{j=-N_-}^{N_+}g(j\ze)
\ee
are chosen taking into account the exponential asymptotics of $g(y)$ as $y\to\pm \infty$. Contrary to Section \ref{ss:1D}, we can explicitly calculate several terms of of the truncated parts of the infinite sum in both neighborhoods of $j=-\infty$ and $j=+\infty$, and
choose  smaller $N_\pm$.

\subsection{ Sinh-acceleration in the Bromwich integral}\label{ss:1D_alt_Br} 
Instead of GWR-algorithm, which typically requires high precision arithmetic to achieve the precision of the order better than E-7-E-8,
we can use the sinh-acceleration in the Bromwich integral (the outer integral on the RHS of \eq{eq:V1TBr}) and the exponential acceleration in the inner integral. This is possible under an additional conditions on the parameters of the process.
\begin{lem}\label{q_xi_cone}
\begin{enumerate}[(a)]
\item
Let either $\al\in (1,2)$ or $\al=1$ and $c_+=c_-$ or $\al\in (0,1)$ and $\mu=0$. 

Then
there exist $\sg>0, \ga_0\in (0,\pi/2)$ and $\gam<0<\gap$ such that
\bbe\label{asym_1}
q+\psi_{st}(\xi)\not\in (-\infty,0],\ \forall\ q\in [\sg,+\infty)+(\cC_{\pi/2+\ga_0}\cup\{0\}),\  \xi\in \cC_{\gam,\gap}. 
\ee
\item
If $\al\neq 1$ or $\al=1$ and $c_+=c_-$, then there exist $\sg>0, \ga_0\in (0,\pi/2)$ and $\gam<0<\gap$ such that $\psi^0_{st}$ 
satisfies \eq{asym_1}.
\item
If $\al=1$ and $c_+\neq c_-$ or $\al\in (0,1)$ and $\mu\neq 0$, then a collection  of $\sg>0, \ga_0\in (\pi/2,\pi)$ and $\gam<0<\gap$ such that
\eq{asym_1} holds does not exist.

\end{enumerate}
\end{lem}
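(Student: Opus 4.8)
The plan is to collapse all three parts to a single scalar inequality comparing $\Re\psi$ and $\Im\psi$ on the cone, and then to verify or refute that inequality case by case.

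First I would unwind the geometry of the $q$-region. Since adding a positive real to any point of $\cC_{\pi/2+\ga_0}$ decreases the modulus of its argument and hence keeps it in $\cC_{\pi/2+\ga_0}$, one checks that $[\sg,+\infty)+(\cC_{\pi/2+\ga_0}\cup\{0\})=\{\sg\}\cup(\sg+\cC_{\pi/2+\ga_0})$, a cone with apex at $\sg>0$. Using the elementary description $z\in-\cC_{\pi/2+\ga_0}\iff \Re z<|\Im z|\tan\ga_0$, a short computation shows that, for a \emph{fixed} $\xi$, there is a $q$ in this region with $q+\psi(\xi)\in(-\infty,0]$ exactly when $\sg+\Re\psi(\xi)<|\Im\psi(\xi)|\tan\ga_0$; call this inequality $(\star)$. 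Thus \eq{asym_1} is equivalent to the strict reverse of $(\star)$ holding at every $\xi\in\cC_{\gam,\gap}$ (the apex point $q=\sg$ is covered once we insist on strictness, so that $q+\psi\neq0$). This is the organising step and converts the problem into a pointwise estimate on $\psi$.

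For the positive statements (a) and (b), recall that by Proposition \ref{prop:anal_psi_st} one has $\psi^0_{st}(\xi)=\Cp\xi^\al$ throughout $\cC_{\gam,\gap}$, with $\varphi_0:=\mathrm{arg}\,\Cp\in(-\pi/2,\pi/2)$, since $\Re\Cp=-\Ga(-\al)(\cp+\cm)\cos(\pi\al/2)>0$ is immediate from \eq{Cp} and the sign of $\Ga(-\al)\cos(\pi\al/2)$ on $(0,1)\cup(1,2)$. Then $\mathrm{arg}\,\psi^0_{st}(\xi)=\varphi_0+\al\varphi$, which for $\gam,\gap$ small lies in a closed subinterval of $(-\pi/2,\pi/2)$, so $\Re\psi^0_{st}(\xi)\ge c_1|\Cp||\xi|^\al$ for some $c_1>0$ while $|\Im\psi^0_{st}(\xi)|\le|\Cp||\xi|^\al$. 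Choosing $\ga_0$ with $\tan\ga_0<c_1$ gives the strict reverse of $(\star)$, which settles (b), and also the subcases of (a) where $\psi_{st}=\psi^0_{st}$ ($\al\in(0,1)$, $\mu=0$) and $\al=1$, $\cp=\cm$ (there $\psi_{st}=(c\pi-i\mu)\xi$ with $\Re(c\pi-i\mu)>0$, handled identically).

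The remaining case of (a), $\al\in(1,2)$ with $\mu\ne0$, is the main obstacle: the drift $-i\mu\xi$ destroys exact cone containment near $\xi=0$, where $\psi_{st}(\xi)\approx-i\mu\xi$ has argument near $\mp\pi/2$. I would treat it as a perturbation. Writing $\Re(-i\mu\xi)=\mu|\xi|\sin\varphi$, whose modulus is at most $|\mu||\xi|\sin\bar\gamma$ with $\bar\gamma=\max(|\gam|,\gap)$, and bounding the drift's contribution to $|\Im\psi_{st}|$ by $|\mu||\xi|$, I find that $(\star)$ is ruled out once $\sg\ge h(|\xi|)$ for all $|\xi|$, where $h(\rho)=|\mu|(\sin\bar\gamma+\tan\ga_0)\rho-(c_1-\tan\ga_0)|\Cp|\rho^\al$ has the shape $a\rho-b\rho^\al$ with $a,b>0$ and $\al>1$. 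Its maximum over $\rho>0$ is a constant times $a^{\al/(\al-1)}b^{-1/(\al-1)}$, which tends to $0$ as $\ga_0,\bar\gamma\to0$ (because $a\to0$ while $b\to|\Cp|\cos\varphi_0>0$); hence for $\ga_0,\gam,\gap$ small enough this maximum is below $\sg$, and (a) follows. Finally, for the negative statement (c) it suffices, by the equivalence above, to exhibit a ray in $\cC_{\gam,\gap}$ on which $(\star)$ holds, and I would obtain the stronger fact that $\Re\psi_{st}(\xi)\to-\infty$ there, so that $\sg+\Re\psi_{st}<0\le|\Im\psi_{st}|\tan\ga_0$ eventually: by \eq{aspsi01Re}, $\Re\psi_{st}(\xi)\sim\mu\sin\varphi\,|\xi|\to-\infty$ on any ray with $\mu\sin\varphi<0$ when $\al\in(0,1)$, $\mu\ne0$, and by \eq{aspsi1neqRe}, $\Re\psi_{st}(\xi)\sim(\cm-\cp)\sin\varphi\,|\xi|\ln|\xi|\to-\infty$ on any ray with $(\cm-\cp)\sin\varphi<0$ when $\al=1$, $\cp\ne\cm$. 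Since any cone with $\gam<0<\gap$ contains rays of both signs of $\varphi$, the offending ray is always present and no admissible collection $\sg,\ga_0,\gam,\gap$ can exist.
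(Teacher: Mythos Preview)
Your argument is correct and, in the drift case of (a), more careful than the paper's. The paper simply cites the asymptotics $\psi_{st}(\rho e^{i\varphi})\sim Ce^{i(\varphi_1+\al\varphi)}\rho^\al$ with $\varphi_1\in(-\pi/2,\pi/2)$ and asserts that for suitably small $\gam<0<\gap$ and some $\sg>0$ one has $\psi_{st}(\xi)\in\cC_{\pi/2-\de}$ on the whole cone, then takes any $\ga_0<\de$. For (c) it reads off from the same asymptotics that $\psi_{st}(\rho e^{i\om})$ eventually lies near the imaginary axis, so $\sg\pm ia+\psi_{st}(\rho e^{i\om})$ hits $(-\infty,0]$ for large $a$.

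Your route is genuinely different in organisation: you first reduce \eq{asym_1} to the single pointwise inequality $\sg+\Re\psi(\xi)>|\Im\psi(\xi)|\tan\ga_0$ and then verify or violate it. This pays off in the case $\al\in(1,2)$, $\mu\neq0$, where the paper's ``$\psi_{st}(\xi)\in\cC_{\pi/2-\de}$ on all of $\cC_{\gam,\gap}$'' is literally false near $\xi=0$ (there $\psi_{st}\approx-i\mu\xi$), and some compactness or perturbation step is tacitly needed; your explicit bound $\max_{\rho>0}(a\rho-b\rho^\al)\to0$ as $\ga_0,\bar\gamma\to0$ makes this honest. For (c) you use the reduction the other way, exhibiting a ray on which $\Re\psi_{st}\to-\infty$ via \eq{aspsi01Re}/\eq{aspsi1neqRe}, which is equivalent to the paper's explicit choice $q=\sg\pm ia$. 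Two cosmetic remarks: in your equivalence $(\star)$ the ``exactly when'' misses the borderline $\Im\psi=0,\ \sg+\Re\psi=0$ coming from the apex $q=\sg$, but you immediately cure this by proving the \emph{strict} reverse; and the identity $\psi^0_{st}(\xi)=\Cp\xi^\al$ is valid on the right half of $\cC_{\gam,\gap}$ only, the left half following from the conjugation symmetry $\overline{\psi^0_{st}(\xi)}=\psi^0_{st}(-\bar\xi)$.
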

\begin{proof} (a) It follows from Lemma \ref{lem:asymp_psi_st} that for any $\varphi\in (-\pi/2,\pi/2)$,
\[
\psi_{st}(\rho e^{i\varphi})\sim C e^{i(\varphi_1+\al\varphi)}\rho^\al,\ \rho\to +\infty,
\]
where $C>0$ and $\varphi_1\in (-\pi/2,\pi/2)$. Hence, for any $\de>0$, there exist $\gam<0<\gap$ and $\sg>0$ such that
for all $\xi\in \cC_{\gam,\gap}$, $\mathrm{arg}\, \psi_{st}(\xi)\in \cC_{\pi/2-\de}$. Then \eq{asym_1} holds for any $\ga_0\in (0,\de)$.

(b) is immediate from (a).

(c) If $\al=1$ and $c_+\neq c_-$, then, for any $\om\in (-\pi/2,\pi/2)$, there exists $c(\om)\in \bR\setminus\{0\}$
such that
  $
  \psi_{sr}(\rho e^{i\om})\sim c(\om) e^{i(\pi/2+\om)}\rho \ln\rho$ as $ \rho\to+\infty.
  $
  If $\al\in (0,1)$ and $\mu\neq 0$, then  the  asymptotic formula holds without the factor $\ln\rho$.
  It follows that if $a>0$ is sufficiently large, then there exists $\rho>0$ s.t. either $\sg+ia+\psi_{sr}(\rho e^{i\om})\in (-\infty,0]$
  or $\sg-ia+\psi_{sr}(\rho e^{i\om})\in (-\infty,0]$.
   \end{proof} 
Assume that either $\al\neq 1$ or $\al=1$ and $c_+=c_-$. Then $\psi^0_{st}$ satisfies the condition in (a). 
 We find $\sg, \ga_0, -\pi/4<\gam<0<\gap<\pi/4$ such that \eq{asym_1} holds for $\psi^0_{st}$
and $\psi^0_{BM}(\xi^2)$. Then choose $\sg_\ell>\sg$,
$\om_\ell\in (0,\ga_0)$, next,  $b_\ell>0$ such that  $\sg_\ell-b_\ell\sin\om_\ell>0$, and then define 
 \bbe\label{eq:sinhLapl}
\chi_{L; \sg_\ell,b_\ell,\om_\ell}(y)=\sg_\ell +i b_\ell\sinh(i\om_\ell+y).
\ee
 Deform 
 the line of integration in the Bromwich integral to
$\cL^{(L)}=\chi_{L; \sg_\ell,b_\ell,\om_\ell}(R)$. For $q\in \cL^{(L)}$, we can calculate
$\tV(q,x)$ choosing $\om\in (\gam,\gap)$ (the choice depends on the sign of $x'$ as in the case $q>0$) and deforming 
the line of integration on the RHS of \eq{eq:V1TBr} into the angle $e^{i\om}\bR_+\cup e^{i(\pi-\om)}\bR_+$:
\[
V (T,x)=\frac{1}{2\pi i}\int_{\cL^{(L)}}dq\, e^{qT}\frac{1}{2\pi}\left(\int_{e^{i(\pi-\om)}\bR_+}+\int_{e^{i\om}\bR_+}\right)e^{ix'\xi}\left(\frac{1}{q+\psi^0_{st}(\xi)}-\frac{1}{q+|\Cp|\xi^2}\right)\hf_0(\xi)d\xi.
\]
We make the change of variables $\xi=e^{i(\pi-\om)+y'}$ and $\xi=e^{i\om+y'}$ in the inner integrals, and 
$q=\chi_{L; \sg_\ell,b_\ell,\om_\ell}(y)$ in the outer integral. Each of the resulting integrals is calculated using the simplified trapezoid rule.

\section{Wiener-Hopf factorization for L\'evy processes and main theorem}\label{ss:WHF_gen} 
The definitions and results in this Section are valid for any L\'evy process $X$ on $\bR$. Let  $\barX_t=\sup_{0\le s\le t}X_s$ and $\uX_t=\inf_{0\le s\le t}X_s$ be the supremum
and infimum processes (defined path-wise, a.s.), and $X_0=\barX_0=\uX_0=0$. 
Let $q>0$ and let $T_q$
be an exponentially distributed random variable of mean $1/q$, independent of $X$. Set $\phipq(\xi)=\bE[e^{i\xi\barX_{T_q}}]$,
$\phimq(\xi)=\bE[e^{i\xi\uX_{T_q}}]$, and define the (normalized) expected present value operators $\cEq$ and $\cE^\pm_q$ by
$\cEq u(x)=\bE[u(x+X_{T_q})]$, $\cEpq u(x)=\bE[u(x+\barX_{T_q})]$, $\cEmq u(x)=\bE[u(x+\uX_{T_q})]$. Evidently, 
$\cEq, \cE^\pm_q: L_\infty(\bR)\to L_\infty(\bR)$ are bounded operators and, for $\xi\in\bR$, 
\[
\cEq e^{ix\xi}=\frac{q}{q+\psi(\xi)}e^{ix\xi},\ \cE^\pm_q e^{ix\xi}=\phi^\pm_q(\xi)e^{ix\xi}.
\]
The probabilisitic form 
 \bbe\label{whf0}
 \frac{q}{q+\psi(\xi)}=\phipq(\xi)\phimq(\xi)
 \ee
 of the Wiener-Hopf factorization identity is a special case of the Wiener-Hopf factorization of functions of wider classes than
 the ones arising in probability. Likewise, the operator form
  \bbe\label{eq:operWHF}
 \cEq=\cEpq\cEmq=\cEmq\cEpq
 \ee
 is a special case of the Wiener-Hopf factorziation used in the general theory of boundary problems
 for pseudo-differential operators (pdo), where more general classes of functions and operators appear
(see, e.g., \cite{eskin}) but additional regularity conditions on functions are imposed. The probabilistic version \eq{eq:operWHF} was proved in   \cite{BLSIAM02,NG-MBS,barrier-RLPE,EPV,IDUU} under additional regularity conditions on the process, and in \cite{single}, for any L\'evy process.
Both forms \eq{whf0} and \eq{eq:operWHF} are immediate from the following lemma; however, \eq{whf0} was derived earlier. See \cite{sato} for the bibliography.
\begin{lem}\label{l:deep} (\cite[Lemma 2.1]{greenwood-pitman}, and
\cite[p.81]{RW})
Let $X$ and $T_q$ be as above.  Then
\begin{enumerate}[(a)]
\item the random variables $\barX_{T_q}$ and
$X_{T_q}-\barX_{T_q}$ are independent; and

\item the random variables $\uX_{T_q}$ and
$X_{T_q}-\barX_{T_q}$ are identical in law.
\end{enumerate}
\end{lem}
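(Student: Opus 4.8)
The plan is to prove part (b) by a time-reversal (duality) argument and part (a) by reducing to the classical Wiener--Hopf factorization for a random walk stopped at an independent geometric time, which is then transported to continuous time by a discretization limit. Part (b) is the easy half and part (a) is where the real work lies.

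For (b), fix $t>0$ and consider the reversed process $\overleftarrow{X}_u:=X_t-X_{(t-u)-}$, $0\le u\le t$. Since $X$ has stationary and independent increments, $(\overleftarrow{X}_u)_{0\le u\le t}$ has the same law as $(X_u)_{0\le u\le t}$. Writing $X_t-\barX_t=-\sup_{0\le s\le t}(X_s-X_t)$ and substituting $u=t-s$ identifies $\sup_{0\le s\le t}(X_s-X_t)$ with $-\inf_{0\le u\le t}\overleftarrow{X}_u$, so that $X_t-\barX_t=\inf_{0\le u\le t}\overleftarrow{X}_u \stackrel{d}{=}\uX_t$ for each fixed $t$ (the exceptional left limit and the a.s.\ absence of a fixed-time jump do not affect the distribution). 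Because $T_q$ is independent of $X$, this fixed-$t$ identity in law survives randomization of $t$ over the law of $T_q$, which gives (b).

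For (a), I would discretize. Fix $\de>0$, set $S_k:=X_{k\de}$, and let $N_\de:=\lfloor T_q/\de\rfloor$. By the lack-of-memory property of the exponential law, $N_\de$ is geometric with $\bP(N_\de=k)=(1-p)^k p$, $p=1-e^{-q\de}$, and it is independent of the random walk $(S_k)_{k\ge0}$ since $T_q\perp X$. The crux is then the discrete Wiener--Hopf factorization: for a walk stopped at an independent geometric time $N_\de$, the running maximum $\bar S_{N_\de}:=\max_{0\le k\le N_\de}S_k$ and the terminal deficit $S_{N_\de}-\bar S_{N_\de}$ are independent, and $S_{N_\de}-\bar S_{N_\de}\stackrel{d}{=}\underline S_{N_\de}$. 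I would establish this by a ladder decomposition: let $0=\tau_0<\tau_1<\cdots$ be the strict ascending ladder epochs of $(S_k)$ with ladder heights $H_1,H_2,\dots$, noting that the running maximum increases exactly at the $\tau_i$. Applying the strong Markov property of the walk at each $\tau_i$ together with the memorylessness of the geometric clock shows that the number of completed ladder steps before $N_\de$ is again geometric, that the counted ladder heights are i.i.d., and, crucially, that the terminal excursion of the walk strictly below its running maximum is independent of the ladder heights building up $\bar S_{N_\de}$; the identity $S_{N_\de}-\bar S_{N_\de}\stackrel{d}{=}\underline S_{N_\de}$ is the discrete counterpart of the reversal used in (b). This step is the main obstacle: organizing the excursion-below-the-maximum decomposition so that the strong Markov property and the lack-of-memory interact cleanly is where essentially all the probabilistic content sits.

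Finally I would let $\de\downarrow0$. Since $X$ is right-continuous with left limits and $N_\de\de\to T_q$ a.s., standard approximation gives $S_{N_\de}\to X_{T_q}$, $\bar S_{N_\de}\to\barX_{T_q}$ and $\underline S_{N_\de}\to\uX_{T_q}$ a.s.; passing to the limit in the joint characteristic functions preserves both the independence in (a) and the distributional identity, the latter furnishing an independent cross-check of the duality proof of (b).
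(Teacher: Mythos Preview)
The paper does not give its own proof of this lemma; it is quoted verbatim as a known result from Greenwood--Pitman and Rogers--Williams, and the paper immediately uses it to derive \eqref{whf0} and \eqref{eq:operWHF}. So there is no ``paper's proof'' to compare against, and your write-up is a self-contained argument for a result the authors take as input.

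On the substance: your argument for (b) by time-reversal is the standard one and is correct; the passage from $X_{t-u}$ to $X_{(t-u)-}$ is harmless since a L\'evy process has at most countably many jump times and the infima coincide a.s. Your route to (a) via discretization and the discrete Wiener--Hopf factorization for a random walk stopped at an independent geometric time is a legitimate and classical approach (this is essentially how Feller and Spitzer arrive at the factorization), and the limiting step $\bar S_{N_\de}\to\barX_{T_q}$ is justified because c\`adl\`ag paths attain their supremum over $[0,t]$ as the limit of suprema over refining grids. The only place I would tighten is the ladder-height sketch: the cleanest discrete argument does not use strict ascending ladder epochs but rather splits at the \emph{last} time $\sigma\le N_\de$ at which $S_\sigma=\bar S_{N_\de}$, after which memorylessness of the geometric clock and the strong Markov property give directly that $(\bar S_{N_\de},\sigma)$ and $(S_{N_\de}-\bar S_{N_\de},N_\de-\sigma)$ are independent. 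By contrast, the Greenwood--Pitman proof the paper cites works directly in continuous time, splitting the path at the time of the maximum using excursion/local-time machinery, avoiding discretization altogether; your route is more elementary but requires the extra limit argument.
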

(By symmetry, the statements (a), (b) are valid with $\barX$ and $\uX$ interchanged).
 By definition, part (a) amounts to the statement that the
probability distribution of the $\bR^2$-valued random variable
$(\barX_{T_q}, X_{T_q}-\barX_{T_q})$ is equal to the product (in the sense
of ``product measure'') of the distribution of $\barX_{T_q}$ and the
distribution of $X_{T_q}-\barX_{T_q}$. Using this observation, in \cite{EfficientLevyExtremum}, we proved the following theorem.
In the formulation of the theorem, we use the notation
$U_+=\{(x\in \bR^2\ |\ x_1\le x_2\}$;  $f_+$ is the extension of $f:U\to \bC$ by 0 to a function on $\bR^2$, $I$ denotes the identity operator, and $\De$ is the diagonal map: $\De(x)=(x,x)$. 
\begin{thm}\label{thm:X_barX_exp}
 Let $X$ be a L\'evy process on $\bR$, $q>0$, and let $f:U_+\to \bR$ be a measurable and uniformly bounded  function
s.t.   $((\cEmq\otimes I)f)\circ \De:\bR\to\bR$ is measurable.
 Then 
 \begin{enumerate}
  \item
 for any $x_1\le x_2$,
 \beqa\label{tVq0}
 q\tV(f,q;x_1,x_2)&=&((\cEq\otimes I)f_+)(x_1,x_2)+(\cEpq w(f;q,\cdot, x_2))(x_1),
 \eqa
 where 
 \bbe\label{eq:wqVtq}
 w(f;q,y,x_2)=\bfo_{[x_2,+\infty)}(y)(((\cEmq\otimes I)f_+)(y,y)-((\cEmq\otimes I)f_+)(y,x_2));
 \ee
  \item
   the RHS' of \eq{tVq0} and \eq{eq:wqVtq} admit analytic continuation w.r.t. $q$ to the right half-plane.
   \end{enumerate} 
 \end{thm}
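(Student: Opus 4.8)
\emph{Part (1).} The plan is to start from the probabilistic meaning of the Laplace transform. Since $T_q$ is exponential of mean $1/q$ and independent of $X$, integrating $e^{-qT}V(f,T;x_1,x_2)$ against the density $qe^{-qT}$ gives
\[
q\tV(f,q;x_1,x_2)=\bE\left[f\left(x_1+X_{T_q},\ \max\{x_2,\,x_1+\barX_{T_q}\}\right)\right].
\]
Writing $y=x_1+X_{T_q}$ and $\bar y=x_1+\barX_{T_q}$ (so $y\le\bar y$ always), I would split the expectation according to whether the running maximum has reached the level $x_2$. On $\{\bar y<x_2\}$ the maximum equals $x_2$ and, since $y\le\bar y<x_2$, the integrand equals $f_+(y,x_2)$; on $\{\bar y\ge x_2\}$ the maximum equals $\bar y$ and the integrand equals $f_+(y,\bar y)$. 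Adding and subtracting $\bE[f_+(y,x_2)\bfo_{\{\bar y\ge x_2\}}]$ and recognizing $\bE[f_+(y,x_2)]=((\cEq\otimes I)f_+)(x_1,x_2)$ yields
\[
q\tV(f,q;x_1,x_2)=((\cEq\otimes I)f_+)(x_1,x_2)+\bE\left[\bfo_{\{\bar y\ge x_2\}}\left(f_+(y,\bar y)-f_+(y,x_2)\right)\right].
\]
It then remains to identify the second term with $(\cEpq w(f;q,\cdot,x_2))(x_1)$.

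This is where \lemm{l:deep} enters, and it is the heart of the argument. I would decompose $X_{T_q}=\barX_{T_q}+R$ with $R:=X_{T_q}-\barX_{T_q}$, so that $y=\bar y+R$. By part (a), $R$ is independent of $\barX_{T_q}$, hence of $\bar y$; by part (b), $R$ is equal in law to $\uX_{T_q}$. Conditioning on $\barX_{T_q}$ and integrating out the independent factor $R$, the relation $R\stackrel{d}{=}\uX_{T_q}$ turns the inner expectations into $\bE_R[f_+(\bar y+R,\bar y)]=((\cEmq\otimes I)f_+)(\bar y,\bar y)$ and $\bE_R[f_+(\bar y+R,x_2)]=((\cEmq\otimes I)f_+)(\bar y,x_2)$. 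Hence the remainder equals $\bE[\bfo_{\{\bar y\ge x_2\}}(((\cEmq\otimes I)f_+)(\bar y,\bar y)-((\cEmq\otimes I)f_+)(\bar y,x_2))]=\bE[w(f;q,x_1+\barX_{T_q},x_2)]$, which is precisely $(\cEpq w(f;q,\cdot,x_2))(x_1)$ by the definition of $\cEpq$. The measurability hypothesis on $((\cEmq\otimes I)f)\circ\De$ is exactly what makes $w$ a legitimate integrand, and uniform boundedness of $f$ renders all the expectations and the Fubini interchange in the conditioning step licit.

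\emph{Part (2).} Here the strategy is to verify analyticity block by block. The operator $\cEq$ acts as the Fourier multiplier $q/(q+\psi(\xi))$; by \lemm{SLroots}, together with $\Re\psi(\xi)\ge 0$ for real $\xi$ (so $\Re(q+\psi(\xi))>0$ when $\Re q>0$), this symbol is analytic and bounded in $q$ on the right half-plane, whence $((\cEq\otimes I)f_+)(x_1,x_2)$ inherits analyticity in $q$. For $\cEpq$ and $\cEmq$ I would invoke the integral (Wiener--Hopf) representation of the factors $\phipq,\phimq$ in terms of $\ln(q/(q+\psi(\cdot)))$: the integrand is analytic in $q$ for $\Re q>0$, the logarithm is single valued there because $q/(q+\psi(\eta))$ avoids $(-\infty,0]$, and the defining integrals converge locally uniformly in $q$; Morera's theorem (or differentiation under the integral sign) then delivers analytic continuation of $\phipq,\phimq$ to $\Re q>0$. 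Finally one checks that these continuations survive the operations assembling the right-hand side --- applying $\cEmq\otimes I$, forming $w$, and applying $\cEpq$ --- using the uniform bound $|f|\le C$ to dominate and pass the analyticity through the expectations.

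The main obstacle is Part (2), and specifically the Wiener--Hopf layer: establishing the analytic continuation of $\phipq,\phimq$ and, more delicately, showing that it is preserved under the composition $\cEpq\circ(\text{forming }w)\circ(\cEmq\otimes I)$. Since $w$ is built from a difference of operator values taken on the diagonal, the care needed is to secure uniform-in-$q$ bounds on compact subsets of $\{\Re q>0\}$ for each factor before invoking Morera, and to keep the logarithm in the factorization single valued throughout the continuation. Part (1), by contrast, is essentially a bookkeeping exercise once \lemm{l:deep} is in hand, the only subtlety being measurability and the Fubini interchange in the conditioning step.
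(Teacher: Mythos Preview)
Your Part~(1) is correct and is precisely the argument the paper points to: the paper does not prove the theorem here but cites \cite{EfficientLevyExtremum}, remarking only that the proof rests on \lemm{l:deep}. Your splitting on $\{\bar y\ge x_2\}$, the add--subtract step isolating $((\cEq\otimes I)f_+)(x_1,x_2)$, and the conditioning via the independent decomposition $X_{T_q}=\barX_{T_q}+R$ with $R\stackrel{d}{=}\uX_{T_q}$ reproduce that route.

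For Part~(2) there is a mismatch between the generality of the statement and the tools you invoke. The theorem is stated for an arbitrary L\'evy process, but you appeal to \lemm{SLroots} (valid for SL-processes) and to the explicit integral representations \eq{phip1}--\eq{phim1} of $\phipq,\phimq$, which the paper sets up only for stable processes with the analytic structure of Section~\ref{s:WHF-StLevy}. The general argument is shorter and purely probabilistic: for bounded measurable $g$, each of $(\cEq g)(x)$, $(\cEpq g)(x)$, $(\cEmq g)(x)$ is $q$ times the Laplace transform $\int_0^\infty e^{-qt}\bE[g(x+Y_t)]\,dt$ (with $Y=X,\barX,\uX$ respectively) of a function bounded by $\|g\|_\infty$, hence analytic on $\{\Re q>0\}$ with bounds locally uniform in $q$. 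Since $|f|\le C$ forces $|w(f;q,y,x_2)|\le 2C$ uniformly in $(q,y,x_2)$, the same representation plus one application of Morera (or differentiation under the integral) for the $q$-dependence inside $w$ yields analyticity of $(\cEpq w(f;q,\cdot,x_2))(x_1)$. No Fourier multipliers or Wiener--Hopf integral formulas are needed, and none of the stable-specific machinery enters.
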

 If an efficient numerical procedure for $\tV(f,q;x_1,x_2)$ is developed, we can calculate the expectation $V(f;T,x_1,x_2)$ as the Bromwich integral
\bbe\label{tVBrom}
V(f;T;x_1,x_2)=\frac{1}{2\pi i}\int_{\Re q=\sg}e^{qT}\tV(f;q;x_1,x_2)\,dq,
\ee
where $\sg>0$ is arbitrary. 

 \begin{rem}\label{rem:LapltVq}{\rm 
   In \cite{EfficientLevyExtremum}, for the case of L\'evy processes with characteristic exponent admitting analytical continuation to a strip,
we proved Theorem \ref{thm:X_barX_exp} for exponentially increasing $f$. In the case of stable L\'evy processes,
the restrictions on the rate of increase of $f$ at infinity are more stringent. If $f$ is unbounded, we recommend to calculate the expectation of the exponentially damped $f$, apply Theorem \ref{thm:X_barX_exp}, and calculate the limit of the resulting analytic expression as the dampening parameter tends to 0. See and
Example \ref{ex:exchange_stable}.}
\end{rem}
Note the following special cases of \eq{tVq0}.
\begin{enumerate}[I.]
\item
European option on the underlying. Let $f(x_1,x_2)=g(x_1)$. Then
\bbe\label{perp_opt_X}
\tV(f,q,x_1,+\infty)=q^{-1}(\cEq g)(x_1).
\ee
We considered this case in Section \ref{s:1D}.
\item
European option on the supremum. Let $f(x_1,x_2)=g(x_2)$. Then, for $x_1\le x_2$,
\bbe\label{perp_opt_barX}
\tV(f,q,x_1,x_2)=q^{-1}(\cEpq g)(x_2).
\ee
\item
No-touch option with the upper barrier $a_2$ and payoff $g(X_T)$ at maturity. Then $f(x_1,x_2)=g(x_1)\bfo_{(-\infty,a_2]}(x_2)$,
and  $q\tV(f,q,x_1,x_2)$ is given by the RHS of \eq{tVq0} or, equivalently, $\tV(f,q,x_1,x_2)$ is given by either of the two formulas
\beqa\label{perp_opt_X_barr2}
\tV(f,q,x_1,x_2)&=&q^{-1}(\cEq g)(x_1)-q^{-1}(\cEpq\bfo_{(a_2,+\infty)}\cEmq  g)(x_1),
\eqa
and
\beqa
\label{perp_opt_X_barr}
\tV(f,q,x_1,x_2)&=&q^{-1}(\cEpq\bfo_{(-\infty,a_2]}\cEmq  g)(x_1). 
\eqa
 Eq. \eq{perp_opt_X_barr} was derived
in \cite{KoBoL,NG-MBS,BLSIAM02,barrier-RLPE} for wide classes of L\'evy processes and generalized for all L\'evy processes in \cite{single}.
\end{enumerate}
 
We finish the section with the outline of a scheme of a numerical calculation of the first term on the RHS of \eq{tVq0}; the numerical tools for the calculations of the second term are analyzed in the rest of the main body of the paper.
 Assume that $\widehat{(f_+)}(\xi_1,x_2)$, the Fourier transform of $f_+(x_1,x_2)$ w.r.t. $x_1$, is a sum of products of oscillating exponents multiplied by functions
analytic in a cone $\cC_\ga\cup (-\cC_\ga), \ga\in (0,\pi/2),$ around $\bR\setminus 0$ and admitting sufficiently good bounds in neighborhoods of 0 and $\infty$. Explicitly,
\bbe\label{reprfplus}
\widehat{(f_+)}(\xi_1,x_2)=\sum_{k=1}^N e^{-ia_k\xi_1}\hg_k(\xi_1,x_2),
\ee
and, for all $x_2\in\bR$ and $\xi\in \cC_\ga\cup (-\cC_\ga)$,
\bbe\label{boundg_k}
|\hg_k(\xi_1,x_2)|\le C_k(x_2)(|\xi_1|^{\al_0-1}\bfo_{|\xi|\le 1}+|\xi_1|^{-1+\de}\bfo_{|\xi|\ge 1}),
\ee 
where  $\al_0>-\al$, $ \de<\al$, are independent of $x_2$, $k$ and $\xi$, and $C_k(x_2)$ depend on $k$ and $x_2$ only. 

We represent $((\cEq\otimes I)f_+)(x_1,x_2)$ as a sum according to \eq{reprfplus}, and evaluate each term
as in Sections \ref {ss:1D_alt}-\ref{ss:1D_alt_Br} ($x_2$ is regarded as a parameter).  Then we apply
GWR algorithm. If $\al\in (1,2)$ or $\al=1$ and $c_+=c_-$ or $\al\in(0,1)$ and $\mu=0$, then the sinh-acceleration can be applied.

\section{Wiener-Hopf factorization for stable L\'evy processes}\label{s:WHF-StLevy}

\subsection{Case of real $q$}\label{ss:form_WHF_real}
In \cite{EfficientAmenable}, we defined a class of Stieltjes-L\'evy processes (SL-processes), and proved that
stable L\'evy processes are SL-processes. We also proved that the characteristic exponent $\psi$ of an SL-process
admits analytic continuation to $\bC\setminus i\bR$ and
enjoys the following important property:
\bbe\label{eq:eqSL}
\forall\ q>0\ \mathrm{and}\ \xi\in \bC\setminus i\bR,\ q+\psi(\xi)\neq 0.
\ee
Using \eq{eq:eqSL} and Proposition \ref{prop:anal_psi_st}, we derive
\begin{thm}\label{tm:anal_psi_st} Let $q>0$. Then 
\begin{enumerate}[(a)]
\item
 $\phipq(\xi)$ admits analytic continuation w.r.t. $\xi$ from the upper half-plane to the complex plane with the cut $i(-\infty,0]$ (and an appropriate Riemann surface)
 by 
 \bbe\label{anal_cont_phipq}
 \phipq(\xi)=q/((q+\psi_{st}(\xi))\phimq(\xi));
 \ee
 \item 
$\phimq(\xi)$ admits analytic continuation w.r.t. $\xi$ from the lower half-plane to the complex plane with the cut $i[0,+\infty)$ (and an appropriate Riemann surface)
 by 
  \bbe\label{anal_cont_phimq}
  \phimq(\xi)=q/((q+\psi_{st}(\xi))\phipq(\xi)).
  \ee
 \end{enumerate}
 \end{thm}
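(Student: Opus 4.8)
The plan is to prove (a) and (b) simultaneously, reading \eq{anal_cont_phipq}--\eq{anal_cont_phimq} as the two ways of solving the Wiener--Hopf identity \eq{whf0} for one factor in terms of the other, and to recover the domains of analyticity from those of $\psi_{st}$ and of the two factors on their native half-planes. First I would record the elementary half-plane facts. Since $\barX_{T_q}\ge 0$ a.s., the integral $\phipq(\xi)=\bE[e^{i\xi\barX_{T_q}}]$ converges and is analytic on $\{\Im\xi>0\}$, is bounded by $1$ there, and is continuous up to $\bR$ (dominated convergence, as $|e^{i\xi x}|\le 1$ for $x\ge 0$, $\Im\xi\ge 0$); symmetrically $\phimq$ is analytic on $\{\Im\xi<0\}$, bounded by $1$, and continuous up to $\bR$. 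Next, by Proposition \ref{prop:anal_psi_st} together with $\psi_{st}(\xi)=-i\mu\xi+\psi^0_{st}(\xi)$, the exponent $\psi_{st}$ continues analytically from $(0,+\infty)$ to the right half-plane and from $(-\infty,0)$ to the left half-plane, and by \lemm{SLroots} (equivalently \eq{eq:eqSL}) the quantity $q+\psi_{st}(\xi)$ is nonvanishing on $\bC\setminus i\bR$. Hence $g(\xi):=q/(q+\psi_{st}(\xi))$ is analytic and \emph{zero-free} on $\bC\setminus i\bR$ (its numerator is the constant $q\neq 0$), and \eq{whf0} reads $g=\phipq\,\phimq$ on $\bR$.

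The decisive step is zero-freeness of the factors off the real axis: I must show that $\phimq$ has no zero in the open lower half-plane, and symmetrically $\phipq$ none in the open upper half-plane. On $\bR$ this is immediate, because $g=\phipq\phimq$ is nonvanishing there, so neither factor can vanish. In the open half-planes I would invoke the Spitzer--Rogozin representation of the Wiener--Hopf factors: $\barX_{T_q}$ and $-\uX_{T_q}$ are infinitely divisible nonnegative random variables, so that $\phipq$ and $\phimq$ are exponentials of L\'evy--Khintchine type exponents and are therefore zero-free on the closed upper, resp. lower, half-plane. This is the one place where structural input beyond Proposition \ref{prop:anal_psi_st} and \lemm{SLroots} enters; for stable, hence SL, processes it may alternatively be quoted from the properties of SL-processes in \cite{EfficientAmenable}.

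With zero-freeness in hand the continuation is routine. For (a), define on the open third and fourth quadrants (the lower half-plane minus the negative imaginary axis) the function $\xi\mapsto g(\xi)/\phimq(\xi)$; there $g$ is analytic by Proposition \ref{prop:anal_psi_st} and \lemm{SLroots}, $\phimq$ is analytic and zero-free, so the quotient is analytic, and it is continuous up to $\bR\setminus\{0\}$ from below with boundary values $g/\phimq=\phipq$ by \eq{whf0}. Since $\phipq$ is continuous up to $\bR$ from above with the same boundary values, a Morera argument across $\bR\setminus\{0\}$ glues the two pieces into one analytic function on $\{\Im\xi>0\}\cup(\bR\setminus\{0\})\cup\{\text{third and fourth quadrants}\}=\bC\setminus i(-\infty,0]$, which is exactly the continuation \eq{anal_cont_phipq}. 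The negative imaginary axis survives as a cut precisely because there the two continuations of $\psi_{st}$ (from the right and from the left half-planes) need not agree, whereas on the positive imaginary axis $\phipq$ is already given by its original definition on $\{\Im\xi>0\}$. Part (b) follows by the mirror-image argument, interchanging the half-planes and using \eq{anal_cont_phimq}. I expect the off-axis zero-freeness of the factors to be the only genuine obstacle; the remainder is bookkeeping of the analyticity domains of $\psi_{st}$, the nonvanishing of $q+\psi_{st}$, and the standard gluing.
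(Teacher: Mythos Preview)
Your proof is correct and follows essentially the same route the paper intends: the paper offers no detailed argument, merely stating that the theorem follows from \eq{eq:eqSL} and Proposition~\ref{prop:anal_psi_st}, i.e., from the Wiener--Hopf identity together with the analyticity of $\psi_{st}$ off $i\bR$ and the nonvanishing of $q+\psi_{st}$ there. You have spelled out the gluing via Morera and, importantly, isolated the one step the paper leaves implicit---zero-freeness of $\phimq$ on the closed lower half-plane (resp.\ $\phipq$ on the closed upper half-plane)---and supplied the standard justification through infinite divisibility of $\barX_{T_q}$ and $-\uX_{T_q}$.
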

For $\omp\in (0,\pi/2)$ and $\omm\in (-\pi/2,0)$, define the contours $\cL^+_{\omp}=e^{i(\pi-\omp)}\bR_+\cup e^{i\omp}\bR_+$ and 
$\cL^-_{\omm}=e^{i(-\pi-\omm)}\bR_+\cup e^{i\omm}\bR_+$.
 The direction on each contour is from the left to the right. For $q>0$ and $\xi\in \bR\setminus\{0\}$, define
\beqa\label{phip1}
\phi^{+,'}_q(\xi)&=&\exp\left[\frac{1}{2\pi i}\int_{\cL^-_{\omm}}\frac{\xi\ln(1+\psi_{st}(\eta)/q)}{\eta(\xi-\eta)}d\eta\right],\\
\label{phim1}
\phi^{-,'}_q(\xi)&=&\exp\left[-\frac{1}{2\pi i}\int_{\cL^+_{\omp}}\frac{\xi\ln(1+\psi_{st}(\eta)/q)}{\eta(\xi-\eta)}d\eta\right],
\eqa
and set $\phi^{\pm,'}_q(0)=1$. On the strength of Lemma \ref{SLroots}, 
 for any $\om\in(0,\pi/2)$, the integrands on the RHS' of \eq{phip1} and  \eq{phim1} are well-defined.
Since $\ln (1+\psi_{st}(\xi)/q)=O(|\eta|^\al)$ as $\eta\to 0$, and $=O(|\eta|^\eps)$ as $\eta\to \infty$, for any $\eps>0$,
the integrals are finite.
\begin{lem}\label{lem:stableWHF}
For any $q>0$ and $\xi\in \bR$, $\phi^\pm_q(\xi)=\phi^{\pm,'}_q(\xi)$. 
\end{lem}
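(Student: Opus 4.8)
The plan is to verify that the explicitly defined factors $\phi^{+,'}_q,\phi^{-,'}_q$ of \eq{phip1}--\eq{phim1} solve exactly the same normalized Wiener--Hopf factorization problem as the probabilistic factors $\phipq,\phimq$, and then to appeal to the uniqueness of that factorization. Write $h(\eta)=\ln(1+\psi_{st}(\eta)/q)$; by \lemm{SLroots} the quantity $1+\psi_{st}(\eta)/q$ never meets $(-\infty,0]$ off $i\bR$, so, normalizing the branch by $h(\eta)\to 0$ as $\eta\to 0$, $h$ is single-valued and analytic on $\bC\setminus i\bR$, and the growth estimates quoted after \eq{phim1} ($h=O(|\eta|^\al)$ near $0$, $h=O(|\eta|^\eps)$ at $\infty$) make both integrals absolutely convergent. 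Since $\cL^-_{\omm}$ lies in the open lower half-plane and $\cL^+_{\omp}$ in the open upper half-plane, the exponent in \eq{phip1} is analytic in $\xi$ on $\{\Im\xi>0\}$ and that in \eq{phim1} on $\{\Im\xi<0\}$; hence $\phi^{+,'}_q$ is analytic and zero-free on the upper half-plane, $\phi^{-,'}_q$ on the lower half-plane. Because the kernel carries the factor $\xi$, dominated convergence gives $\phi^{\pm,'}_q(0)=1$.

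The core is the product identity $\phi^{+,'}_q(\xi)\phi^{-,'}_q(\xi)=q/(q+\psi_{st}(\xi))$ for real $\xi\neq 0$. Writing the kernel through the partial fraction $\xi/(\eta(\xi-\eta))=1/(\xi-\eta)+1/\eta$, the sum of the two exponents equals
\[
\frac{1}{2\pi i}\left(\int_{\cL^-_{\omm}}-\int_{\cL^+_{\omp}}\right)\frac{\xi\,h(\eta)}{\eta(\xi-\eta)}\,d\eta .
\]
Traversed as prescribed, $\cL^-_{\omm}$ together with the reversal of $\cL^+_{\omp}$, closed by two arcs at infinity, bounds a positively oriented contour whose interior meets the real axis (in particular contains $\eta=\xi$) but excludes the whole nonzero part of $i\bR$: the two $V$-shaped contours pinch at the origin, and their angular openings release the imaginary axis. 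The arcs at infinity drop out, since the kernel decays like $|\eta|^{-2}$ against the at most logarithmic growth of $h$, and the boundary singularity of $h(\eta)/\eta$ at the origin is integrable because $\al>0$. Thus the only enclosed singularity is the simple pole at $\eta=\xi$, with residue $-h(\xi)$, and the residue theorem evaluates the displayed expression as $-h(\xi)=-\ln(1+\psi_{st}(\xi)/q)$. Exponentiating gives the product formula, which coincides on $\bR$ with the probabilistic identity \eq{whf0}.

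To separate the factors I invoke uniqueness. Both $\phipq$ and $\phi^{+,'}_q$ are analytic, zero-free, and bounded by $1$ on the closed upper half-plane, equal $1$ at the origin, and tend to finite limits as $\xi\to\infty$ there (for $\phipq$ the Baxter--Donsker representation exhibits $\ln\phipq$ as a Cauchy-type integral, securing both zero-freeness and the limit); symmetrically for $\phimq,\phi^{-,'}_q$ on the lower half-plane. Hence $\ln\phipq-\ln\phi^{+,'}_q$ is analytic on $\{\Im\xi>0\}$ and $\ln\phimq-\ln\phi^{-,'}_q$ on $\{\Im\xi<0\}$, and on $\bR$ the two are negatives of one another, because each pair reproduces $\ln(q/(q+\psi_{st}))$. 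They glue across $\bR$ into a single entire function $G$; the half-plane limits make $G$ bounded, and $G(0)=0$ as all four factors equal $1$ at $0$. By Liouville $G\equiv0$, i.e. $\phipq=\phi^{+,'}_q$ and $\phimq=\phi^{-,'}_q$ on $\bR$.

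I expect the main obstacle to be the uniform boundedness underlying the final Liouville step: one must control $\ln\phipq$ and the Cauchy integral \eq{phip1} as $\xi\to\infty$ inside the half-plane and up to $\bR$, so that the glued function is genuinely bounded rather than merely of slow growth, and one must keep careful track of the branch of $h$ and of the orientation near the pinch point $\eta=0$ and along $i\bR$, where $h$ survives only as one-sided boundary values.
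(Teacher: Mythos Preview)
Your approach is the same as the paper's in outline: derive the product identity via the residue theorem, then separate the factors by a Liouville-type uniqueness argument. The product-identity part is fine and matches the paper closely.

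The gap, which you correctly flag at the end, is in the uniqueness step. Your assertion that $\phi^{+,'}_q$ is bounded by $1$ on the closed upper half-plane is unsupported; nothing in the integral representation \eq{phip1} gives $\Re$ of the exponent $\le 0$ a priori. Worse, working with $G=\ln\phipq-\ln\phi^{+,'}_q$ is dangerous: in the generic case $\al_+>0$ both $\phipq$ and $\phi^{+,'}_q$ decay like $|\xi|^{-\al_+}$, so each logarithm diverges like $-\al_+\ln|\xi|$, and boundedness of their difference is exactly what needs proof, not something one can read off from ``finite limits''. The paper sidesteps this by running Liouville on the ratio rather than the log: it shows (deferring to the computations that prove Lemma~\ref{asympWHF}) that $\phi^{\pm,'}_q$ and their reciprocals are polynomially bounded in the respective half-planes, glues $\phipq/\phi^{+,'}_q$ with $\phi^{-,'}_q/\phimq$ to an entire function whose reciprocal is also entire and polynomially bounded, and concludes both are constant, equal to $1$ by the normalization at $0$. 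This is the standard Wiener--Hopf uniqueness argument; your version would become correct once you replace ``bounded'' by ``polynomially bounded together with the reciprocals'' and work with the ratio.
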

\begin{proof} It suffices to consider $\xi\in \bR\setminus\{0\}$. If $\xi>0$, we note that both integrands are analytic in 
$\cC_{\omm,\omp}$, decay at infinity faster than $|\xi|^{-3/2}$ and continuous up to the boundary of $\cC_{\omm,\omp}$. 
Hence, the  residue theorem is applicable, and
we obtain 
\[
\frac{1}{2\pi i}\int_{\cL^-_{\omm}}\frac{\xi\ln(1+\psi_{st}(\eta)/q)}{\eta(\xi-\eta)}d\eta-\frac{1}{2\pi i}\int_{\cL^+_{\omp}}\frac{\xi\ln(1+\psi_{st}(\eta)/q)}{\eta(\xi-\eta)}d\eta=\ln\frac{q}{q+\psi_{st}(\xi)},
\]
therefore, \eq{whf0} holds with the product $\phi^{+,'}_q(\xi)\phi^{-,'}_q(\xi)$ on the RHS. It is known that the Wiener-Hopf factors $\phi^\pm_q(\xi)$
and their reciprocals are polynomially bounded as $\xi\to\infty$ in the corresponding half-plane.  The proof of Lemma \ref{asympWHF} below implies that  $\phi^{+,'}_q(\xi)$ and $\phi^{-,'}_q(\xi)$ are polynomially bounded, and a modification of same proof
can be used to prove that the reciprocals are polynomially bounded. The standard argument based on Moreira's theorem gives 
$\phi^\pm_q(\xi)=\phi^{\pm,'}_q(\xi)$.
\end{proof}

  Define 
$\varphi_0= \varphi_0(\al,\cp,\cm)=\mathrm{arg}\,\Cp(\al,\cp,\cm)$, and
\begin{enumerate}[i.]
\item
if $\al\in (1,2)$ or $\al\in (0,1)$ and $\mu=0$, set $\al^\pm=\al/2\pm \varphi_0/\pi$;
\item
if $\al\in (0,1)$ and $\mu>0$, set $\alp=1, \alm=0$;
\item
if $\al\in (0,1)$ and $\mu<0$, set $\alp=0, \alm=1$;
\item
if $\al=1$ and $c_+=c_-$, set $\alp=\alm=1/2$.
\end{enumerate} 
 The following lemma is proved  in Section \ref{proof:lem_asympWHF}; the asymptotic formulas are valid uniformly in $\xi\in \cC_\ga$, 
 for any $\ga\in (0,\pi/2)$.
 \begin{lem}\label{asympWHF}
  Let $q>0$ and $\ga\in (0,\pi/2)$. Then, 
 \begin{enumerate}[(1)]
 \item
 as $(\cC_\ga\ni)\xi\to 0$, 
 \bbe\label{asphim0}
 \phi^\pm(\xi)=1+O(|\xi|^{\min\{1,\al\}});
 \ee 
 \item
if either
$\al\in (1,2)$ and both $c_\pm>0$ or $\al\in(0,1)$ and $\mu=0$ or $\al=1$ and $c_+=c_-$, then both 
$\al_\pm>0$ and,
 as $(\cC_\ga\ni)\xi\to \infty$,
  \bbe\label{asphipmqmain}
\phi^\pm_q(\xi)=O(|\xi|^{-\al_\pm});
\ee
\item
\begin{enumerate}[(a)]
\item
if $\al\in (1,2)$ and $c_-=0$, then $\alp=\al, \alm=0$, and  $\exists\ \de>0$ s.t. as $(\cC_\ga\ni)\xi\to \infty$,
\beqa\label{asphipqp12}
\phipq(\xi)&=&O(|\xi|^{-\al}),\\\label{asphimqp12}
\phimq(\xi)&=&a^-_q + O(|\xi|^{-\de}),
\eqa
where
\bbe\label{amq12}
a^-_q=\exp\left[-\frac{1}{2\pi i}\int_{\cL^+_{\omp}}\frac{\ln(1-i\mu\eta/(q+\psi^0_{st}(\eta))}{\eta}d\eta\right],
\ee
and $\omp\in (0,\pi/2)$ is arbitrary;
\item
if $\al\in (1,2)$ and $c_+=0$, then $\alp=0, \alm=\al$, and $\exists\ \de>0$ s.t. as $(\cC_\ga\ni)\xi\to \infty$,

\beqa\label{asphipqm12}
\phipq(\xi)&=&a^+_q + O(|\xi|^{-\de}),\\\label{asphimqm12}
\phimq(\xi)&=&O(|\xi|^{-\al}),
\eqa
where
\bbe\label{apq12}
a^+_q=\exp\left[\frac{1}{2\pi i}\int_{\cL^-_{\omm}}\frac{\ln(1-i\mu\eta/(q+\psi^0_{st}(\eta))}{\eta}d\eta\right],
\ee
and $\omm\in (-\pi/2,0)$ is arbitrary;
\item
if $\al\in (0,1)$ and $\mu> 0$, then there exists $\de>0$ such that as $(\cC_\ga\ni)\xi\to \infty$,
\beqa\label{asphipqp01}
\phipq(\xi)&=&O(|\xi|^{-1}),\\\label{asphimqp01}
\phimq(\xi)&=&a^-_q + O(|\xi|^{-\de}),
\eqa
where
\bbe\label{amq}
a^-_q=\exp\left[-\frac{1}{2\pi i}\int_{\cL^+_{\omp}}\frac{\ln((1+\psi^0(\eta)/q)/(1-i\mu\eta/q))}{\eta}d\eta\right],
\ee
and $\omp\in (0,\pi/2)$ is arbitrary;

\item
if $\al\in (0,1)$ and $\mu< 0$, then then there exists $\de>0$ such that as $(\cC_\ga\ni)\xi\to \infty$,
\beqa\label{asphipqm01}
\phipq(\xi)&=&a^+_q+ O(|\xi|^{-\de}),\\\label{asphimqm01}
\phimq(\xi)&=&O(|\xi|^{-1}),
\eqa
where  
\bbe\label{apq}
a^+_q=\exp\left[\frac{1}{2\pi i}\int_{\cL^-_{\omm}}\frac{\ln((1+\psi^0(\eta)/q)/(1-i\mu\eta/q))}{\eta}d\eta\right],
\ee
and $\omm\in (-\pi/2,0)$ is arbitrary;

\item
if $\al=1$ and $c_+>c_-$, then, for any $\eps>0$, as $(\cC_\ga\ni)\xi\to \infty$,
\beqa\label{asphipq1p}
\phipq(\xi)&=&O(|\xi|^{-1+\eps}),\\\label{asphimq1p}
\phimq(\xi)&=& O(|\xi|^{\eps})
\eqa
(and $\phimq(\xi)$ is uniformly bounded on the lower half-plane);
\item
if $\al=1$ and $c_+<c_-$, then, for any $\eps>0$, 
\beqa\label{asphipq1m}
\phipq(\xi)&=&O(|\xi|^{\eps}),\\\label{asphimq1m}
\phimq(\xi)&=& O(|\xi|^{-1-\eps})
\eqa
(and $\phipq(\xi)$ is uniformly bounded on the upper half-plane).
\end{enumerate}
\end{enumerate}
\end{lem}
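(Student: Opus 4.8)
The plan is to work directly from the integral representations \eq{phip1}, \eq{phim1}, which by \lemm{lem:stableWHF} compute $\phipq,\phimq$, and to reduce every claim to the behaviour of a single Cauchy-type integral. Throughout I use the partial-fraction identity $\frac{\xi}{\eta(\xi-\eta)}=\frac1\eta+\frac1{\xi-\eta}$ to write
\[
\ln\phipq(\xi)=C_0+\frac{1}{2\pi i}\int_{\cL^-_{\omm}}\frac{\ln(1+\psi_{st}(\eta)/q)}{\xi-\eta}\,d\eta,\qquad C_0=\frac{1}{2\pi i}\int_{\cL^-_{\omm}}\frac{\ln(1+\psi_{st}(\eta)/q)}{\eta}\,d\eta,
\]
and symmetrically for $\ln\phimq$ over $\cL^+_{\omp}$. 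Since $\xi\in\cC_\ga$ can be kept uniformly separated from the contour by choosing the angles with $\ga<\omp<\pi/2$ and $-\pi/2<\omm<-\ga$, the Cauchy kernel is controlled and all estimates will be uniform on $\cC_\ga$.

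For part (1) ($\xi\to0$) I would estimate crudely. Using $\ln(1+\psi_{st}(\eta)/q)=O(|\eta|^{\min\{1,\al\}})$ as $\eta\to0$ and $=O(|\eta|^\eps)$ for every $\eps>0$ as $\eta\to\infty$ (both from \lemm{lem:asymp_psi_st}), I split the contour into $|\eta|\le|\xi|$, $|\xi|\le|\eta|\le1$, and $|\eta|\ge1$. On each piece $|\xi/(\eta(\xi-\eta))|$ is bounded up to a constant by $\min(|\eta|^{-1},|\xi|\,|\eta|^{-2})$, and the three resulting elementary integrals are each $O(|\xi|^{\min\{1,\al\}})$ (with at worst a harmless logarithmic factor when $\al\ge1$). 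Exponentiating gives \eq{asphim0}.

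Part (2) is the heart. From \lemm{lem:asymp_psi_st}, on each ray one has $\ln(1+\psi_{st}(\eta)/q)=\al\ln\eta+c(\arg\eta)+o(1)$, where the phase constant equals $\ln(\Cp/q)$ in the positive and $\ln(\Cm/q)$ in the negative real direction, so that its jump is $\ln(\Cp/\Cm)=2i\varphi_0$ (recall $\overline{\Cp}=\Cm$, cf. \eq{Cp}). Subtracting this explicit leading term leaves a remainder that decays, whose Cauchy integral contributes only to the constant and an $O(|\xi|^{-\de})$ error. The Cauchy integral of the leading term I evaluate in closed form: it is exactly the Wiener--Hopf (Sokhotski--Plemelj) split of $\al\ln|\xi|+i\varphi_0\,\mathrm{sign}\,\xi$, in which the even logarithmic growth contributes $\al/2$ and the phase jump $2i\varphi_0$ contributes $\pm\varphi_0/\pi$, giving $\ln\phi^\pm_q(\xi)=-\al^\pm\ln\xi+O(1)$ with $\al^\pm=\al/2\pm\varphi_0/\pi$, hence \eq{asphipmqmain}.

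For part (3) I would split off the drift, $\ln(1+\psi_{st}(\eta)/q)=\ln(1+\psi^0_{st}(\eta)/q)+\ln\bigl(1-i\mu\eta/(q+\psi^0_{st}(\eta))\bigr)$. For $\al\in(1,2)$ the drift factor is $O(|\eta|^{1-\al})$ at infinity and $O(|\eta|)$ at $0$, so the $\frac1\eta$-part of its Cauchy integral converges to precisely $\ln a^\pm_q$ as in \eq{amq12}, \eq{apq12}, while the $\frac1{\xi-\eta}$-part is $O(|\xi|^{-\de})$; the power is then carried by the homogeneous factor. The cases $\al\in(0,1),\ \mu\neq0$ are analogous with the roles interchanged: the drift now carries the leading $\ln\eta$ growth, so its Plemelj split produces the integer powers in \eq{asphipqp01}--\eq{asphimqm01}, while the bounded combination $\ln((1+\psi^0_{st}/q)/(1-i\mu\eta/q))$ has a convergent Cauchy integral giving $a^\pm_q$ as in \eq{amq}, \eq{apq}. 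For $\al=1,\ \cp\neq\cm$ the growth $\psi_{st}(\eta)\sim i(\cp-\cm)\eta\ln\eta$ from \eq{aspsi1neq} yields $\ln(1+\psi_{st}/q)\sim\ln\eta+\ln\ln\eta$, whose Cauchy integral produces only logarithmic corrections, explaining the $O(|\xi|^{\pm\eps})$ bounds; the uniform boundedness on the respective half-plane follows from $\Re\psi_{st}\ge0$ on $\bR$ together with \lemm{SLroots}. The main obstacle is the one-sided cases $\cm=0$ and $\cp=0$ with $\al\in(1,2)$: there the unique root of $q+\psi^0_{st}=0$ lies on the imaginary axis, i.e. on the cut of the factor being continued, and attributing the integer power it produces correctly between $\phipq$ and $\phimq$ requires tracking the indentation of the contour around that (creeping) root rather than the naive phase-jump count used in the generic case.
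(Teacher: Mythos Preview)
Your overall strategy---working from the Cauchy-integral representations \eq{phip1}--\eq{phim1} and isolating the leading large-$\eta$ behaviour of $\ln(1+\psi_{st}(\eta)/q)$---is the same as the paper's. The execution of part~(2), however, has a real gap. The partial-fraction identity you invoke at the outset produces the constant $C_0=\frac{1}{2\pi i}\int_{\cL^-_{\omm}}\eta^{-1}\ln(1+\psi_{st}(\eta)/q)\,d\eta$, but this integral \emph{diverges} at infinity: $\ln(1+\psi_{st}/q)\sim\bar\al\ln|\eta|$, so the integrand behaves like $\ln|\eta|/|\eta|\notin L_1$. Likewise, the ``explicit leading term'' $\al\ln\eta+c(\arg\eta)$ you propose to subtract in part~(2) is singular at $\eta=0$, and against the kernel $\xi/(\eta(\xi-\eta))\sim 1/\eta$ (for $\xi$ bounded away from $0$) the resulting integrand $\sim\ln\eta/\eta$ is again non-integrable. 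So neither the ``constant plus Cauchy'' split nor the ``leading plus remainder'' split produces two separately convergent integrals, and the appeal to a Sokhotski--Plemelj formula for the bare logarithm is not justified.

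The paper sidesteps both problems by never splitting the kernel and by choosing a \emph{regularized} comparison: it multiplies $\phi^+_q(\xi)$ by $(1-i\xi(q/|\Cp|)^{-1/\al})^{\alp}$ (and $\phi^-_q$ by the companion factor), which is equivalent to subtracting $\alp\ln(1-i\eta/\cdot)+\alm\ln(1+i\eta/\cdot)$ from $\ln(1+\psi_{st}/q)$. This comparison has exactly the right growth $\al\ln|\eta|$ and phase at infinity, is $O(|\eta|)$ at the origin, and its Wiener--Hopf split is built in. The remainder $\Phi(q,\eta)$ then decays at both ends, so the original (unsplit) integral representation \eq{phip11} applies directly and gives boundedness of $\phi^{+,1}_q$. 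Your treatment of parts~(3a)--(3d) is closer to the paper's, because there the pieces you isolate (e.g.\ $\ln(1-i\mu\eta/(q+\psi^0_{st}))$) \emph{are} integrable against $1/\eta$; but the concern you flag about roots on the cut in the one-sided cases is not something the paper addresses---it simply declares the factorization $\phipq(\xi)=(q+\psi^0_{st}(\xi))^{-1}\phi^{+,1}_q(\xi)$ and proceeds.
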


 \begin{cor}\label{cor:asphipq}
 Let $X$ be either a stable L\'evy process of index $\al\neq 1$ or index $\al=1$ and $c_+=c_-$.
 Then, for any $q>0$, there exist $a^\pm_q\ge 0$ and $\de_\pm>0$ such that, for any $\ga\in (-\pi/2,\pi/2)$,
 \bbe\label{eq:asqpmq}
 \phi^\pm_q(\xi)=a^\pm_q+O(|\xi|^{-\de_\pm}), \ (\cC_\ga\ni)\xi\to \infty.
 \ee 
\end{cor}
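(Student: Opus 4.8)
The plan is to obtain \eq{eq:asqpmq} as a bookkeeping consequence of \lemm{asympWHF}: under the standing hypothesis ($\al\neq 1$, or $\al=1$ and $c_+=c_-$) the behaviour of $\phipq$ and $\phimq$ at infinity is governed by exactly those cases of \lemm{asympWHF} in which each Wiener--Hopf factor either decays like a negative power of $|\xi|$ or converges to an explicit nonzero constant. The excluded regime $\al=1$, $c_+\neq c_-$ is precisely the subcases (3e)--(3f), so the admissible cases are exhaustive. Concretely, I would split according to the index: for $\al\in(1,2)$ the subcases are $c_\pm>0$, $c_-=0$, $c_+=0$; for $\al\in(0,1)$ they are $\mu=0$, $\mu>0$, $\mu<0$; and $\al=1$, $c_+=c_-$ is the last.

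In the cases where \lemm{asympWHF}, part (2), applies (namely $\al\in(1,2)$ with $c_\pm>0$, $\al\in(0,1)$ with $\mu=0$, and $\al=1$ with $c_+=c_-$) both $\al_\pm>0$, so I set $a^\pm_q=0$ and $\de_\pm=\al_\pm$, and \eq{eq:asqpmq} reduces to \eq{asphipmqmain}. In each boundary subcase (3a)--(3d) one factor still decays polynomially, $O(|\xi|^{-\al})$ in (3a),(3b) and $O(|\xi|^{-1})$ in (3c),(3d), so for that factor I take $a^\pm_q=0$ and $\de_\pm$ equal to $\al$ or $1$; for the other factor \lemm{asympWHF} already delivers the form $a^\pm_q+O(|\xi|^{-\de})$ with $a^\pm_q$ the exponential-integral constant of \eq{amq12}, \eq{apq12}, \eq{amq} or \eq{apq} and $\de$ the rate furnished there. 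Thus in every admissible case \eq{eq:asqpmq} holds with some $a^\pm_q$ and $\de_\pm>0$, and the uniformity in $\cC_\ga$ is inherited verbatim from \lemm{asympWHF}.

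The only substantive point is the assertion $a^\pm_q\ge 0$, which is not evident from the defining formulas since they are exponentials of complex contour integrals. I would argue this probabilistically. Since $\barX_{T_q}\ge 0$, its law is an atom at $0$ of mass $\bP(\barX_{T_q}=0)$ plus an absolutely continuous part; letting $\bR_+\ni\xi\to+\infty$ in $\phipq(\xi)=\bE[e^{i\xi\barX_{T_q}}]$ and applying the Riemann--Lebesgue lemma to the density gives $\lim_{\xi\to+\infty}\phipq(\xi)=\bP(\barX_{T_q}=0)\ge 0$, and similarly $\lim_{\xi\to+\infty}\phimq(\xi)=\bP(\uX_{T_q}=0)\ge 0$ (here $\uX_{T_q}\le 0$). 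Because the positive real axis lies in $\cC_\ga$ and the asymptotic constant is unique, these limits coincide with $a^+_q$ and $a^-_q$, whence $a^\pm_q\ge 0$. Alternatively, without invoking probability, one verifies directly that the contour integrals in \eq{amq12}--\eq{apq} are purely imaginary: the map $\eta\mapsto-\bar\eta$ interchanges the two rays of $\cL^\pm_{\om}$, and, using $\overline{\psi^0_{st}(\eta)}=\psi^0_{st}(-\bar\eta)$ together with $\overline{1-i\mu\eta/q}=1+i\mu\bar\eta/q$, the integrand picks up a sign change and a conjugate under this reflection, so the two ray contributions combine into $2i$ times a real integral; dividing by $2\pi i$ leaves a real exponent and hence $a^\pm_q>0$. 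I expect this non-negativity to be the only step requiring care; the remainder is a transcription of \lemm{asympWHF}.
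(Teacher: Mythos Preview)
Your case-by-case reading of \lemm{asympWHF} is exactly what the paper intends (it states the result as a corollary with no separate proof), and your identification of $a^\pm_q$ and $\de_\pm$ in each admissible subcase is correct. You go beyond the paper by justifying $a^\pm_q\ge 0$: your symmetry computation for the integrals \eq{amq12}--\eq{apq} is self-contained and correct, while the probabilistic argument tacitly assumes the law of $\barX_{T_q}$ on $(0,\infty)$ is absolutely continuous---true for stable processes but not argued; a cleaner variant is to note that once \lemm{asympWHF} gives $\phipq(\xi)\to a^+_q$ along $\bR_+$, the Ces\`aro average $\frac{1}{T}\int_0^T\phipq(\xi)\,d\xi$ converges both to $a^+_q$ and (by dominated convergence) to $\bP(\barX_{T_q}=0)$, forcing $a^+_q=\bP(\barX_{T_q}=0)\ge 0$.
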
  
\begin{rem}\label{rem:errorFT}{\rm We use \eq{asphim0} and \eq{eq:asqpmq} to regularize the action of the EPV operators $\cE^\pm_q$ as PDO: $\cE^\pm_q u(x)=\cF^{-1}_{\xi\to x}\phi^\pm_q(\xi)\cF_{x\to\xi} u(x)$. Without the regularization, the integrands in the integral representation  that we derive decay too slowly at infinity, and accurate numerical realizations require unnecessarily long grids.  
 }
 \end{rem}
 For $\be >0$, introduce   the convolution operators  $I^\pm_{\be}$ by 
\beqa\label{def:Ip}
I^+_\be u(x)&=&\be\int_{0}^{\infty} e^{-\be y}u(x+y)dy\\\label{def:Im}
I^-_{\be} u(x)&=&\be \int_{-\infty}^{0} e^{\be y}u(x+y)dy.
\eqa
The symbol of $I^\pm_{\be}$ is $(1\mp i\xi/\be)^{-1}$. The following proposition is evident.
\begin{lem}\label{lem_supp} Let $a_1, a_2\in \bC$, $\be>0$ and let there exist $\be_1<\be$ such that $e^{-\be_1|\cdot|}u\in L_1$. Then
\begin{enumerate}[(a)]
\item
if $u(x)=0$ for all $x< h$ (resp., $x\le h$), then $((a_1I+a_2I^-_{\be})u)(x)=0, x<h$ (resp., $x\le h$);
\item
if $u(x)=0$ for all $x> h$ (resp., $x\ge h$), then $((a_1I+a_2I^+_{\be})u)(x)=0, x>h$ (resp., $x\ge h$).

\end{enumerate}
\end{lem}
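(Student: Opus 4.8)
The plan is to read the conclusion directly off the definitions \eq{def:Ip}--\eq{def:Im}, once the defining integrals are known to converge. The key structural observation is that $I^-_\be$ is a \emph{one-sided} operator: in $I^-_\be u(x)=\be\int_{-\infty}^0 e^{\be y}u(x+y)\,dy$ the variable $y$ ranges over $(-\infty,0]$, so the integrand only samples the values $u(x+y)$ at arguments $x+y\le x$; symmetrically, $I^+_\be u(x)$ only involves $u(x+y)$ at arguments $x+y\ge x$. This one-sidedness is exactly the mechanism that propagates half-line support, so the whole lemma is essentially a matter of inspecting the ranges of integration.

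First I would verify that the operators are well defined under the hypothesis that $e^{-\be_1|\cdot|}u\in L_1$ for some $\be_1<\be$. Substituting $z=x+y$ in \eq{def:Ip} gives $I^+_\be u(x)=\be e^{\be x}\int_x^{+\infty}e^{-\be z}u(z)\,dz$, and writing $e^{-\be z}=e^{-(\be-\be_1)z}e^{-\be_1 z}$ with $\be-\be_1>0$ shows the tail is dominated by $\int e^{-\be_1|z|}|u(z)|\,dz<\infty$; the analogous substitution $z=x+y$ in \eq{def:Im} handles $I^-_\be$ (the relevant tail is at $z\to-\infty$, where $e^{\be z}=e^{-\be_1|z|}e^{(\be-\be_1)z}\le e^{-\be_1|z|}$). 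Hence both integrals are absolutely convergent for every $x$, and the linear combinations $a_1I+a_2I^\pm_\be$ make sense pointwise.

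For part (a), assume $u$ vanishes on $(-\infty,h)$. For $x<h$ the pointwise term gives $(a_1Iu)(x)=a_1u(x)=0$, while in $I^-_\be u(x)$ every argument satisfies $x+y\le x<h$, so $u(x+y)=0$ on the entire domain of integration and $I^-_\be u(x)=0$; thus $((a_1I+a_2I^-_\be)u)(x)=0$ for $x<h$. The closed-support variant ($u=0$ on $(-\infty,h]$, conclusion on $x\le h$) is identical, now using $x+y\le x\le h$. Part (b) follows by the mirror argument: if $u$ vanishes on $(h,+\infty)$, then for $x>h$ the arguments $x+y\ge x>h$ appearing in $I^+_\be u(x)$ force $I^+_\be u(x)=0$, and $a_1u(x)=0$ as well, with the closed-support case handled the same way.

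I do not expect any genuine obstacle here: all the content is in the one-sidedness of the integration ranges, and the integrability condition $e^{-\be_1|\cdot|}u\in L_1$ with $\be_1<\be$ serves only to guarantee that the defining integrals are finite so that the pointwise support argument is legitimate.
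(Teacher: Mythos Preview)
Your proposal is correct and is precisely the inspection the paper has in mind: the paper's entire proof is the single remark that the proposition ``is evident,'' and your argument simply spells out why --- the one-sidedness of the integration ranges in \eq{def:Ip}--\eq{def:Im} together with the exponential integrability condition ensuring the defining integrals converge.
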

We calculate the integrals on the RHS' of the formulas \eq{phip1}-\eq{phim1}  for the Wiener-Hopf factors and \eq{amq12}, \eq{apq12}, \eq{amq}, 
\eq{apq} for the constants $a^\pm_q$ representing each integral as a sum of integrals over two rays of the form
$e^{i\ga}\bR_+$. In the integral over a ray of the form $e^{i\ga}\bR_+$, we change the variable $\eta=e^{i\ga+y}$ and apply the simplified trapezoid rule. In the process of deformation, the expressions under the logarithm sign must be well-defined and may not assume values in $(-\infty,0]$.

\subsection{Case of complex $q$}\label{ss:form_WHF_complex}
If the conformal deformation technique is applied to the Bromwich integral, then the Wiener-Hopf factors need to be calculated
for $q$ on the (deformed) contour of integration, and additional conditions  on the parameters of the process (see  Lemma \ref{q_xi_cone}(a)) and the deformed  contour have to be imposed. The restrictions on the parameters of the process and the rotation parameter $\om$ in the formulas for the Wiener-Hopf factors and formulas for $\tV(f;q;x_1,x_2)$ are similar to but different from to the restrictions in Section \ref{ss:1D_alt_Br}. 
The condition in Lemma \ref{q_xi_cone} (a)  must hold for $\psi_{st}$ because \eq{asym_1} must hold for $\psi_{st}$ whereas in Section \ref{ss:1D_alt_Br},
we need \eq{asym_1} to hold for $\psi^0_{st}$ and $\psi_{BM}$. 

\begin{lem}\label{lem:anal_cont_WHF}
Let the condition in Lemma \ref{q_xi_cone} (a)  hold. Then there exist $\sg>0, \ga_0\in (0,\pi/2)$ and $\gam<0<\gap$ such that
\begin{enumerate}[(a)]
\item
$\phipq(\xi)$ admits analytic continuation to $\{(q,\xi)\ |\ q\in \sg+\cC_{\pi/2+\ga_0},
\xi\in i\cC_{\pi/2-\gam}\}$. For any $\omm\in (\gam,0)$ and $\ga^{--}\in (\omm,0)$, the restriction of $\phipq(\xi)$ on $\{(q,\xi)\ |\ q\in \sg+\cC_{\pi/2+\ga_0},
\xi\in i\cC_{\pi/2-\ga^{--}}\}$ is defined by 
 \eq{phip1};  
\item
$\phimq(\xi)$ admits analytic continuation to $\{(q,\xi)\ |\ q\in \sg+\cC_{\pi/2+\ga_0},
\xi\in -i\cC_{\pi/2+\gap}\}$. For any $\omp\in (0,\gap)$ and $\ga^{++}\in (0,\omp)$, the restriction of $\phimq(\xi)$ on $\{(q,\xi)\ |\ q\in \sg+\cC_{\pi/2+\ga_0},
\xi\in -i\cC_{\pi/2+\ga_{++}}\}$ is defined by 
 \eq{phim1};  
\item
if $\al\in (1,2)$ and $c_-=0$, then $a^-_q$ defined by \eq{amq12} is an analytic function on  $\sg+\cC_{\pi/2+\ga_0}$;
\item
if $\al\in (1,2)$ and $c_+=0$, then $a^+_q$ defined by \eq{apq12} is an analytic function on  $\sg+\cC_{\pi/2+\ga_0}$.
\end{enumerate}
\end{lem}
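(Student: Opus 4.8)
The plan is to \emph{define} the asserted continuations by the right-hand sides of \eq{phip1}--\eq{phim1} for (a),(b) and of \eq{amq12}, \eq{apq12} for (c),(d), to verify that these contour integrals depend analytically on $(q,\xi)$ (resp.\ on $q$) on the stated domains, and then to identify them with $\phipq,\phimq$ (resp.\ $a^\pm_q$) by uniqueness of analytic continuation, using that both sides already agree for $q>0$ by \lemm{lem:stableWHF} (resp.\ by \lemm{asympWHF}). I would treat (a) in full; part (b) is its mirror image under $\xi\mapsto-\xi$, $\cL^-_{\omm}\mapsto\cL^+_{\omp}$, and (c),(d) are analogous but simpler. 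First I would fix $\omm\in(\gam,0)$ and record that $\cL^-_{\omm}\subset\cC_{\gam,\gap}$: the ray $e^{i\omm}\bR_+$ lies in the component of argument $(\gam,\gap)$, while $e^{i(-\pi-\omm)}\bR_+$ has argument $\pi-\omm\in(\pi,\pi-\gam)\subset(\pi-\gap,\pi-\gam)$, the reflected component. Hence, by \eq{asym_1} applied to $\psi_{st}$ under the hypothesis of \lemm{q_xi_cone}(a), $q+\psi_{st}(\eta)\notin(-\infty,0]$ for all $\eta\in\cL^-_{\omm}$ and $q\in\sg+\cC_{\pi/2+\ga_0}$.

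The crux, and the step I expect to be the main obstacle, is to exhibit a single-valued analytic branch of $\ln(1+\psi_{st}(\eta)/q)$ along the contour that varies analytically in $q$, i.e.\ to show $1+\psi_{st}(\eta)/q\notin(-\infty,0]$ throughout. This is not a formal consequence of $q+\psi_{st}(\eta)\notin(-\infty,0]$ once $q$ is complex, so I would argue through arguments. From the proof of \lemm{q_xi_cone}(a), $\psi_{st}(\eta)$ lies in the open right half-cone $\{|\arg w|<\pi/2-\de\}$ for $\eta\in\cC_{\gam,\gap}$, while $q\in\sg+\cC_{\pi/2+\ga_0}$ forces $|\arg q|<\pi/2+\ga_0$ with $\ga_0<\de$. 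If one had $1+\psi_{st}(\eta)/q=-s\le 0$, then $\psi_{st}(\eta)=-(1+s)q$, whose argument is $\pi+\arg q$, giving $|\arg\psi_{st}(\eta)|\ge\pi/2-\ga_0>\pi/2-\de$, contradicting the cone bound. Thus the logarithm, normalized to vanish as $\eta\to0$ along $\cL^-_{\omm}$ (where $\psi_{st}(\eta)\to0$), is well defined and jointly analytic in $(q,\eta)$.

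With the branch fixed, joint analyticity of the integral in $(q,\xi)$ follows by differentiation under the integral sign once locally uniform convergence is checked: near $\eta=0$ one has $\ln(1+\psi_{st}(\eta)/q)=O(|\eta|^{\al})$ against $\xi/(\eta(\xi-\eta))=O(|\eta|^{-1})$, an integrable $O(|\eta|^{\al-1})$; near infinity $\ln(1+\psi_{st}(\eta)/q)=O(|\eta|^{\eps})$ against $O(|\eta|^{-2})$, an integrable $O(|\eta|^{\eps-2})$. The only genuine singularity is $\eta=\xi$; for $\xi\in i\cC_{\pi/2-\ga^{--}}$ with $\ga^{--}\in(\omm,0)$ the contour $\cL^-_{\omm}$ stays bounded away from $\xi$, so the integral is analytic there and coincides with \eq{phip1}. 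To reach the full domain $\xi\in i\cC_{\pi/2-\gam}$ I would invoke Cauchy's theorem: the value is independent of the admissible $\omm$, since the integrand is analytic in the sector swept between two such contours and decays as above; letting $\omm\downarrow\gam$ and $\ga^{--}\downarrow\omm$, the domains $i\cC_{\pi/2-\ga^{--}}$ exhaust $i\cC_{\pi/2-\gam}$. Because this analytic function equals $\phipq(\xi)$ for $q>0$ by \lemm{lem:stableWHF}, uniqueness of analytic continuation identifies it with the continuation of $\phipq$ given in \theor{tm:anal_psi_st}(a), proving (a); (b) is symmetric.

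For (c),(d) only the $q$-dependence is at issue, and here $\al\in(1,2)$, so both \lemm{q_xi_cone}(a) and (b) apply. Choosing $\omp\in(0,\gap)$, so that $\cL^+_{\omp}\subset\cC_{\gam,\gap}$, I would rewrite the integrand of \eq{amq12} using $\psi_{st}(\eta)=\psi^0_{st}(\eta)-i\mu\eta$ as $\eta^{-1}\ln\bigl((q+\psi_{st}(\eta))/(q+\psi^0_{st}(\eta))\bigr)$. Both numerator and denominator avoid $(-\infty,0]$ for $q\in\sg+\cC_{\pi/2+\ga_0}$ and $\eta\in\cL^+_{\omp}$ (for $\psi_{st}$ by \lemm{q_xi_cone}(a), for $\psi^0_{st}$ by \lemm{q_xi_cone}(b)), and the same argument-separation estimate shows their ratio stays off the cut, so the logarithm is analytic in $q$. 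Since $\al>1$, the ratio is $1+O(|\eta|^{1-\al})$ as $\eta\to\infty$ and $1+O(|\eta|)$ as $\eta\to0$, whence the integrand is $O(|\eta|^{-\al})$ at infinity and $O(1)$ near $0$, giving a locally uniformly convergent, hence analytic, integral. Agreement with $a^-_q$ for $q>0$ comes from \lemm{asympWHF}, and uniqueness of analytic continuation finishes (c); (d) is identical with $\cL^-_{\omm}$ and $a^+_q$.
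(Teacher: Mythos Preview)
Your argument follows the same route as the paper --- define the continuation by the contour integrals \eq{phip1}, \eq{phim1}, \eq{amq12}, \eq{apq12}, check analyticity in $(q,\xi)$ (resp.\ $q$), and let the contour angle vary to exhaust the stated domain --- only with considerably more care; the paper's proof simply asserts that analyticity ``follows from \eq{asym_1}'' and does not discuss the logarithm branch, the integrability estimates, or the identification step at all.

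One small caveat on the branch argument: the cone bound $|\arg\psi_{st}(\eta)|<\pi/2-\de$ for \emph{all} $\eta\in\cC_{\gam,\gap}$ that you import from the proof of \lemm{q_xi_cone}(a) is only justified there as an asymptotic statement; when $\al\in(1,2)$ and $\mu\neq0$ it actually fails for small $|\eta|$, since $\psi_{st}(\eta)\sim -i\mu\eta$ has argument approaching $\pm\pi/2$. This does not damage your conclusion: for such $\eta$ one has $|\psi_{st}(\eta)/q|<1$ (because $|q|$ is bounded below on $\sg+\cC_{\pi/2+\ga_0}$), so $\Re\bigl(1+\psi_{st}(\eta)/q\bigr)>0$ and the principal logarithm is still available. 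An even cleaner fix is to set $\ln(1+\psi_{st}(\eta)/q):=\mathrm{Log}(q+\psi_{st}(\eta))-\mathrm{Log}\,q$, which needs only \eq{asym_1} together with $q\notin(-\infty,0]$, both of which are immediate on $\sg+\cC_{\pi/2+\ga_0}$.
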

\begin{proof} (a) By Lemma \ref{q_xi_cone}, the condition in Lemma \ref{q_xi_cone} (a)  implies that there exist $\sg, \ga_0, -\pi/2<\gam<0<\gap<\pi/2$ such that \eq{asym_1} holds. Take $\omm\in (\gam,0)$, and, for $q\in \sg+\cC_{\pi/2+\ga_0}$ and $\xi$ above $\cL^-_{\omm}$, define $\phipq(\xi)$ by \eq{phip1}. It follows from \eq{asym_1} that $\phipq(\xi)$ is analytic on $\{q\in \sg+\cC_{\pi/2+\ga_0},
\xi\in i\cC_{\pi/2-\omm}\}$. It remains to note that $\omm\in (\gam,0)$ is arbitrary.

(b) The proof is by symmetry.

(c,d) If $\psi_{st}$ satisfies the condition in Lemma \ref{q_xi_cone} (a), then  $\psi^0_{st}$ satisfies this condition  as well. Hence, there exist $\sg, \ga_0, -\pi/2<\gam<0<\gap<\pi/2$ such that   \eq{asym_1} holds for $\psi_{st}$ and $\psi^0_{st}$.

\end{proof}

\section{Integral representations}\label{int_repr}
In the remainder of the main body of the paper, we assume that either  $\al\neq 1$ or $\al=1$ and $c_+=c_-$, and we use the representation \eq{eq:asqpmq}. We exclude the case of asymmetric stable L\'evy processes of order $\al=1$ because
of the irregular behavior of the Wiener-Hopf factors. 

\subsection{Calculation of the cpdf of $\barX_T$}\label{ss:cpdf_barXT}
Let $a\ge x$, and $T>0$. We calculate 
\bbe\label{cpdfsup0}
V(T,a;x)=\bE[\bfo_{x+\barX_T\le a}]=1-\bE[\bfo_{x+\barX_T> a}]=1-\bE[\bfo_{\barX_T> a-x}].
\ee
Take $\sg>0$ and use the Bromwich integral: 
\beqa\label{VcpdfbarX}
\bE[\bfo_{x+\barX_T> a}]&=&\frac{1}{2\pi i}\int_{\Re q=\sg}q^{-1}(\cEpq \bfo_{(a,+\infty)})(x). 
\eqa
On the strength of Lemma \ref{lem_supp} (a), we can replace $\cEpq$ with $\cEpq-(a^+_qI+(1-a^+_qI^-_1)$. The symbol of the modified operator
$\phi^+_{q,mod}(\xi_1):=\phipq(\xi_1)-(a^+_q+(1-a^+_q)/(1+i\xi_1))$ is $O(|\xi_1|^{\min\{1,\al\}})$ as $\xi_1\to 0$, and $O(|\xi_1|^{-\de_+})$ as $\xi_1\to\infty$, where $\de_+>0$ is from \eq{eq:asqpmq},
and $\widehat{\bfo_{(a,+\infty)}}(\xi_1)=e^{-ia\xi_1}/(i\xi_1)$.
Therefore, for $x\le a$,
\bbe\label{cpdfsup}
(\cEpq \bfo_{(a,+\infty)})(x)=\frac{1}{2\pi}\int_\bR e^{i(x-a)\xi_1}\frac{\phipq(\xi_1)-(a^+_q+(1-a^+_q)(1+i\xi_1)^{-1})}{i\xi_1}d\xi_1,
\ee where for any $\ga\in (0,\pi/2)$, the integrand 
is uniformly bounded on $\{\mathrm{arg}\, \xi_1\in [-\pi, -\pi+\ga]\cup [-\ga,0]\}$ and
 decays as $|\xi_1|^{-1-\de_+}$ as
$\xi_1\to \infty$ in $\{\xi_1\ |\ \mathrm{arg}\, \xi_1\in [-\pi, -\pi+\ga]\cup [-\ga,0]\}$.

\subsubsection{Numerical realization based on GWR algorithm}\label{sss:cpdf_barXT_numer_GWR}
For each $q>0$ used in GWR algorithm, we calculate $(\cEpq \bfo_{(a,+\infty)})(x), x\le a$.  Since $x-a\le 0$, we can take any $\omm\in (-\pi/2,0)$, and deform
the contour of integration into $\cL^-_{\omm}=e^{i\omm}\bR_+\cup e^{i(\pi-\omm)}\bR_+$: 
\bbe\label{cpdfsup2}
(\cEpq \bfo_{(a,+\infty)})(x)=\frac{1}{2\pi}\int_{\cL^-_{\omm}} e^{i(x-a)\xi_1}\frac{\phipq(\xi_1)-(a^+_q+(1-a^+_q)(1+i\xi_1)^{-1})}{i\xi_1}d\xi_1.
\ee
We use  $\overline{\phipq(\xi)}=\phipq(-\bar\xi)$ and $\overline{e^{i(x-a)\xi}}=e^{-i(x-a)\bar\xi}$ to obtain
\bbe\label{cpdfsup3}
(\cEpq \bfo_{(a,+\infty)})(x)=\frac{1}{\pi}\Re\int_{e^{i\omm}\bR_+} e^{i(x-a)\xi}\frac{\phipq(\xi)-(a^+_q+(1-a^+_q)(1+i\xi)^{-1})}{i\xi}d\xi.
\ee
Then we change the variable $\xi=\xi(y)=e^{i\omm+y}$
\bbe\label{cpdfsup4}
(\cEpq \bfo_{(a,+\infty)})(x)=\frac{1}{\pi}\Im\int_{-\infty}^{+\infty} e^{i(x-a)\xi(y)}
(\phipq(\xi(y))-a^+_q-(1-a^+_q)/(1+i\xi(y)))dy,
\ee
and evaluate the integral on the RHS of \eq{cpdfsup4} using the simplified trapezoid rule. We apply the bounds and constructions analogous to the ones 
in \cite{ConfAccelerationStable}, where the pdf and cpdf of stable distributions were efficiently evaluated. In order to find the number of terms in the simplified trapezoid rule sufficient to achieve the target precision, we use the bound for $e^{i(x-a)\xi(y)}\phi^+_{q,mod}(\xi(y))$: 
\beqa\label{eq:asymp0cpdfminf}
e^{i(x-a)\xi(y)}\phi^+_{q,mod}(\xi(y))&=&O(e^{\min\{\al,1\}y}), \ y\to-\infty,\\\label{eq:asymp0cpdfpinf}
e^{i(x-a)\xi(y)}\phi^+_{q,mod}(\xi(y))&=&O(e^{-\min\{\de,1\}y}\cdot e^{\sin(\omm)(a-x)e^y}), \ y\to+\infty,
 \eqa
 where  $\de>0$ is the same as in in Lemma \ref{asympWHF}. In particular,
 if \eq{asphipmqmain} holds, $\de=\alp$.
 
 \subsubsection{Numerical realization based on the sinh-acceleration applied to the Bromwich integral}\label{sss:cpdf_barXT_numer_sinh}
 The sinh-acceleration can be applied to the Bromwich integral only if analytic continuation of $\phipq(\xi)$ w.r.t. $q$ is possible, and if we deform the contour in the formula for $(\cEpq \bfo_{(a,+\infty)})(x), x\le a$, then analytic continuation w.r.t $(q,\xi)$ must be possible.
 Assume that the condition in Lemma \ref{q_xi_cone} (a) holds. Using Lemma \ref{lem:anal_cont_WHF}, we can 
find $\sg, \ga_0, -\pi/2<\gam<0<\gap<\pi/2$ such that \eq{asym_1} holds. Then choose $\sg_\ell>\sg$,
$\om_\ell\in (0,\ga_0)$, next, choose $b_\ell>0$ such that  $\sg_\ell-b_\ell\sin\om_\ell>0$ and define the  map
$\chi_{L; \sg_\ell,b_\ell,\om_\ell}$ by \eq{eq:sinhLapl}. Choose $\omm\in (\gam,0)$. For $q=q(y')=\chi_{L; \sg_\ell,b_\ell,\om_\ell}(y')$
 on the contour $\cL^{(L)}=\chi_{L; \sg_\ell,b_\ell,\om_\ell}(R)$, and $\xi\in \cL^-_{\omm}$, 
we calculate 
 $\phi^\pm_q(\xi)$ and $a^\pm_q$ as in Lemma \ref{lem:anal_cont_WHF}, and then
 \beqa\label{VcpdfbarXsinh}
\bE[\bfo_{x+\barX_T> a}]&=&\frac{b_\ell}{2\pi }\int_{\cL^{(L)}}dy'\,e^{q(y')T}\frac{\cosh(i\om_\ell +y')}{q(y')}(\cE^+_{q(y')} \bfo_{(a,+\infty)})(x),
\eqa
where $(\cE^+_{q(y')} \bfo_{(a,+\infty)})(x)$ is given by \eq{cpdfsup2}. We apply the simplified trapezoid rule to the integral on the RHS of
\eq{VcpdfbarXsinh}, and, for each $y'$ on the grid, evaluate 
\[
(\cE^+_{q(y')} \bfo_{(a,+\infty)})(x)=\sum_{\ga=\omm, \pi-\omm}I_\ga(y',x),
\]
where 
\bbe\label{cpdfsup5}
I_\ga(y',x)=\frac{1}{2\pi}\int_{e^{i\ga}\bR_+} e^{i(x-a)\xi_1}
\frac{\phi^+_{q(y')}(\xi_1)-(a^+_{q(y')}+(1-a^+_{q(y')})(1+i\xi_1)^{-1})}{i\xi_1}d\xi_1
\ee
(for each $\ga$, the direction is from the left to the right), making the change of variables $\xi=e^{i\ga +y}$ on the RHS of \eq{cpdfsup5}, and then applying the simplified trapezoid rule.
 
 \subsection{Calculation of the joint cpdf of $X_T$ and $\barX_T$}\label{ss:joint_stable}
Let $a_2>0, a_1\le a_2$, $x_1\le x_2\le a_2$,  and $T>0$. Consider 
\[V(T,a_1,a_2;x_1,x_2)=
\bE[\bfo_{(-\infty,a_1]}(x_1+X_T)\bfo_{(-\infty,a_2]}(\max\{x_2, x_1+\barX_T\})].
\] If $a_1=a_2$, we have the cpdf of $\barX_T$,
which we considered in Section \ref{ss:cpdf_barXT}. Hence, we assume that $a_1<a_2$. We have
$
V(T,a_1,a_2;x_1,x_2)=\bE[\bfo_{(-\infty,a_1]}(x_1+X_T)]-V_1(T,a_1,a_2;x_1,x_2)$,
where \[V_1(T,a_1,a_2;x_1,x_2)=\bE[\bfo_{(-\infty,a_1]}(x_1+X_T)\bfo_{(a_2,+\infty)}(\max\{x_2, x_1+\barX_T\})].\]
Function $a_1\mapsto \bE[\bfo_{(-\infty,a_1]}(x_1+X_T)]$ is the cpdf of the stable L\'evy process starting at $x_1$, for which we developed explicit efficient algorithms in \cite{ConfAccelerationStable} and Section \ref{s:1D}. 
Hence, it suffices to consider $V_1(T,a_1,a_2;x_1,x_2)$. Using \eq{perp_opt_X_barr2}, we obtain
\[
\tV_1(q,a_1,a_2;x_1,x_2)=q^{-1}(\cEpq\bfo_{(a_2,+\infty)}\cEmq  \bfo_{(-\infty,a_1]})(x_1).
\]
Since $a_2>a_1$ and $x_1\le a_2$, Lemma \ref{lem_supp} allows us to replace 
$\bfo_{(a_2,+\infty)}\cEmq  \bfo_{(-\infty,a_1]}$ and $\cEpq\bfo_{(a_2,+\infty)}$
with $\bfo_{(a_2,+\infty)}\cE^{-}_{q,mod}\bfo_{(-\infty,a_1]}$ and   
$\cE^{+}_{q,mod}\bfo_{(a_2,+\infty)}$, respectively, where 
\[
\cE^{-}_{q,mod}=\cEmq-(a^-_qI+(1-a^-_q)I^+_1), \
\cE^{+}_{q,mod}=\cEpq-(a^+_qI+(1-a^+_q)I^-_1).
\]
We have $\widehat{\bfo_{(-\infty,a_1]})}(\xi_1)=e^{-ia_1\xi_1}/(-i\xi_1)$, and the symbols of the modified EPV operators
$\phi^+_{q,mod}(\xi_1):=\phipq(\xi_1)-(a^+_q+(1-a^+_q)/(1+i\xi_1))$ and 
$\phi^-_{q,mod}(\xi_1):=\phimq(\xi_1)-(a^-_q+(1-a^-_q)/(1-i\xi_1))$
are $O(|\xi_1|^{\min\{1,\al\}})$ as $\xi_1\to 0$, and $O(|\xi_1|^{-\de_\pm})$ as $\xi_1\to\infty$, where $\de_\pm>0$ are from \eq{eq:asqpmq}.
Therefore, we can use the pdo representation of the modified EPV operators
to calculate, for $y\ge a_2$,
\[
(\cE^{-}_{q,mod}\bfo_{(-\infty,a_1]})(y)=\frac{1}{2\pi}\int_{\bR}e^{i(y-a_1)\xi_1}\frac{\phi^-_{q,mod}(\xi_1)}{-i\xi_1}d\xi_1.
\]
Let $q>0$. Since $y-a_1>0$, we can deform the contour of integration: for any $\omp\in (0,\pi/2)$,
\[
(\cE^{-}_{q,mod}\bfo_{(-\infty,a_1]})(y)=\frac{1}{2\pi}\int_{\cL^+}e^{i(y-a_1)\xi_1}\frac{\phi^-_{q,mod}(\xi_1)}{-i\xi_1}d\xi_1.
\]
where $\cL^+=\cL^+_{\omp}=e^{i\omp}\bR_+\cup e^{i(\pi-\omp)}\bR_+$. (In the case of $q\not\in (0,+\infty)$, we need to impose the condition in Lemma \ref{q_xi_cone} (a), and choose
$\omp$ so that $q+\psi(\xi_1)\neq 0$ for all $q$ and $\xi_1$ arising in the process of deformation).
Then we calculate the Fourier transform $\hat w(\eta)$ of $w:=\bfo_{(a_2,+\infty)}\cE^{-}_{q,mod}\bfo_{(-\infty,a_1]}$ for 
$\{\eta\ |\ \Im\eta\le 0, \eta\neq 0\}$;  since the integrand below decays exponentially as a function of $y$ as $y\to+\infty$, and as $|\xi|^{-1-\de_-}$ as $\xi_1\to\infty$, where $\de_->0$, Fubini's theorem is applicable, and we obtain
\beqast
\hat w(\eta)&=&\int_{a_2}^{+\infty} dy e^{-iy\eta}\frac{1}{2\pi}\int_{\cL^+}e^{i(y-a_1)\xi_1}\frac{\phi^-_{q,mod}(\xi_1)}{-i\xi_1}d\xi_1\\
&=&e^{-ia_2\eta}\frac{1}{2\pi}\int_{\cL^+}e^{i(a_2-a_1)\xi_1}\frac{\phi^-_{q,mod}(\xi_1)}{(-i\xi_1)i(\eta-\xi_1)}d\xi_1.
\eqast
Let $q>0$. Fix $\omm\in (-\pi/2,0)$, and introduce 
$\Om=\{(\xi_1,\eta)\ |\ \xi_1\in (\cL^+\setminus\{0\}), \mathrm{arg}\, \eta\in [-\pi,-\pi-\omm]\cup [\omm,0]\}$
and $\cL^-=\cL^-_{\omm}=e^{i\omm}\bR_+\cup e^{i(\pi-\omm)}\bR_+$.
Since $x_1\le a_2$, the absolute value of the integrand of the double integral 
\[\tV_1(q,a_1,a_2;x_1,x_2)
=\frac{1}{(2\pi)^2 q}\int_\bR d\eta\, e^{i(x_1-a_2)\eta}\phi^+_{q,mod}(\eta)\int_{\cL^+}e^{i(a_2-a_1)\xi_1}\frac{\phi^-_{q,mod}(\xi_1)}{\xi_1(\eta-\xi_1)}d\xi_1
\]
 is bounded by 
$C(q,\omm,\omp)g(|\xi_1|,|\eta|)$, where $C(q,\omm,\omp)$ is independent of  $(\xi_1,\eta)\in \Om$ and
\[
g(|\xi_1|,|\eta|)=(|\eta|^{-\de_+}\bfo_{|\eta|\ge 1}+|\eta|^{\min\{1,\al\}}\bfo_{|\eta|\le 1})(|\eta|+|\xi|)^{-1}
(|\xi_1|^{-1-\de_-}\bfo_{|\xi_1|\ge 1}+|\xi_1|^{\min\{0,\al-1\}}\bfo_{|\xi_1|\le 1}).
\]
Considering separately the subregions defined by the pairs of inequalities (1) $|\xi_1|\le1, |\eta|\le 1$; (2)
$|\xi_1|\ge1, |\eta|\le 1$; (3) $|\xi_1|\le1, |\eta|\ge 1$; (4) $|\xi_1|\ge1, |\eta|\ge 1$; and taking into account that 
$\de_\pm>0$ and $\al>0$, we conclude that
$g\in L_1(\bR^2)$. Hence, we can change the order of integration and obtain 
\bbe\label{tV1q} \tV_1(q,a_1,a_2;x_1,x_2)=\frac{1}{(2\pi)^2 q}\int_{\cL^-} d\eta\, e^{i(x_1-a_2)\eta}\phi^+_{q,mod}(\eta)
\int_{\cL^+}\frac{e^{i(a_2-a_1)\xi_1}\phi^-_{q,mod}(\xi_1)}{\xi(\eta-\xi_1)}d\xi_1.
\ee

\subsection{General case and example}\label{int_repr_gen} We evaluate $V(f;T;x_1,x_2)=\bE[f_+(x_1+X_T, \max\{x_2, x_1+\barX_T\})]$.
The first term on the RHS of \eq{tVq0} is calculated at the end of Section \ref{ss:WHF_gen}.  Below, we consider in detail the evaluation of the second term on the RHS of \eq{tVq0}.
The proof of the following theorem is a straightforward modification of the proof for the joint cpdf. We assume that either $q>0$ or the condition in Lemma \ref{q_xi_cone} (a) holds,
 and the parameters of the deformations are chosen so that $q+\psi_{st}(\xi_1), q+\psi_{st}(\eta)\not\in (-\infty,0]$ for all $(q,\xi_1,\eta)$ arising in the process of deformation of all contours of integration.
\begin{thm}\label{thm:gen_stable}
Let $x_1\le x_2$, and let the following conditions hold
\begin{enumerate}[(i)]
\item
$\exists$\ $\al_0>-\al$, $ \de<\de_-$, $\gap\in (0,\pi/2)$ such that $\forall$\ 
$\xi_1\in\{\xi_1\ |\  \mathrm{arg}\,\xi_1\in [0,\gap]\cup[\pi,\pi-\gap]\}$,
\bbe\label{boundf1}
|e^{ix_2\xi_1}\widehat{(f_+)}(\xi_1,x_2)|\le C_1(x_2)(|\xi_1|^{\al_0-1}\bfo_{|\xi_1|\le 1}+|\xi_1|^{-1+\de}\bfo_{|\xi_1|\ge 1}),
\ee 
where $C_1(x_2)$ depends on $x_2$ but not on $\xi_1$;
\item
$\exists$\ $\al_0>-\al$, $ \de<\de_+$, $\gam\in (-\pi/2,0)$ such that $\forall$\  
$\eta\in\{\eta\ |\ \mathrm{arg}\,\eta\in [\gam,0]\cup[-\pi,-\pi-\gam]\}$,
\bbe\label{boundw1}
\left|e^{ix_2\eta}\int_{x_2}^{+\infty} dy\, e^{iy\eta}((\cE^-_{q,mod}\otimes I)f_+)(y,y)\right|\le C_2(x_2)(|\eta|^{\al_0-1}\bfo_{|\eta|\le 1}+|\eta|^{-1+\de}\bfo_{|\eta|\ge 1}),
\ee 
where $C_2(x_2)$ depends on $x_2$ but not on $\eta$.
\end{enumerate}
Then, for any $x_1\le x_2$, $\omm\in (\gam,0)$ and $\omp\in (0,\gap)$, the second term on the RHS of \eq{tVq0}
admits the representation
\beqa\label{eq:gen_stable}
&&(\cE^+_{q,mod}\bfo_{(x_2,+\infty)}(y)(((\cE^-_{q,mod}\otimes I)f_+)(y,y)-
((\cE^-_{q,mod}\otimes I)f_+)(y,x_2)))(x_1)\\\nonumber
&=&\frac{1}{2\pi}\int_{\cL^-_{\omm}} d\eta\, e^{i(x_1-x_2)\eta}\phi^+_{q,mod}(\eta)
\int_{x_2}^{+\infty} dy\, e^{i(x_2-y)\eta}\\\nonumber
&&\cdot\frac{1}{2\pi}\int_{\cL^+_{\omp}} e^{iy\xi_1}\phi^-_{q,mod}(\xi_1)
(\widehat{(f_+)}_1(\xi_1,y)-\widehat{(f_+)}_1(\xi_1,x_2)) d\xi_1.
\eqa
\end{thm}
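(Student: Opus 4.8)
The plan is to mirror, essentially verbatim, the derivation of the joint cpdf in Section~\ref{ss:joint_stable}, reading off the data $w(f;q,\cdot,x_2)$ from \eq{eq:wqVtq} in place of the specific indicators used there. First I would start from the second term $(\cEpq w(f;q,\cdot,x_2))(x_1)$ of \eq{tVq0} and replace the three expected present value operators by their regularized counterparts $\cE^\pm_{q,mod}=\cE^\pm_q-(a^\pm_q I+(1-a^\pm_q)I^\mp_1)$. The validity of this substitution is exactly Lemma~\ref{lem_supp}: the outer factor $\bfo_{[x_2,+\infty)}(y)$ makes $w$ vanish for $y<x_2$, so by part~(a) the subtracted part of $\cEpq$ does not contribute at the evaluation point $x_1\le x_2$; and $f_+$ is supported in $U_+=\{x_1\le x_2\}$, so $f_+(\cdot,x_2)$ and $f_+(\cdot,y)$ vanish to the right of $x_2$ and of $y$ respectively, whence by part~(b) the subtracted part of $\cEmq$ drops out of both inner terms $((\cEmq\otimes I)f_+)(y,y)$ and $((\cEmq\otimes I)f_+)(y,x_2)$. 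This is the analogue of the step ``$a_2>a_1$'' used for the joint cpdf.

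Next I would insert the pdo (symbol) representation of the inner operator, writing $((\cE^-_{q,mod}\otimes I)f_+)(y,z)=\frac{1}{2\pi}\int_\bR e^{iy\xi_1}\phi^-_{q,mod}(\xi_1)\widehat{(f_+)}_1(\xi_1,z)\,d\xi_1$ for $z\in\{y,x_2\}$, where $\widehat{(f_+)}_1$ is the partial Fourier transform in the first variable. The symbol $\phi^-_{q,mod}$ vanishes like $|\xi_1|^{\min\{1,\al\}}$ at $0$ by \eq{asphim0} and like $|\xi_1|^{-\de_-}$ at infinity by \eq{eq:asqpmq}, and under hypothesis~(i) the factor $\widehat{(f_+)}_1(\xi_1,\cdot)$ is analytic in the cone $\mathrm{arg}\,\xi_1\in[0,\gap]\cup[\pi,\pi-\gap]$ with the stated bound; together with the analyticity of $\phi^-_{q,mod}$ off $i\bR$ (Theorem~\ref{tm:anal_psi_st}, and Lemma~\ref{lem:anal_cont_WHF} when $q$ is complex) this lets me push the contour from $\bR$ to $\cL^+_{\omp}$ by Cauchy's theorem, the arcs at $0$ and $\infty$ vanishing because of the two-sided symbol decay. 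The difference of the two inner terms then becomes the innermost $\xi_1$-integral on the RHS of \eq{eq:gen_stable}. I would then apply the symbol representation of the outer operator $\cE^+_{q,mod}$ to $w$, deform its line of integration to $\cL^-_{\omm}$ (now using hypothesis~(ii) together with the analyticity and decay of $\phi^+_{q,mod}$), and peel off the factor $e^{ix_2\eta}$ so that $\hat w(\eta)=\int_{x_2}^{+\infty}e^{-iy\eta}(\cdots)\,dy$ appears as the middle $y$-integral weighted by $e^{i(x_2-y)\eta}$; bookkeeping of the exponentials recombines $e^{-ix_2\eta}e^{ix_1\eta}=e^{i(x_1-x_2)\eta}$ and the two prefactors $(2\pi)^{-1}$, matching the stated form exactly.

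The one genuinely technical step, as in the joint cpdf case, is the Fubini justification allowing the $\eta$-, $y$- and $\xi_1$-integrations to be arranged into the stated nested order. For this I would bound the full triple integrand in absolute value over $\cL^-_{\omm}\times[x_2,+\infty)\times\cL^+_{\omp}$ and show it is integrable: the $y$-integral converges because along the nontrivial ray of $\cL^-_{\omm}$ one has $\Im\eta<0$, producing exponential decay in $y$, which is precisely what hypothesis~(ii) packages as an $\eta$-bound of the form $(|\eta|^{\al_0-1}\bfo_{|\eta|\le1}+|\eta|^{-1+\de}\bfo_{|\eta|\ge1})$; the remaining $(\eta,\xi_1)$-integrability then follows from a region-by-region estimate identical in spirit to the four-case analysis that established $g\in L_1(\bR^2)$ for the joint cpdf, combining $\de_\pm>0$, $\al>0$, $\al_0>-\al$ and $\de<\de_\pm$. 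The compatibility conditions $\omm\in(\gam,0)$, $\omp\in(0,\gap)$ and, for complex $q$, $q+\psi_{st}(\xi_1),\,q+\psi_{st}(\eta)\notin(-\infty,0]$ throughout the deformation guarantee that every logarithm and every Wiener--Hopf factor stays well defined, so no new obstruction arises beyond this absolute-integrability bookkeeping.
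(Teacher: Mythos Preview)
Your proposal is correct and follows precisely the route the paper intends: the paper states only that the proof is ``a straightforward modification of the proof for the joint cpdf,'' and what you have written is exactly that modification, with the same use of Lemma~\ref{lem_supp} to pass to $\cE^\pm_{q,mod}$, the same pdo representation and contour deformations to $\cL^+_{\omp}$ and $\cL^-_{\omm}$, and the same Fubini-type integrability check (now for a triple integral since the $y$-integration is not performed explicitly in the general case). Your handling of the support arguments for Lemma~\ref{lem_supp} and the region-by-region $L_1$ bound are the expected adaptations.
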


\begin{example}\label{ex:exchange_stable}{\rm For $\be>1$, $\la>0$, set 
$f(x_1,x_2):=f(\la,\be;x_1,x_2)=(\be x_1-x_2)_+ \bfo_{x_1\le x_2}e^{-\la x_2}$. We introduce the dampening factor $e^{-\la x_2}$ so that
$V(f(\la,\be;\cdot,\cdot);T;x_1,x_2)$ be finite for any $\al\in (0,2)$. By inspection of the integral representation for $\tV(f,q,x_1,x_2)$ 
that we derive, it is seen that if $\al>1$, the integrands are uniformly bounded by an absolutely integrable
function independent of $\la$, and converge point-wise as $\la\to0$. Hence,
the limits $\tV(f(0+,\be;\cdot,\cdot); q,x_1,x_2)$ and $V(f(0+,\be;\cdot,\cdot); T,x_1,x_2)$  exist and and are given by
the same integrals with $\la=0$. 

The function
\beqast
\widehat{(f_+)}_1(\xi_1,x_2)&=&e^{-\la x_2}\int_{x_2/\be}^{x_2} e^{-ix_1\xi_1}(\be x_1-x_2)dx_1\\
&=&\frac{e^{-\la x_2}}{-i\xi_1}\left[(\be-1)x_2e^{-ix_2\xi_1}-\be\int_{x_2/\be}^{x_2} e^{-ix_1\xi_1}d\xi_1\right]\\
&=&e^{-\la x_2}\left([(-i\xi_1)^{-1}(\be-1)x_2+\xi_1^{-2}\be] e^{-ix_2\xi_1}-\be\xi_1^{-2}e^{-ix_2\xi_1/\be}\right)
\eqast
satisfies \eq{boundf1} with $\al_0=1, \de=0$.  
For $y>x_2$, $e^{i(y-x_2)\xi_1}$ is uniformly bounded in
the upper half-plane, and exponentially decays as $\xi_1\to \infty$ along any ray $e^{i\omp}\bR_+$, $\omp\in (0,\pi)$. Since 
$1/\be<1$, $e^{i(y-x_2/\be)\xi_1}$ enjoys the same property since $x_2\ge 0$.
Let $q>0$. We take $\omp\in (0,\pi/2)$, and calculate 
\beqast 
&&\cF_{y\to\eta}\bfo_{(x_2,+\infty)}(y)((\cE^-_{q,mod}\otimes I)f_+)(y,x_2)\\
&=&\int_{x_2}^{\infty}dy\, e^{-(\la+i\eta)y}\frac{1}{2\pi}\int_\bR \phi^-_{q,mod}(\xi_1)\left[e^{i(y-x_2)\xi_1}
\left(\frac{(\be-1)x_2}{-i\xi_1}+\frac{\be}{\xi_1^2}\right)-e^{i(y-x_2/\be)\xi_1}\frac{\be}{\xi_1^2}\right] d\xi_1\\
&=&\int_{x_2}^{\infty}dy\, e^{-(\la+i\eta)y}\frac{1}{2\pi}\int_{\cL^+_{\omp}} \phi^-_{q,mod}(\xi_1)\left[e^{i(y-x_2)\xi_1}
\left(\frac{(\be-1)x_2}{-i\xi_1}+\frac{\be}{\xi_1^2}\right)-e^{i(y-x_2/\be)\xi_1}\frac{\be}{\xi_1^2}\right] d\xi_1
\eqast
For $\eta\in \{\Im\eta\le 0, \eta\neq 0\}$, the integrand decays exponentially w.r.t. $\xi_1$ and $y$, hence, the double integral is absolutely convergent. Applying Fubini's theorem, and calculating the integral w.r.t. $y$, we continue
\beqast
&=&\frac{e^{-(\la+i\eta)x_2}}{2\pi}\int_{\cL^+_{\omp}} \frac{\phi^-_{q,mod}(\xi_1)}{\la+i(\eta-\xi_1)}\left[
\left(\frac{(\be-1)x_2}{-i\xi_1}+\frac{\be}{\xi_1^2}\right)-e^{ix_2(1-1/\be)\xi_1}\frac{\be}{\xi_1^2}\right] d\xi_1.
\eqast
Similarly,
\beqast
&&\cF_{y\to\eta}\bfo_{(x_2,+\infty)}(y)((\cE^-_{q,mod}\otimes I)f_+)(y,y) \\
&=&\int_{x_2}^{\infty}dy\, e^{-(\la+i\eta)y}\frac{1}{2\pi}\int_{\cL^+_{\omp}} \phi^-_{q,mod}(\xi_1)\left[
\left(\frac{(\be-1)y}{-i\xi_1}+\frac{\be}{\xi_1^2}\right)-e^{iy(1-1/\be)\xi_1}\frac{\be}{\xi_1^2}\right] d\xi_1\\
&=&\frac{e^{-(\la+i\eta) x_2}}{2\pi}\int_{\cL^+_{\omp}} \phi^-_{q,mod}(\xi_1)\left[
\left(\left(\frac{1}{\la+i\eta}-\frac{1}{(\la+i\eta)^2}\right)\frac{(\be-1)}{-i\xi_1}+\frac{1}{\la+i\eta}\frac{\be}{\xi_1^2}\right) 
\right.\\&& \hskip2cm\left. 
-\frac{\be e^{ix_2(1-1/\be)\xi_1}}{\xi_1^2(\la+i(\eta-(1-1/\be)\xi_1))}\right] d\xi_1.
\eqast
Substituting into \eq{eq:gen_stable}, we obtain
\beqa\label{eq:gen_stable_example}
&&(\cE^+_{q,mod}\bfo_{(x_2,+\infty)}(y)(((\cE^-_{q,mod}\otimes I)f_+)(y,y)-
((\cE^-_{q,mod}\otimes I)f_+)(y,x_2)))(x_1)\\\nonumber
&=&\frac{e^{-\la x_2}}{(2\pi)^2}\int_{\cL^-_{\omm}} d\eta\, e^{i(x_1-x_2)\eta}\phi^+_{q,mod}(\eta)
\int_{\cL^+_{\omp}}d\xi_1\,\phi^-_{q,mod}(\xi_1)\\\nonumber
&&\cdot\left[\left(\frac{1}{\la+i\eta}-\frac{1}{(\la+i\eta)^2}\right)\frac{(\be-1)}{-i\xi_1}-\frac{(\be-1)x_2}{-i\xi_1(\la+i(\eta-\xi_1)}
+\frac{\be}{i\xi_1(\la+i\eta)(\la+i(\eta-\xi_1))}\right.\\\nonumber
&&\left. +\frac{\be e^{ix_2(1-1/\be)\xi_1}(1-1/\be)}{-i\xi_1(\la+i(\eta-(1-1/\be)\xi_1)(\la+i\eta)}\right]\\\nonumber
&=&\frac{e^{-\la x_2}}{(2\pi)^2}\int_{\cL^-_{\omm}} d\eta\, e^{i(x_1-x_2)\eta}\phi^+_{q,mod}(\eta)
\int_{\cL^+_{\omp}}d\xi_1\,\frac{\phi^-_{q,mod}(\xi_1)}{-i\xi_1}\\\nonumber
&&\cdot\left[\left(\frac{1}{\la+i\eta}-\frac{1}{(\la+i\eta)^2}\right)(\be-1)-\frac{(\be-1)x_2}{\la+i(\eta-\xi_1)}
-\frac{\be}{(\la+i\eta)(\la+i(\eta-\xi_1))}\right.\\\nonumber
&&\left. +\frac{\be e^{ix_2(1-1/\be)\xi_1}(1-1/\be)}{(\la+i(\eta-(1-1/\be)\xi_1)(\la+i\eta)}\right]\
\eqa

}
\end{example}

\section{Numerical evaluation of the cpdf of the supremum process and joint cpdf}\label{numer_real}

\subsection{Cpdf of the supremum process}\label{ss:supremum_numer:main_blocks}
Evaluation of  $\bE[\bfo_{x+\barX_T\le a}]$ is reducible to evaluation of \eq{cpdfsup4} (see \eq{cpdfsup0}). The integral on
the RHS of  \eq{cpdfsup4} is evaluated  using the simplified trapezoid rule exactly as the outer integral in the case of the joint cpdf is evaluated in Sect. \ref{ss:joint_stable_numer}.  The Laplace inversion is performed exactly as in the case of the joint cpdf.

 \subsection{Joint cpdf of $X_T$ and $\barX_T$: numerical realization based on GWR algorithm}\label{ss:joint_stable_numer}
 We use $V(T,a_1,a_2;x_1,x_2)=\bE[\bfo_{(-\infty,a_1]}(x_1+X_T)]-V_1(T,a_1,a_2;x_1,x_2)$. Detailed algorithms for and numerous examples of numerical evaluation of $\bE[\bfo_{(-\infty,a_1]}(x_1+X_T)]$ can be found in
 \cite{ConfAccelerationStable}, therefore, it suffices to provide an algorithm for numerical evaluation of $V_1$; the Laplace transform
 $ \tV_1(q,a_1,a_2;x_1,x_2)$ is given by \eq{tV1q}. We proved \eq{tV1q} assuming that  
  either $\al\neq 1$ or $\al=1$ and $c_-=c_+$. If $\al=1$ and $c_+\neq c_-$, then one can use \eq{tV1q} with $\phi^\pm_q$ instead
  of the modified $\phi^\pm_{q,mod}$ but then the convergence of the integrals and infinite sums in the simplified trapezoid rule 
  are worse, longer grids are needed, and errors are more difficult to control.

 Set $\omp=\pi/8, \omm=-\pi/8$, construct grids
  \beqa\label{plus:grids}
 \vec{y^{+}}&=&\ze^+*(-N^+_-, N^+_+),\ \vec{\xi^{+-}}=\exp[i*(\pi-\omp)+ \vec{y^{+}}],\ \vec{\xi^{++}}=\exp[i*\omp+ \vec{y^{+}}],
 \\\label{minus:grids}
 \vec{y^{-}}&=&\ze^-*(-N^-_-, N^-_+),\ \vec{\xi^{--}}=\exp[i*(-\pi-\omm)+ \vec{y^{-}}],\ \vec{\xi^{-+}}=\exp[i*\omm+ \vec{y^{-}}];
 \eqa
and
 calculate the arrays $\psi_{st}(\vec{\xi^{+-}}), \psi_{st}(\vec{\xi^{++}}), \psi_{st}(\vec{\xi^{--}}), \psi_{st}(\vec{\xi^{-+}})$. 
 
 Then,
 for each $q>0$ used in GWR algorithm, calculate
 \begin{enumerate}[(I)]
  \item
 the Wiener-Hopf factors $\phimq(\vec{\xi^{--}})$ and $\phimq(\vec{\xi^{-+}})$ and $a^-_q\ge 0$ using the formulas in Lemma \ref{asympWHF}; the integrals over the rays $e^{i\omp}\bR_+$ and $e^{i(\pi-\omp)}\bR_+$ are calculated using the exponential changes
 of variables and simplified trapezoid rule, the grids \eq{plus:grids} being used;
 \item 
 the Wiener-Hopf factors $\phipq(\vec{\xi^{+-}})$ and $\phipq(\vec{\xi^{++}})$ and $a^+_q\ge 0$ using the formulas in Lemma \ref{asympWHF}; the integrals over the rays $e^{i\omm}\bR_+$ and $e^{i(-\pi-\omm)}\bR_+$ are calculated using the exponential changes
 of variables and simplified trapezoid rule, the grids \eq{minus:grids} being used;
 \item
 arrays $\phipq(\vec{\xi^{--}})$, $\phipq(\vec{\xi^{-+}})$ and $\phimq(\vec{\xi^{+-}})$, $\phimq(\vec{\xi^{++}})$ using the arrays
 in (I) and (II) and the Wiener-Hopf identity;
 \item
 arrays  $\phi^+_{q,mod}(\vec{\xi^{--}})$, $\phi^+_{q, mod}(\vec{\xi^{-+}})$, 
 $\phi^-_{q,mod}(\vec{\xi^{+-}})$ and $\phi^-_{q, mod}(\vec{\xi^{++}})$;
 \item
 making the exponential changes of variables on each of four arrays comprising the contours in the double integral on the right-most side  of
 \eq{tV1q} and applying the simplified trapezoid rule to each of 4 integrals, calculate $\tV_1(q,a_1,a_2;x_1,x_2)$.
  \end{enumerate}
  Finally, apply GWR algorithm to calculate $V_1(T,a_1,a_2;x_1,x_2)$. 
  
 
 \subsection{Sinh-acceleration in the Bromwich integral}\label{ss:SINH-Bromwich}
Let the condition in Lemma \ref{q_xi_cone} (a) hold. Then we find $\sg>0, \ga_0\in (0,\pi/2)$ and $\gam<0<\gap$ as indicated in Lemma \ref{lem:anal_cont_WHF}.
Next, we choose  $\sg_\ell>\sg$,
$\om_\ell\in (0,\ga_0)$, and $b_\ell>0$ such that  $\sg_\ell-b_\ell\sin\om_\ell>0$ and define the  map
$\chi_{L; \sg_\ell,b_\ell,\om_\ell}$ by \eq{eq:sinhLapl}. In the Bromwich integral for $V_1(T, a_1,a_2;x_1,x_2)$, we make the change
of variables $q=q(y')=\chi_{L; \sg_\ell,b_\ell,\om_\ell}(y')$, and choose the grid $\vec{y'}=(j\ze)_{j=-N}^N$ for the simplified trapezoid rule.
We calculate the factors $\tV_1(q(j\ze),a_1,a_2;x_1,x_2)$ in the approximation
\bbe\label{sinhBrCPDF}
V_1(T,a_1,a_2;x_1,x_2)\approx \frac{b_\ell\ze}{\pi}\Re \sum_{j=0}^N e^{q(j\ze)T}\cosh(i\om_\ell+j\ze))
\tV_1(q(j\ze),a_1,a_2;x_1,x_2)(1-\de_{j0}/2)
\ee
as for $q>0$ above using $\omp=\gap/2$ and $\omm=\gam/2$ and Lemma \ref{lem:anal_cont_WHF}
instead of Lemma \ref{asympWHF}.

 \subsection{Numerical examples}\label{ss:numer_ex}
 The calculations in the paper
were performed in MATLAB 2017b-academic use, on a MacPro Chip Apple M1 Max Pro chip
with 10-core CPU, 24-core GPU, 16-core Neural Engine 32GB unified memory,
1TB SSD storage.

The CPU times shown in the tables in Sect. \ref{s:tables} can be significantly improved 
using 
parallelization, especially in the blocks where calculations are performed for each $q$ in the Laplace inversion procedure, and
 asymptotic expansions and summation by parts (similarly to the ones used to evaluate cpdf of $X_T$ in Sect. \ref{s:1D} and \cite{ConfAccelerationStable}) in the blocks where the Wiener-Hopf factors. Note that when the cpdf of $\barX_T$ is evaluated, the CPU time for the evaluation of the Wiener-Hopf factors is 80\% of the total CPU time, approximately. 
As one would expect, the length of arrays and CPU time depends on parameters of the process, $T$, the barrier $a_2$ and distance to barrier $a_2-a_1$. Hence, the comprehensive list of examples would be extremely long; we choose examples to show that if
the parameters of the process, $T$ and $a_2$, $a_2-a_1$ are in a not very favorable region of the parameter space, the method
of the paper can achieve good precision fairly fast. If the sinh-acceleration is applicable to the Bromwich integral,  we achieve the precision 
better than E-15 in seconds in the case of the joint distribution, and in a fraction of a second for cpdf of the supremum process.
The precision of the order E-10 is achievable in hundreds and dozens of milliseconds, respectively. When GWR acceleration is applied, we fix the order $2M=16$ and choose other parameters of the numerical scheme so that the order of the errors does not improve if we choose longer and finer grids. Thus, the errors shown are, essentially, the errors of GWR acceleration itself. If $\al<1$ and $\mu\neq 0$, then we can apply only GWR acceleration, and we show the differences between the results of two schemes with the integration over different curves. 

As in the case of the evaluation of cpdf of $X_T$, the length of grids and CPU time needed to satisfy a chosen error tolerance greatly 
increases if $\be\neq 0$ and $\al$  is close to 0 or 1; the effect depends on the sign of $\be$. In the examples below, $\al=0.2$ and
$\al=1.2$ are
close but not very close to 0 and 1, and $\be=-0.2$ and $\mu=-0.02$, $\mu=0$ and $\mu=0.02$ are not large in absolute value.
In the majority of tables, $T=0.25$ is moderate, and the upper boundary $a$ for the supremum process is small, in the range $[0.0125, 0.075]$.
In Tables 4-6, $T=0.004, 1, 10$; in Tables 5-6, $a\in [100,600]$ and $a\in [0.5,3]$, respectively. The results for the joint cpdf (Tables 6 and 7) show that
$V_1$ can be calculated more accurately than cpdf of the supremum process using the same grids although the CPU time in the former case (triple integral) is much larger than in the latter case (double integral). The reason is that $V_1$ is smoother than cpdf of the supremum process.

 \section{Conclusion}\label{concl}
 In Sect. \ref{s:1D}, we generalized the  efficient numerical method
 for evaluation of pdfs and cpdfs of stable distributions developed in \cite{ConfAccelerationStable}
 to the case of expectations $\bE[f(x+X_T)$. We also suggested 
 a new method, which can be used for if the parameters of the distribution are in the regions where the method of  \cite{ConfAccelerationStable} requires extremely  grids. 
In the following sections, we derived integral representations for expectations of functions $f(X_T, \barX_T)$ of a stable L\'evy process and the corresponding supremum process and design efficient numerical procedures for the evaluation of the resulting double and triple integrals. 
 The main block is the integral representation for the Laplace transform $\tV(q)$ of the expectation w.r.t. $T$;
 the form of the representation depends on the properties of $f$.
 The integral representations are  analogs of the representations derived in \cite{EfficientLevyExtremum}
 for L\'evy processes with exponentially decaying tails of the density of jumps.
As in  \cite{EfficientLevyExtremum}, the resulting formulas are in terms
 of the Wiener-Hopf factors $\phi^\pm_q(\xi)$. Since the characteristic exponent $\psi$ of a stable L\'evy process is non-smooth at 0,
 the justification of the integral representations in the present paper are more involved, and a novel regularization of the Wiener-Hopf factors 
 in the resulting formulas is needed. The regularization relies on a sufficiently regular behavior of $\phi^\pm_q(\xi)$ as $\xi\to\infty$.
 In the case of asymmetric processes of index 1, we were unable to derive the asymptotics of the Wiener-Hopf factors, hence,
 we have neither a rigorous proof of the integral representations in terms of $\phi^\pm_q(\xi)$ nor  representations 
 in terms of the modified Wiener-Hopf factors. The general theory of pseudo-differential operators (see, e.g., \cite{eskin,NG-MBS})
 can be used to justify the integral representations in the sense of generalized functions, and study the regularity of the integral representations. The numerical schemes that we develop can be used in this case as well but the convergence of the integrals and infinite sums can be poor without the regularization.
 
 Numerical realizations of the formulas for the Wiener-Hopf factors and integral representations are based on appropriate rotations of
 rays of integration, equivalently, on changes of variables of the form $\xi=e^{i\om+y}$; in \cite{EfficientLevyExtremum}, conformal deformations of the lines of integration and the sinh-changes of variables of the form $x\i=i\om_1+b\sinh(i\om+y)$ were used.
In repeated integrals, the changes of variables must be in a certain agreement. 
 As in  \cite{EfficientLevyExtremum}, after the changes of the variables are made, the simplified trapezoid rule is applied.
In  \cite{EfficientLevyExtremum} and the present paper, the complexity of the scheme of the evaluation of the Wiener-Hopf factors is of the order of $O((\ln(1/\eps))^2)$, where $\eps$ is the error tolerance, and  the complexity of the scheme of the evaluation
 of $\tV(q)$ is of the order $O((\ln(1/\eps))^{2m})$, where $m$ is the number of repeated integrations. 
 
 As in \cite{EfficientLevyExtremum}, we consider two types of the Laplace inversion: the Gaver-Wynn Rho acceleration and 
 the sinh-acceleration applied to the Bromwich integral. The latter can be applied in the index $\al>1$ or $\al=1$ in the symmetric case or $\al<1$ and zero drift. Using the sinh-acceleration, it is possible to achieve the precision of the order E-15 in a fraction of a second if
 $V$ is represented as a double integral, and in several seconds if $V$ is represented as a triple integral; the precision of the order of $E-10$ can be achieved in dozens of milliseconds and a fraction of a second, respectively. The scheme based on the Gaver-Wynn Rho acceleration is not so accurate but simpler to implement because for $q>0$, the deformations of the contours of deformations
 enjoy better properties, and the parameters of the deformations are much easier to choose. The reason is the following useful property
 established in \cite{EfficientAmenable} for a wide class of L\'evy processes called Stieltjes-L\'evy processes (SL processes). Namely,
 if $X$ is a SL process, then $q+\psi(\xi)\neq 0$ for all $\xi\not\in i\bR$. It is proved in  \cite{EfficientAmenable} that essentially all popular classes of L\'evy processes, stable ones including, are SL processes.

 The numerical examples in the paper are provided for the cpdf of the supremum process and joint cpdf of the stable L\'evy process and supremum process. As it is well-known (see, e.g., \cite{ConfAccelerationStable} for an extensive list of examples), for certain regions in the parameter space (relatively small ones), the evaluation even of cpdf of the stable L\'evy process is very difficult and requires additional tricks. Similar difficulties are expected in the cases considered in the paper. However, as numerical examples in the paper demonstrate,
 if the set of parameters is not too close to the bad region in the parameter space, the method of the paper works well.

\appendix 

\section{Technicalities}\label{s:techn}
 \subsection{Proof of Lemma \ref{asympWHF}}\label{proof:lem_asympWHF}
   We prove the statements for $\phipq(\xi)$; the proofs for $\phimq(\xi)$ are by symmetry.
 (1) Fix a sufficiently small $\eps>0$ and separate the contour of integration in \eq{phip1} into two parts: 
 $\cL^-_{\om,\eps}=\{\eta\in \cL^-_{\om}\ |\ |\eta|\le \eps\}$ and $\cL^-_\om\setminus \cL^-_{\om,\eps}$. 
 Evidently, the integral over the latter contour is $O(|\xi|)$ as $\xi\to 0$, and the integrand over the former admits the upper bound via
 $Cg(\xi,\eta)$, where $g(\xi,\eta)=|\xi||\eta|^{\al-1}/(|\xi|+|\eta|)$.  We have
 \[
 \int_0^\eps \frac{|\xi||\eta|^{\al-1}}{|\xi|+|\eta|}d|\eta|=|\xi|^{\al}\int_0^{\eps/|\xi|}\frac{|\eta|^{\al-1}}{|\eta|+1}d|\eta|.
 \]
 The last integral is uniformly bounded as $|\xi|\to 0$ if $\al\in (0,1)$; if $\al\in (1,2)$, the integral has the asymptotics
 $(\al-1)^{-1}(\eps/|\xi|)^{\al-1}$, which finishes the proof of \eq{asphim0}.
 
 (2) Define $\phi^{+,1}_q(\xi)=(1-i\xi(q/|\Cp|)^{-1/\al})^{\alp}\phipq(\xi)$, $\phi^{-,1}_q(\xi)=(1+i\xi(q/|\Cp|)^{-1/\al})^{\alm}\phimq(\xi)$,
 \[
 \Phi(q,\eta)=\ln(1+\psi_{st}(\eta)/q)-\alp\ln(1-i\eta/q)-\alm\ln(1+i\eta/q).
 \]
 Using the residue theorem, it is straightforward to prove that
 \beqa\label{phip11}
\phi^{+,1}_q(\xi)&=&\exp\left[\frac{1}{2\pi i}\int_{\cL^-_\om}\frac{\xi \Phi(q,\eta)}{\eta(\xi-\eta)}d\eta\right]\\
\label{phim11}
\phi^{-,1}_q(\xi)&=&\exp\left[-\frac{1}{2\pi i}\int_{\cL^+_\om}\frac{\xi \Phi(q,\eta)}{\eta(\xi-\eta)}d\eta\right].
\eqa
It follows from \eq{aspsimain} that, as $\eta\to\infty$ along $\cL^-_\om$ from 0 to the right, 
\[
\Phi(q,\eta)=i(\varphi_0+\al(-\om)-\alp(-\pi/2-\om)-\alm(\pi/2-\om))+O(|\eta|^{-\de}),
\]
where $\de>0$. Since $\alm+\alp=\al$ and $\varphi_0=(\pi/2)(\alm-\alp)$, we have $\Phi(q,\eta)=O(|\eta|^{-\de})$.
As $\eta\to 0$ along $\cL^-_\om$, $\Phi(q,\eta)=O(|\eta|^{\de_1})$, where $\de_1>0$. 
Therefore, the integrand on the RHS of \eq{phip11} admits an upper bound via $Cg(|\xi|, |\eta|)$, where
\[
g(|\xi|,|\eta|)=\frac{|\xi|\min\{|\eta|^{\de_1}, |\eta|^{-\de}\}}{|\eta|(|\xi|+|\eta|)}.
\]
Evidently, there exists $C>0$ such that, for any $|\xi|$,  
$
\int_0^{+\infty}g(|\xi|,|\eta|)d|\eta|\le C,
$
hence, $\phi^{+,1}_q(\xi)$ is uniformly bounded in $\xi\in \cC_\ga$. This proves (2) for $\phipq(\xi)$; the proof
for $\phimq(\xi)$ is by symmetry.


(3a) Represent $\phipq(\xi)$ in the form $(q+\psi^0_{st}(\xi))^{-1}\phi^{+,1}_q(\xi)$, and set
$\phi^{-,1}_q(\xi)=\phimq(\xi)$, $\Phi(q,\eta)=\ln(1-i\mu\eta/(q+\psi^0_{st}(\eta))$. The same argument as in the proof of Lemma \ref{lem:stableWHF} gives
\eq{phip11} and \eq{phim11}, and the argument in the proof of (2) proves that $\phi^{\pm,1}_q(\xi)$ are uniformly bounded on the coni of interest.
Hence, \eq{asphipqp12} holds. To prove \eq{asphimqp12}, we substitute $\xi/(\eta(\xi-\eta))=1/\eta+1/(\xi-\eta)$ in
\eq{phim11}, represent the integral as the sum of two integrals, and, similarly to the proof of (3), prove that the exponential of the second integral is $O(|\xi|^{-\de})$, for some $\de>0$.

(3b) The proof is by symmetry.

(3c)-(3d) The proof is an evident modification of the proofs of (3b)-(3c).

(3e) Represent $\phipq(\xi)$ in the form $(1+i(\be\sg/q)\xi\ln\xi)^{-1}\phi^{+,1}_q(\xi)$, and set
$\phi^{-,1}_q(\xi)=\phimq(\xi)$, $\Phi(q,\eta)=\ln(1+\psi_{st}(\eta)/(1+i(\be\sg/q)\eta\ln\eta))$.
We have $\Phi(q,\eta)=1+O(1/\ln(|\eta|))$ as $\eta\to\infty$ along $\cL^-_\om$. Therefore, 
the same argument as in the proof of Lemma \ref{lem:stableWHF} gives \eq{phip11}. The integrand
on the RHS of \eq{phip11} is uniformly integrable over $\cL^-_{\om,0}=\{\eta\in \cL^-_\om \ |\ |\eta|\le 2\}$, and admits a bound via
$
g(|\xi|,|\eta|)=|\xi|(|\eta|(|\xi|+|\eta|)\ln|\eta|)^{-1}$ on $\cL^-_\om\setminus \cL^-_{\om,0}$. We have
\[
\int_2^{|\xi|}\frac{|\xi|d|\eta|}{|\eta|(|\xi|+|\eta|)\ln|\eta|}\le C\int_2^{|\xi|}|\eta|^{-1}\frac{d|\eta|}{\ln|\eta|}\le C_1\ln\ln|\xi|,
\]
and
\[
\int_{|\xi|}^\infty\frac{|\xi|d|\eta|}{|\eta|(|\xi|+|\eta|)\ln|\eta|}\le C_2|\xi|\int_{|\xi|}^{\infty}\frac{d|\eta|}{|\eta|^{2}\ln|\eta|}\le C_3,
\]
where $C,C_1,C_2,C_3$ are independent of $\xi$, therefore,
$\ln\phi^{+,1}_q(\xi)=O(\ln\ln|\xi|)$ as $\xi\to\infty$. We conclude that for any $\eps>0$, there exist $c,C>0$ such that
\[
c|\xi|^{1-\eps}\le |\phipq(\xi)\le C|\xi|^{1+\eps}
\]
if $|\xi$ is sufficiently large. The upper bound implies \eq{asphipq1p}, and the lower bound implies
\eq{asphimq1p} because $\psi_{st}(\xi)\sim i\sg\be |\xi|\ln|\xi|$ as $\xi\to \infty$.
 
 (3f) The proof is by symmetry.
 

 \subsection{Gaver-Wynn Rho algorithm}\label{GavWynn} 
  The inverse Laplace transform $V(T)$  of $\tilde V$  is approximated by 
\begin{equation}\label{GS31}
V(T, M)=\frac{\ln(2)}{t}\sum_{k=1}^{2M}\zeta_k \tV\left(\frac{k\ln(2)}{T}\right),
\end{equation}
where $M\in \bN$,
\begin{equation}\label{GS32}
\zeta_k(t, M)=(-1)^{M+k}\sum_{j=\lfloor (k+1)/2\rfloor}^{\min\{k,M\}}\frac{j^{M+1}}{M!}\left(\begin{array}{c} M \\ j\end{array}\right)
\left(\begin{array}{c} 2j \\ j\end{array}\right)\left(\begin{array}{c} j \\ k-j\end{array}\right)
\end{equation}
and $\lfloor a \rfloor$ denotes the largest integer that is less than or equal to  $a$.
If $T$ is large which in applications to option pricing means options of long maturities, then $q=k\ln(2)/T$ is small. In the present paper, efficient calculations of $\tV(f;q,x_1,x_2)$ are possible if $q\ge \sg$, where $\sg>0$ is determined by the parameters of the process and payoff function. Hence, if $T$ is large, we modify \eq{tVBrom}
\bbe\label{tVBrom_a}
V(f;T;x_1,x_2)=\frac{e^{aT}}{2\pi i}\int_{\Re q=\sg}e^{qT}\tV(f;q+a;x_1,x_2)\,dq,
\ee
where $a>0$ is chosen 
so that $\ln(2)/T+a>\max\{-\psi(i\mumpr),-\psi(i\muppr)\}$.
 In the paper, as in \cite{paired,Contrarian}, we apply Gaver-Wynn-Rho (GWR) algorithm, which is
  more stable than the Gaver-Stehfest method.
  
  Given a converging sequence $\{f_1, f_2,
\ldots\}$, Wynn's algorithm estimates the limit $f=\lim_{n\to\infty}f_n$ via $\rho^1_{N-1}$, where $N$ is even,
and $\rho^j_k$, $k=-1,0,1,\ldots, N$, $j=1,2,\ldots, N-k+1$, are calculated recursively as follows:
\begin{enumerate}[(i)]
\item
$\rho^j_{-1}=0,\ 1\le j\le N;$
\item
$\rho^j_{0}=f_j,\ 1\le j\le N;$
\item
in the double cycle w.r.t. $k=1,2,\ldots,N$, $j=1,2,\ldots, N-k+1$, calculate
\[
\rho^j_{k}=\rho^{j+1}_{k-2}+k/(\rho^{j+1}_{k-1}-\rho^{j}_{k-1}).
\]
We apply Wynn's algorithm with the Gaver functionals
\[
f_j(T)=\frac{j\ln 2}{T}\left(\frac{2j}{j}\right)\sum_{\ell=0}^j (-1)^j\left(\frac{j}{\ell}\right)\tilde f((j+\ell)\ln 2/T).
\]
 \end{enumerate}
 
 \section{Tables}\label{s:tables} 
\begin{table}
\caption{\small Cpdf  $V(T,a)=\bQ[\barX_T\le a\ |\ X_0=\barX_0=0]$, errors (rounded)
and CPU time. Parameters:  index $\al=1.2$, $a\in [0.0125, 0.075]$ are close to 0, the asymmetry is moderate: $\be=-0.2$, 
 $\mu=-0.02$; $T=0.25$.
 }
 {\tiny
\begin{tabular}{c|cccccc}\hline
 $a$ &  0.0125 &	0.025 &	0.0375 &	0.05 & 	0.0625 &	0.075 \\
 $V$ & 0.13205969881037
  & 0.238098430142687 & 0.339453622131327 & 0.435754264935413 &0.524541403377567 &
0.603375861525033
\\\hline
$A$ & 1.94E-11 &	4.22E-11 &	4.37E-11 &	4.50E-11 &	4.59E-11 &	4.64E-11\\
 $B$ & 1.06E-07 &	1.06E-07 &	1.07E-07 &	1.06E-07	& 1.06E-07	&1.06E-07\\ 
$C$ & 2.59E-06 &	2.27E-05	& 2.78E-05 &	-1.51E-05	& -3.42E-05 &	-1.88E-05
\\\hline
\end{tabular}
}

\begin{flushleft}{\tiny CPU time for 6 points, average over 1000 runs.\\
 Benchmark: SINH applied to the Bromwich integral, $N_\ell=414,$
$N^+_\pm=1106$, $N^-_\pm=826$. Errors $<E-15$. CPU time  362 msec. \\
A: SINH applied to the Bromwich integral. $N_\ell=174,$
$N^-_\pm=191$, $N^+_\pm=158$. CPU time 32.4 msec.
\\
B: SINH applied to the Bromwich integral. $N_\ell=125,$
$N^-_\pm=90$, $N^+_\pm=81$. CPU time 13.8 msec.\\
 C: GWR applied to the Bromwich integral, with $2M=16$, $N^-_\pm=1210$, $N^+_\pm=469$. CPU time 42.6 msec.\\
  NB: with much finer and longer grids, the errors of GWR are of the same order of magnitude.\\}
\end{flushleft}

\label{barX,al=1.2_mu=-0.02}
\end{table}

\begin{table}
\caption{\small Cpdf  $V(T,a)=\bQ[\barX_T\le a\ |\ X_0=\barX_0=0]$, errors (rounded)
and CPU time. Parameters:  index $\al=0.2$ and $a\in [0.0125, 0.075]$ are close to 0, the asymmetry is moderate: $\be=-0.2$, 
 $\mu=0$; $T=0.25$.
 }
 {\tiny
\begin{tabular}{c|cccccc}\hline
 $a$ &  0.0125 &	0.025 &	0.0375 &	0.05 & 	0.0625 &	0.075 \\
 $V$ & 0.86237781819448 & 0.878170612170767 & 0.88666389300912 & 0.89237116393858 & 0.896621627560969 &
0.899982988799815
\\\hline
$A$ & 1.30E-12 &	6.40E-13 &	5.56E-13 &	3.46E-13	& 3.47 E-13 &	3.270E-13\\
 $B$ & 1.47E-09 &	1.93E-09 &	-1.95E-09	& 7.51E-10 &	-2.01E-09 & 	-1.50E-09\\
$C$ & -1.27E-08 &	-8.57E-11 &	1.27E-08 &	2.23E-09 &	9.61E-09	& 5.13412E-10\\\hline
\end{tabular}
}

\begin{flushleft}{\tiny CPU time for 6 points, average over 1000 runs.\\
 Benchmark: SINH applied to the Bromwich integral. $N_\ell=185,$
$N^+_\pm=3910$, $N^-_\pm=705$. Errors $<E-15$. CPU time  969 msec. \\
A: SINH applied to the Bromwich integral. $N_\ell=125,$
$N^-_\pm=751$, $N^+_\pm=158$. CPU time 86.1 msec.
\\
B: SINH applied to the Bromwich integral.  $N_\ell=79,$
$N^-_\pm=718$, $N^+_\pm=135$. CPU time 51.2 msec.\\
 C: GWR applied to the Bromwich integral, with $2M=16$, $N^-_\pm=1210$, $N^+_\pm=469$. CPU time 42.6 msec.\\
 NB: with much finer and longer grids, errors of GWR are of the same order of magnitude.\\}
\end{flushleft}

\label{barX,al=0.2_mu=0}
\end{table}

\begin{table}
\caption{\small Cpdf  $V_\mu(T,a)=\bQ[\barX_T\le a\ |\ X_0=\barX_0=0]$, errors (rounded)
and CPU time. Parameters:  $\mu=\pm 0.02$, index $\al=0.2$ and $a\in [0.0125, 0.075]$ are close to 0, the asymmetry is moderate: $\be=-0.2$; 
 $T=0.25$.
 }
 {\tiny
\begin{tabular}{c|cccccc}\hline
 $a$ &  0.0125 &	0.025 &	0.0375 &	0.05 & 	0.0625 &	0.075 \\
 $V_{-0.02}$ & 0.86655235942346 &  0.880193992524779 & 0.887966922215758 &  0.893319578600464
 & 0.897361066299498 & 0.900585519405344
\\\hline
$V_{0.02}$ & 0.383040817220745 & 0.870818882637881 & 0.882207367996254 & 0.889280791200528 &
0.894277943437897 & 0.898108725916215
\\\hline
\end{tabular}
}

\begin{flushleft}{\tiny CPU time for 6 points, average over 1000 runs.
Since $\al<1$ and $\mu\neq 0$, SINH is not applicable. \\
Differences between the results obtain with GWR ($2M=16$) and different arrays are in the range
$[7\cdot 10^{-10}, 4\cdot 10^{-9}]$ for $V_{-0.02}$, and $[5\cdot 10^{-10}, 10^{-6}]$ for $V_{0.02}$.\\
$V_{-0.02}$ is calculated with $2M=16$, $N^-_\pm=827$, $N^+_\pm=311$; CPU time 64.1 msec.\\
$V_{0.02}$ is calculated with $2M=16$, $N^-_\pm=1983$, $N^+_\pm=518$; CPU time 75.5 msec.}
\end{flushleft}

\label{barX,al=0.2_mu=pm 0.02}
\end{table}

\begin{table}
\caption{\small Cpdf  $V_\mu(T,a)=\bQ[\barX_T\le a\ |\ X_0=\barX_0=0]$, errors (rounded)
and CPU time. Parameters:  $\mu=\pm 0.02$, index $\al=0.2$ and $a\in [0.0125, 0.075]$ are close to 0, the asymmetry is moderate: $\be=-0.2$;
 $T=0.004$ is small.
 }
 {\tiny
\begin{tabular}{c|cccccc}\hline
 $a$ &  0.0125 &	0.025 &	0.0375 &	0.05 & 	0.0625 &	0.075 \\
 $V_{-0.02}$ & 0.997491210718175 & 0.997814652512808 & 0.997984358113175 & 0.998096768145159
 & 0.998179648593157 & 0.998244693044529
\\\hline
$V_{0.02}$ & 0.997491211642771 & 0.99781465327481 & 0.997984358783901 & 0.99809676872475 &
0.998179649128261 & 0.998244693506221
\\\hline
\end{tabular}
}

\begin{flushleft}{\tiny CPU time for 6 points, average over 1000 runs.
Since $\al<1$ and $\mu\neq 0$, SINH is not applicable. \\
Differences between the results obtain with GWR ($2M=16$) and different arrays are in the range
$[4\cdot 10^{-10}, 7\cdot 10^{-10}]$ for $V_{-0.02}$, and $[3\cdot 10^{-9}, 2\cdot 10^{-8}]$ for $V_{0.02}$.\\
$V_{-0.02}$ is calculated with $2M=16$, $N^-_\pm=1546$, $N^+_\pm=683$; CPU time 62.5 msec.\\
$V_{0.02}$ is calculated with $2M=16$, $N^-_\pm=1546$, $N^+_\pm=683$; CPU time 63.3 msec.}
\end{flushleft}

\label{barX,al=0.2_mu=pm 0.02,T=0.004}
\end{table}

\begin{table}
\caption{\small Cpdf  $V_\mu(T,a)=\bQ[\barX_T\le a\ |\ X_0=\barX_0=0]$, errors (rounded)
and CPU time. Far in the tail: $a\in [100,600]$, $T=1$ is moderately large.
Other 
parameters:  $\mu=\pm 0.02$, index $\al=0.2$ is close to 0, the asymmetry is moderate: $\be=-0.2$. 
 }
 {\tiny
\begin{tabular}{c|cccccc}\hline
 $a$ & 100 &	200 &	300 &	400 & 	500 &	600 \\
 $V_{-0.02}$ & 0.90467837424503 & 0.916070201834554 & 0.922144074014908 &
 0.926205338569077 & 0.929219523163341 & 0.931596967461542 
\\\hline
$V_{0.02}$ & 0.904671590132716 & 0.916067179091704 & 0.922142699482402 &
0.926203995172241 &  0.929218489731696 & 0.931596132973088
\\\hline
\end{tabular}
}

\begin{flushleft}{\tiny CPU time for 6 points, average over 1000 runs.
Since $\al<1$ and $\mu\neq 0$, SINH is not applicable. \\
Differences between the results obtain with GWR ($2M=16$) and different arrays are in the range
$[2\cdot 10^{-11}, 2\cdot 10^{-10}]$ for $V_{-0.02}$, and $[7\cdot 10^{-11}, 9\cdot 10^{-10}]$ for $V_{0.02}$.\\
$V_{-0.02}$ is calculated with $2M=16$, $N^-_\pm=1579$, $N^+_\pm=428$; CPU time 79.1 msec.\\
$V_{0.02}$ is calculated with $2M=16$, $N^-_\pm=1865$, $N^+_\pm=466$; CPU time 128.0 msec.}
\end{flushleft}

\label{barX,al=0.2_mu=pm 0.02,T=1}
\end{table}

\begin{table}
\caption{\small Cpdf  $V(T,a)=\bQ[\barX_T\le a\ |\ X_0=\barX_0=0]$, errors (rounded)
and CPU time. Parameters:  small index $\al=0.2$, time to maturity $T=10$ is large,  $a\in [0.5, 3.0]$ are moderately large, the asymmetry is moderate: $\be=-0.2$, 
$\mu=0$; $T=0.25$.
 }
 {\tiny
\begin{tabular}{c|cccccc}\hline
 $a$ &  0.5 &	1.0 &	1.5 &	 2.0 & 	2.5 &3.0 \\
 $V$ & 0.31071084287788 & 0.32976167923584 & 0.341569702301114 &
 0.3502636252052 & 0.357194432515148 & 0.362981755403084
 \\\hline
$A$ & 1.49E-10 &	1.22E-10 &	1.22E-10 &	1.20E-10 &	9.97E-11 &	9.68E-11
\\
 $B$ & 1.49E-08 &	3.45E-08 &	-4.20E-08 &	-1.90E-08	& -4.30E-08 &	-1.28E-08\\
$C$ & -1.27E-08 &	-8.57E-11 &	1.27E-08 &	2.23E-09 &	9.61E-09	& 5.13412E-10\\\hline
\end{tabular}
}

\begin{flushleft}{\tiny CPU time for 6 points, average over 1000 runs.\\
 Benchmark: SINH applied to the Bromwich integral. $N_\ell=310,$
$N^+_\pm=6301$, $N^-_\pm=1376$. Errors $<E-15$. CPU time  969 msec. \\
A: SINH applied to the Bromwich integral. $N_\ell=125,$
$N^-_\pm=1074$, $N^+_\pm=177$. CPU time 105.7 msec.
\\
B: SINH applied to the Bromwich integral. $N_\ell=79,$
$N^-_\pm=455$, $N^+_\pm=69$. CPU time 36.5 msec.\\
 C: GWR applied to the Bromwich integral, with $2M=16$, $N^-_\pm=2021$, $N^+_\pm=435$. CPU time 125.9 msec.\\
 NB: with much finer and longer grids, the errors of GWR are of the same order of magnitude.\\}
\end{flushleft}

\label{barX,al=0.2_mu=0,T=10}
\end{table}

 \begin{table}
\caption{\small Joint CPDF. Values  of $V_1(T,a_1,a_2)=\bQ[X_T\le a_1\ |\ X_0=0]-\bQ[X_T\le a_1, \barX_T\le a_2\ |\ X_0=\barX_0=0]
 $, 
 errors (rounded) of two numerical schemes
and CPU time. Parameters:  index $\al=1.2$, $a_2\in [0.0125, 0.075]$ and $a_{12}:=a_1-a_2\in [-0.075,-0.0125]$ are close to 0, the asymmetry is moderate: $\be=-0.2$, 
 $\mu=-0.02$; $T=0.25$.
 }
 {\tiny
\begin{tabular}{c|cccccc}
\hline\hline
$a_2/a_{12}$ & -0.075 &  -0.05 & -0.0375 & -0.025 & -0.0125 \\\hline
0.0125 & 0.12244233311163  & 0.157907371799232 & 0.181683980001225
& 0.210889139495737 & 0.246912884013257 \\
0.025 & 0.0918916219424353  & 0.120389641188601 & 0.140202136800179 &
0.165401766901898 & 0.197967194459645\\
0.0375 & 0.0692609560212789  & 0.0918389965351731 & 0.108011387880229 &
0.129210336122806 & 0.157781151351146\\
0.05 & 0.0521567547065453  & 0.06974459359541 & 0.0826439620684098 &
0.099983838478999 & 0.124230282077777\\
0.0625 & 0.0393164899603866  & 0.0528386851043925 & 0.0629314657230151
& 0.0767702363595711 & 0.0967218956392104\\
0.075 & 0.0297971711410534  & 0.0401248577840783 & 0.0479246215441564
& 0.0587741609274394 & 0.0747900267865678
\\\hline
A &  & & Errors of SINH & &\\\hline
$a_2/a_{12}$ & -0.075 & - -0.05 & -0.0375 & -0.025 & -0.0175 \\\hline
0.0125 & -3.00-09 &	 -3.14E-09 &-3.22E-09 &	-3.31E-09 &	-3.33E-09\\
0.025 & -2.54E-09 &	 -2.68E-09 &	-2.78E-09	& -2.90E-09 &	-3.05E-09\\
0.0375 & -2.14E-09 &		-2.26E-09	& -2.35E-09 &	-2.46E-09 &	-2.61E-09\\
0.05 & -1.79E-09 &		-1.88E-09	&-1.96E-09 &	-2.06E-09 &	-2.21E-09\\
0.0625 & -1.48E-09 &	 	-1.55E-09 & 	-1.61E-09	& -1.70E-09 &	-1.84E-09\\
0.075 & -1.21E-09 &	-1.27E-09 &	-1.32E-09 &	-1.39E-09 &	-1.51E-09\\\hline

B &  & & Errors of GWR & &\\\hline
$a_2/a_{12}$ & -0.075 &  -0.05 & -0.0375 & -0.025 & -0.0175 \\\hline
0.0125 & 2.92E-08 &		5.84E-07 &	8.40E-07	& -5.81E-07 &	-2.76E-06\\
0.025 & 2.06E-07 &		8.82E-07 &	2.08E-06 &	2.56E-06 &	-2.60E-06\\
0.0375 & 2.65E-06 & 		2.50E-06 &	3.52E-06 &	6.11E-06 &	8.60E-06\\
0.05 & -2.27E-06 &	 	3.92E-05	& 7.80E-06 &	6.60E-06	& 9.70E-06\\
0.0625 & -8.35E-07 &	 -2.71E-06 &	-7.44E-06	& 4.58E-04 &1.06E-05\\
0.075 & 1.66E-07 &		-6.19E-08 &	-9.73E-07 &	-3.81E-06	&-1.85E-05\\\hline

\end{tabular}

}
\begin{flushleft}{\tiny
Errors of the benchmark values: better than E-15. CPU time for one point, average over 1000 runs: 1,982 msec.\\

A: SINH applied to the Bromwich integral. $N_\ell=193, N^-_\pm=144, N^+_\pm=147$. CPU time  for one point, average over 1000 runs: 421 msec.\\
B:  Gaver-Wynn
Rho algorithm, $2M=16$, $N^-_\pm=516, N^+_\pm=677$. CPU time  for one point, average over 1000 runs:  194 msec.\\}
\end{flushleft}

\label{joint_al=1.2,mu=0,T=0.25}
 \end{table}

\begin{table}
\caption{\small Joint CPDF. Values  of $V_1(T,a_1,a_2)=\bQ[X_T\le a_1\ |\ X_0=0]-\bQ[X_T\le a_1, \barX_T\le a_2\ |\ X_0=\barX_0=0]
 $, 
 errors (rounded) of two numerical schemes
and CPU time. Parameters:  index $\al=0.2$, $a_2\in [0.0125, 0.075]$ and $a_{12}:=a_1-a_2\in [-0.075,-0.015]$ are close to 0, the asymmetry is moderate: $\be=-0.2$, 
$\mu=0$; $T=0.25$.
 }
 {\tiny
\begin{tabular}{c|cccccc}
\hline\hline
$a_2/a_{12}$ & -0.075 &  -0.05 & -0.0375 & -0.025 & -0.0125 \\\hline
0.0125 & 0.00682358422697427 & 0.00705249804952593 & 0.00720455195463735
& 0.00740278409139612 & 0.00769492446912139\\
0.025 & 0.00558447991435967 & 0.00574082324058917 & 0.00584237670326238 &
0.00597177710600839 & 0.00615563177428208\\
0.0375 & 0.00494072158394288 & 0.00506309108194094 & 0.0051414907517235 & 
0.00524002286520389 & 0.00537705594316782\\
0.05 & 0.00451892651994965 & 0.00462072587777185& 0.00468530333674396 &
0.00476567446164324 & 0.00487579384718796 \\
0.0625 & 0.0042111823310130 & 0.004298919274522 & 0.00435414844090055 &
0.00442236983151984 & 0.00451478766166025\\
0.075 & 0.0039720388807442 &0.00404944263994188 & 0.00409786185433674 &
0.00415730883426692 & 0.00423711210468661
\\\hline
A &  & & Errors of SINH & &\\\hline
$a_2/a_{12}$ & -0.075 & - -0.05 & -0.0375 & -0.025 & -0.0175 \\\hline
0.0125 & -4.02E-09 &		-4.24E-09 &	-4.40E-09 &	-4.65E-09	 & -5.10E-09\\

0.025 & -2.42E-09 &		-2.52E-09 &	-2.60E-09	 &-2.71E-09 &	-2.93E-09\\

0.0375 & -1.85E-09	& 	-1.912E-09 &	-1.97E-09	 &-2.04E-09 &	-2.18E-09\\

0.05 & -1.49E-09 &	 	-1.53E-09	& -1.56E-09 &	-1.612E-09 &	-1.70E-09\\

0.0625 & -1.29E-09 &		-1.32E-09 &	-1.34E-09 &	-1.37E-09	& -1.43E-09\\

0.075 & -1.21E-09 &		-1.24E-09 &	-1.26E-09	&-1.29E-09 &	-1.34E-09\\\hline

B &  & & Errors of GWR & &\\\hline
$a_2/a_{12}$ & -0.075 &  -0.05 & -0.0375 & -0.025 & -0.0175 \\\hline
0.0125 & -1.78E-08&	-2.21E-08	&-1.49E-08 &	-7.99E-09 &	-3.21E-08\\

0.025 & -1.34E-08&		-2.52E-08	& -2.32E-08 &	-2.51E-08	& -1.51E-08\\

0.0375 & -1.90E-08	&	-1.36E-08	& -2.31E-08 &	-2.92E-08	& -1.93E-08\\

0.05 & -1.09E-08 &	 -1.04E-08 &	-1.39E-08	& -1.83E-08 &	-1.14E-08\\

0.0625 & -1.41E-08 &	-1.03E-08	& -5.86E-09 &	-1.18E-08	& -5.90E-09\\

 0.075 &-1.46E-08&	-8.92E-09 &	-1.37E-08 &	-1.39E-08	& -1.22E-08 \\\hline

\end{tabular}

}
\begin{flushleft}{\tiny
Errors of the benchmark values: better than E-15. CPU time for one point, average over 1000 runs: 8.223 msec.\\

A: SINH applied to the Bromwich integral. $N_\ell=86, N^-_\pm=373, N^+_\pm=65$. CPU time  for one point, average over 1000 runs: 254 msec.\\
B:  Gaver-Wynn
Rho algorithm, $2M=16$, $N^-_\pm=3028, N^+_\pm=632$. CPU time  for one point, average over 1000 runs:  987 msec.\\}
\end{flushleft}

\label{joint_al=0.2,mu=0,T=0.25}
 \end{table}


\begin{thebibliography}{10}

\bibitem{AmentONeil18}
S.~Ament and M.~O'Neil.
\newblock Accurate and efficient numerical calculation of stable densities via
  optimized quadrature and asymptotics.
\newblock {\em Statistics and Computing}, 28(1):171--185, 2018.

\bibitem{single}
M.~Boyarchenko and S.~Levendorski\u{i}.
\newblock Prices and sensitivities of barrier and first-touch digital options
  in {L}\'evy-driven models.
\newblock {\em International Journal of Theoretical and Applied Finance},
  12(8):1125--1170, December 2009.

\bibitem{KoBoL}
S.~Boyarchenko and S.~Levendorski\u{i}.
\newblock Option pricing for truncated {L}\'evy processes.
\newblock {\em International Journal of Theoretical and Applied Finance},
  3(3):549--552, July 2000.

\bibitem{barrier-RLPE}
S.~Boyarchenko and S.~Levendorski\u{i}.
\newblock Barrier options and touch-and-out options under regular {L}\'evy
  processes of exponential type.
\newblock {\em Annals of Applied Probability}, 12(4):1261--1298, 2002.

\bibitem{NG-MBS}
S.~Boyarchenko and S.~Levendorski\u{i}.
\newblock {\em Non-{G}aussian {M}erton-{B}lack-{S}choles {T}heory}, volume~9 of
  {\em Adv. Ser. Stat. Sci. Appl. Probab.}
\newblock World Scientific Publishing Co., River Edge, NJ, 2002.

\bibitem{BLSIAM02}
S.~Boyarchenko and S.~Levendorski\u{i}.
\newblock Perpetual {A}merican options under {L}\'evy processes.
\newblock {\em SIAM Journal on Control and Optimization}, 40(6):1663--1696,
  2002.

\bibitem{EPV}
S.~Boyarchenko and S.~Levendorski\u{i}.
\newblock American options: the {E}{P}{V} pricing model.
\newblock {\em Annals of Finance}, 1:267--292, 2005.

\bibitem{IDUU}
S.~Boyarchenko and S.~Levendorski\u{i}.
\newblock {\em Irreversible {D}ecisions {U}nder {U}ncertainty ({O}ptimal
  {S}topping {M}ade {E}asy)}.
\newblock Springer, Berlin, 2007.

\bibitem{SINHregular}
S.~Boyarchenko and S.~Levendorski\u{i}.
\newblock Sinh-acceleration: Efficient evaluation of probability distributions,
  option pricing, and {M}onte-{C}arlo simulations.
\newblock {\em International Journal of Theoretical and Applied Finance},
  22(3), 2019.
\newblock DOI: 10.1142/S0219024919500110. Available at SSRN:
  https://ssrn.com/abstract=3129881 or http://dx.doi.org/10.2139/ssrn.3129881.

\bibitem{ConfAccelerationStable}
S.~Boyarchenko and S.~Levendorski\u{i}.
\newblock Conformal accelerations method and efficient evaluation of stable
  distributions.
\newblock {\em Acta Applicandae Mathematicae}, 169:711--765, 2020.
\newblock Available at SSRN: https://ssrn.com/abstract=3206696 or
  http://dx.doi.org/10.2139/ssrn.3206696.

\bibitem{Contrarian}
S.~Boyarchenko and S.~Levendorski\u{i}.
\newblock Static and semi-static hedging as contrarian or conformist bets.
\newblock {\em Mathematical Finance}, 3(30):921--960, 2020.
\newblock Available at SSRN: https://ssrn.com/abstract=3329694 or
  http://arxiv.org/abs/1902.02854.

\bibitem{EfficientLevyExtremum}
S.~Boyarchenko and S.~Levendorski\u{i}.
\newblock Efficient evaluation of expectations of functions of a {L}\'evy
  process and its extremum.
\newblock Working paper, June 2022.
\newblock Available at SSRN: https://ssrn.com/abstract=4140462 or
  http://arxiv.org/abs/4362928.

\bibitem{EfficientAmenable}
S.~Boyarchenko and S.~Levendorski\u{i}.
\newblock L\'evy models amenable to efficient calculations.
\newblock Working paper, June 2022.
\newblock Available at SSRN: https://ssrn.com/abstract=4116959 or
  http://arxiv.org/abs/4339862.

\bibitem{Chandrasekhar}
S.~Chandrasekhar.
\newblock Stochastic problems in physics and astronomy.
\newblock {\em Rev. Mod. Phys.}, 15(1):1--89, 1943.

\bibitem{Chavanis09}
P.~Chavanis.
\newblock Statistics of the gravitational force in various dimensions of space:
  From gaussian to {L}\'evy laws.
\newblock {\em The European Physical Journal B}, 70(3):413--433, 2009.

\bibitem{eskin}
G.I. Eskin.
\newblock {\em Boundary {V}alue {P}roblems for {E}lliptic {P}seudodifferential
  {E}quations}, volume~9 of {\em Transl. Math. Monogr.}
\newblock American Mathematical Society, Providence, RI, 1981.

\bibitem{Fedoryuk}
M.V. Fedoryuk.
\newblock {\em Asymptotic: Integrals and Series}.
\newblock Nauka, Moscow, 1987.
\newblock In Russian.

\bibitem{SignalProc10}
D.S. Gonzalez, E.E. Kuruoglu, and D.P. Rulz.
\newblock Modelling with mixture of symmetric stable distributions using
  {G}ibbs sampling.
\newblock {\em Signal Processing}, 90(3):774--783, 2010.

\bibitem{greenwood-pitman}
P.~Greenwood and J.~Pitman.
\newblock Fluctuation identities for {L}\'evy processes and splitting at the
  maximum.
\newblock {\em Advances in Applied Probability}, 12(4):893--902, 1980.

\bibitem{paraHeston}
S.~Levendorski\u{i}.
\newblock Efficient pricing and reliable calibration in the {H}eston model.
\newblock {\em International Journal of Theoretical and Applied Finance},
  15(7), 2012.
\newblock 125050 (44 pages).

\bibitem{paired}
S.~Levendorski\u{i}.
\newblock Method of paired contours and pricing barrier options and {C}{D}{S}
  of long maturities.
\newblock {\em International Journal of Theoretical and Applied Finance},
  17(5):1--58, 2014.
\newblock 1450033 (58 pages).

\bibitem{Sinh}
S.~Levendorski\u{i}.
\newblock {F}ractional-{P}arabolic {D}eformations with {S}inh-{A}cceleration.
\newblock Working paper, April 2016.
\newblock Available at SSRN: http://ssrn.com/abstract=2758811.

\bibitem{IAC}
S.~Levendorski\u{i} and J.~Xie.
\newblock Fast pricing and calculation of sensitivities of {O}{T}{M} {E}uropean
  options under {L}\'evy processes.
\newblock {\em Journal of Computational Finance}, 15(2), 2012.
\newblock Available at SSRN:http://ssrn.com/abstract=1589809.

\bibitem{Mand1}
B.B. Mandelbrot.
\newblock The variation of certain speculative prices.
\newblock {\em Journ. of Business}, 36:394--419, 1963.

\bibitem{Mand2}
B.B. Mandelbrot.
\newblock {\em Fractals and scaling in Finance: Discontinuity, concentration,
  risk}.
\newblock Springer-Verlag, New York, 1997.

\bibitem{MitRachev}
S.~Mttnik and S.T. Rachev.
\newblock Modeling asset returns with alternative stable distributions.
\newblock {\em Econometric Reviews}, 12(3):261--330, 1993.

\bibitem{NikiasShao95}
C.~L. Nikias and M.~Shao.
\newblock {\em Signal Processing with Alpha-Stable Distributions and
  Applications}.
\newblock Wiley, New York, NY, 1995.

\bibitem{Nolan97}
J.P. Nolan.
\newblock Numerical calculation of stable densities and distribution functions.
\newblock {\em Commun. Statist.-Stochastic Models}, 13(4):753--774, 1997.

\bibitem{Nolan98}
J.P. Nolan.
\newblock Parameterizations and modes of stable distributions.
\newblock {\em Statistics \& Probability Letters}, 38(1):187--195, 1998.

\bibitem{Nolan03}
J.P. Nolan.
\newblock Modeling financial data with stable distributions.
\newblock In S.T. Rachev, editor, {\em Handbook of Heavy Tailed Distributions
  in Finance}. Elsevier/North-Holland, New York, NY, 2003.

\bibitem{RW}
L.C.G. Rogers and D.~Williams.
\newblock {\em Diffusions, {M}arkov {P}rocesses, and {M}artingales. {V}olume 1.
  {F}oundations}.
\newblock John Wiley \& Sons, Ltd., Chichester, 2nd edition, 1994.

\bibitem{Saenko16}
V.V. Saenko.
\newblock Fractional-stable statistics of the genes expression in the next
  generation sequence results.
\newblock {\em Mathematical Biology and Bioinformatics}, 11(2):278--287, 2016.

\bibitem{SamorodnitskyTaqqu94}
G.C. Samorodnitsky and M.C. Taqqu.
\newblock {\em Stable {N}on-{G}aussian {R}andom processes}.
\newblock Chapman and Hall, New York, 1994.

\bibitem{sato}
K.~Sato.
\newblock {\em L\'evy processes and infinitely divisible distributions},
  volume~68 of {\em Cambridge Stud. Adv. Math.}
\newblock Cambridge University Press, Cambridge, 1999.

\bibitem{stenger-book}
F.~Stenger.
\newblock {\em Numerical {M}ethods based on {S}inc and {A}nalytic functions}.
\newblock Springer-Verlag, New York, 1993.

\bibitem{Weron96}
R.~Weron.
\newblock On the {C}hambers-{M}allows-{S}tuck method for simulating skewed
  stable random variables.
\newblock {\em Statistics and Probability Letters}, 28(2):165--161, 1996.

\bibitem{Zolotarev}
V.M. Zolotarev.
\newblock {\em One-dimensional Stable Distributions, Amer. Math. Soc. Transl.
  of Math. Monographs, Vol. 63}.
\newblock Amer. Math. Soc., Providence, NJ, 1986.
\newblock Tranl. of the original 1983 Russian.

\end{thebibliography}
\end{document}